\newtheorem{propo}{Proposition}[section]
\newtheorem{lemma}[propo]{Lemma}
\newtheorem{corol}[propo]{Corollary}
\newtheorem{theor}[propo]{Theorem}
\newtheorem{theorem}[propo]{Theorem}
\newcommand{\Irr}{{\mathrm {Irr}}}
\newcommand{\IBR}{{\mathrm {IBr}}}
\newcommand{\soc}{{\mathrm {soc}}}
\newcommand{\Char}{{\mathrm {char}}}
\newcommand{\ZZ}{{\mathbb Z}}
\newcommand{\NN}{{\mathbb N}}
\newcommand{\SSS}{{\sf S}}
\newcommand{\AAA}{{\sf A}}
\newcommand{\FF}{{\mathbb F}}
\newcommand{\GC}{\mathcal{G}}
\newcommand{\HC}{\mathcal{H}}
\newcommand{\LC}{\mathcal{L}}
\newcommand{\SC}{\mathcal{S}}
\newcommand{\TC}{\mathcal{T}}
\newcommand{\RC}{\mathcal{R}}
\newcommand{\PC}{\mathcal{P}}
\newcommand{\QC}{\mathcal{Q}}
\newcommand{\MB}{\mathbf{M}}
\newcommand{\lam}{\lambda}
\newcommand{\al}{\alpha}
\newcommand{\gam}{\gamma}
\newcommand{\om}{\varpi}
\newcommand{\LP}{{\Lambda^{+}}}
\newcommand\SL{\mathrm{SL}}
\newcommand{\Spin}{{\mathrm{Spin}}}
\begin{document}

\title[Representation Growth]
{Representation Growth in Positive Characteristic and 
Conjugacy Classes of Maximal Subgroups}

\author{Robert M. Guralnick}
\address{Department of Mathematics, University of Southern California,
Los Angeles, CA 90089-2532, USA}
\email{guralnic@math.usc.edu}

\author{Michael Larsen}
\address{Department of Mathematics\\
    Indiana University \\
    Bloomington, IN 47405\\
    U.S.A.}
\email{larsen@math.indiana.edu}

\author{Pham Huu Tiep}
\address{Department of Mathematics\\
    University of Arizona\\
    Tucson, AZ 85721\\
    U. S. A.} 
\email{tiep@math.arizona.edu}

\thanks{The authors gratefully acknowledge the support of the NSF 
(grants DMS-0653873, DMS-1001962, DMS-0800705, and DMS-0901241).}

\maketitle

\section{Introduction}

There have been a number of recent papers on the representation growth
of various families of groups.  In particular, see \cite{LL}, 
where very good estimates were given for the zeta function counting the 
complex irreducible representations of simple Lie groups. 
See also \cite{LS} for results for 
complex representations of finite simple groups. 
One gets similar bounds for cross characteristic representations
of Chevalley groups (using lower bounds for the minimal dimension of such
representations \cite{LaSe,SZ}).
Here we focus on projective modular representations of alternating groups
and Chevalley groups in the natural characteristic.   
In the latter case, we consider only restricted representations
(by Steinberg's tensor product theorem, this is the critical case).   

The first main result of the paper is the following:

\begin{theor}\label{main1}
{\sl Let $\FF$ be an algebraically closed field of characteristic $p$.

{\rm (i)} Let $\GC$ be a simple algebraic group over $\FF$ and let $R_{n}(\GC)$ be the 
number of restricted irreducible $\FF\GC$-representations of degree at most $n$. If
$p > 2$, then $R_{n}(\GC) \leq n^{3.8}$ when $\GC$ is of type $A_r$ and 
$R_{n}(\GC) \leq n^{2.5}$ otherwise. If $p = 2$, then $R_{n}(\GC) \leq n$.

{\rm (ii)} Let $G$ be a covering group of $\SSS_r$ or $\AAA_r$ with $r \geq 5$, and let 
$R_{n}(G)$ be the number of irreducible $\FF G$-representations of degree at most $n$. Then
$R_{n}(\GC) \leq n^{2.5}$.}
\end{theor}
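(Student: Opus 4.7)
My plan is to prove parts (i) and (ii) separately, in each case parametrizing the irreducible $\FF G$-modules by a discrete combinatorial label, bounding the module's dimension from below as a function of that label, and then counting the labels whose bound lies at most $n$.

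For part (i), the restricted irreducible $\FF\GC$-modules are labelled by restricted dominant weights $\lambda = \sum_{i=1}^{r} a_i\omega_i$ with $0 \le a_i < p$. I would combine Premet's theorem (every dominant weight $\mu \le \lambda$ occurs with nonzero multiplicity in $L(\lambda)$) with Weyl-orbit size estimates to obtain a polynomial lower bound $\dim L(\lambda) \ge \Phi_{\GC}(\lambda)$, supplemented for small weights by L\"ubeck's classification of low-dimensional restricted irreducibles. Converting the count of restricted weights with $\Phi_{\GC}(\lambda) \le n$ into a lattice-point or integral estimate then yields a polynomial bound $R_n(\GC) \le n^{\alpha(\GC)}$ whose exponent depends only on the type. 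The exponent should be $3.8$ for $A_r$ because the natural module $L(\omega_1)$ has small dimension $r+1$ and its exterior powers remain comparatively small, whereas for the other classical and exceptional types the minimal nontrivial module is large enough to force the tighter $n^{2.5}$ bound. In characteristic $p = 2$ the $a_i$ are binary, yielding at most $2^r$ restricted weights, while the minimal dimension of a nontrivial such module grows at least polynomially in $r$; this ought to give the linear bound $R_n(\GC) \le n$.

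For part (ii), the irreducibles of a cover $G$ of $\SSS_r$ or $\AAA_r$ split into linear irreducibles (labelled by $p$-regular partitions of $r$ when $\Char\FF = p$) and spin irreducibles (labelled by strict partitions of $r$, with $p$-regularity imposed when relevant). The hook-length formula together with its spin analog and the modular-reduction bounds of James and Kleshchev supply polynomial lower bounds for dimension in terms of the shape of the labelling partition. Counting partitions satisfying these bounds via the asymptotic $p(r) = e^{O(\sqrt r)}$ then yields the exponent $2.5$, with the contributions of the basic spin, natural, and small standard modules pinning down the correct value.

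The main obstacle, I anticipate, is the $A_r$ case of (i): achieving the sharp exponent $3.8$ requires enumerating all families of restricted $\SL_{r+1}$-modules that are simultaneously low-dimensional and numerous as both $r$ and $p$ vary, and controlling their joint contribution to the count. The exceptional and other classical types, as well as (ii), are comparatively more rigid, so the exponent $2.5$ should fall out once the appropriate dimension lower bounds are in hand.
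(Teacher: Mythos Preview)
Your plan aligns with the paper on several points---Premet's theorem, Weyl-orbit sizes, L\"ubeck's tables, partition asymptotics---but two of your arguments have genuine gaps.

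For $p=2$ your reasoning does not give the linear bound. You note there are $2^r$ restricted weights and that the minimal nontrivial dimension grows polynomially in $r$, but for $\SL_{r+1}(\FF)$ that minimum is only $r+1$; so once $n \ge r+1$ your argument yields only $R_n \le 2^r$, exponentially worse than $R_n \le n$. The paper instead picks, among restricted $\lambda$ with $\dim L(\lambda) \le n$, one whose zero-set $J(\lambda) = \{i : a_i = 0\}$ has smallest size $m$, bounds $n$ below by $(r+1)!/(m+1)!$ via the Weyl orbit of that weight, and then checks by a direct binomial inequality that the number of restricted weights with $|J(\lambda)| \ge m$ does not exceed this same quantity. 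Types $C_r$ and $D_r$ are then reduced to $A_{r-1}$ through Smith's theorem on parabolic fixed points together with a halving lemma for $\dim V^{\QC}$.

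For part (ii), the hook-length formula and its spin analog are characteristic-zero statements; in characteristic $p$ no dimension formula is known, as the paper stresses. The paper's substitute is a new effective bound, $\dim D^{\lambda} \ge 2^{(r - m_p(\lambda))/2}$ where $m_p(\lambda)$ is the larger first part of $\lambda$ and of its Mullineux image, proved by a two-step modular branching argument using Kleshchev's rules and self-duality. James's asymptotic results, which you allude to, are explicitly described in the paper as ineffective for this purpose. As a separate remark, for the non-$A$ simple groups in odd characteristic the paper does not use Premet's theorem at all but rather an induction on rank through an extended-Dynkin subsystem subgroup $\HC_0 \times \HC_1$ with $\HC_0 \cong \SL_2$, controlling the extra coordinate by a zeta-function sum; pushing your Premet-plus-orbit approach through those types would require a case analysis comparable in effort to the full $A_r$ treatment.
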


The representation growth in the modular case is much more difficult to analyze than in 
the characteristic zero case.
For one thing, there is no known
dimension formula for modular irreducible representations  of $\SSS_r$.
For another, we do not know in general the dimensions of the
restricted irreducible  representations of
finite groups of Lie type 
(though there are a number of results in this direction). Hence, a key step in our
proof of Theorem \ref{main1} is to establish effective lower bounds on the dimension
of modular irreducible representations, see e.g. Theorem \ref{bound3} for the 
case of $\SSS_r$. 
  
While the bounds in Theorem \ref{main1}
are certainly not best possible, they are not so far off.  
For example, $\SL_2(p)$ has $d$ restricted representations of dimension up to $d$
(if $d \leq p$). For the complex simple Lie groups $\GC$, the linear bound
$R_n(\GC) \leq n$ for the number of complex irreducible representations 
has recently been proved in \cite{GLM}.  

We use Theorem \ref{main1} to obtain bounds for 
the number of conjugacy classes of maximal subgroups
for the almost simple groups with socle a finite classical group. 
If $G$ is a finite group, let $\mathcal{M}(G)$ denote the set of conjugacy classes 
of maximal subgroups of $G$, and let $m(G) =|\mathcal{M}(G)|$.   
We can then prove the following:

\begin{theor} \label{main3}  
{\sl Let $G$ be a finite almost simple group with socle a group 
of Lie type of rank $r$ defined over $\FF_q$.   
Then there are constants $a$ and $d$ such that
$m(G) < ar^6  + d r \log\log q$.}
\end{theor}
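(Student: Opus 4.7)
The plan is to apply Aschbacher's classification of maximal subgroups of finite almost simple classical groups (in its Liebeck--Seitz form), which partitions $\mathcal{M}(G)$ into the eight geometric Aschbacher classes $\mathcal{C}_1,\ldots,\mathcal{C}_8$ together with the class $\mathcal{S}$ of almost simple subgroups acting absolutely irreducibly on the natural module $V$. I will bound the contribution of each class separately and sum.

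For the geometric classes, I invoke the explicit descriptions in Kleidman--Liebeck. All geometric classes except $\mathcal{C}_5$ are parameterized by combinatorial data in $\dim V = O(r)$ (subspace dimensions for $\mathcal{C}_1,\mathcal{C}_2$; divisors of $\dim V$ for $\mathcal{C}_3,\mathcal{C}_4,\mathcal{C}_7$; and finite lists for $\mathcal{C}_6,\mathcal{C}_8$), and so contribute only polynomially in $r$. The subfield class $\mathcal{C}_5$ gives $\omega(\log_p q) = O(\log\log q)$ classes, one for each prime divisor $k$ of $\log_p q$ via a decomposition $q = q_0^k$; this is the primary source of the $\log\log q$ term in the final bound.

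For class $\mathcal{S}$, I stratify by the socle $S$ of the almost simple subgroup. Sporadic socles contribute $O(1)$. Alternating socles $S = \AAA_m$ satisfy $m = O(r)$ (since the minimum faithful projective degree of $\AAA_m$ is at least $m-2$), so Theorem \ref{main1}(ii) bounds the number of classes by $O(r^{2.5})$. Cross-characteristic Chevalley socles are controlled by the Landazuri--Seitz--Zalesski lower bound on the minimal representation dimension, which forces both the rank and the defining field of $S$ to be bounded in terms of $r$, after which a polynomial representation count finishes the case. The principal case is natural-characteristic Chevalley socles: each such $H$ arises from a simple algebraic subgroup $Y \leq \GC$ together with an $\FF_q$-rational structure and a restricted irreducible representation of $Y$ on $V$. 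Theorem \ref{main1}(i) bounds the number of such representations for each fixed $Y$ by $O(r^{3.8})$ (and by $O(r^{2.5})$ when $Y$ is not of type $A$); summing over isogeny types of $Y$ of rank $\leq r$ contributes a further polynomial-in-$r$ factor, and accounting for the $\FF_q$-rational forms of $Y$ contributes the additional $O(\log\log q)$ factor.

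The main technical obstacle is this last case. One must identify when a rational subgroup $Y(q_0) \leq \GC(q)$ is actually \emph{maximal} (rather than being contained in a larger member of $\mathcal{S}$ or a geometric class), must track $\GC(q)$-fusion of Frobenius-twisted $\GC(\bar{\FF}_p)$-conjugacy classes, and must keep careful track of polynomial factors so that the cumulative bound takes the additive form $ar^6 + dr\log\log q$ rather than a product form. Once the exponents from the various subcases are arranged so that the purely polynomial contributions are absorbed into $ar^6$ and the field-dependent contributions into $dr\log\log q$, assembling all contributions yields the asserted inequality.
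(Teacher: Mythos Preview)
Your overall architecture matches the paper's: reduce to classical groups, apply Aschbacher's theorem, bound the geometric classes directly, and handle class $\mathcal{S}$ by stratifying on the socle and invoking Theorem~\ref{main1}. However, there are two genuine gaps in your treatment of the natural-characteristic case.

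First, it is not true that every natural-characteristic irreducible embedding arises from a \emph{restricted} representation of the algebraic group $Y$, and Theorem~\ref{main1}(i) only counts restricted ones. The paper explicitly separates this into two subcases: restricted representations (its case $\SC4$) and non-restricted ones (its case $\SC3$). The non-restricted case is handled by citing the Fulman--Guralnick bound of $O(r\log r)$ from \cite{FG}; you need some such argument, since Steinberg's tensor product theorem allows the number of non-restricted irreducibles of bounded dimension to be large if not controlled separately.

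Second, and more seriously, your claim that ``accounting for the $\FF_q$-rational forms of $Y$ contributes the additional $O(\log\log q)$ factor'' cannot be made to yield the \emph{additive} shape $ar^6 + dr\log\log q$: a factor of $\log\log q$ multiplied against a polynomial of degree larger than $1$ in $r$ is not dominated by $ar^6 + dr\log\log q$ (fix $r$ large, then let $q\to\infty$). The paper's key observation here is that for a member of $\mathcal{S}$ to be \emph{maximal}, the field of definition of the representation must coincide with that of the natural module for $G$ (otherwise the subgroup lies in a subfield group, hence in $\mathcal{C}_5$), so there are at most two choices for $q'$, not $O(\log\log q)$. Thus case $\SC4$ contributes a term that is purely polynomial in $r$ (indeed $O(r^6)$: at most $9r$ choices of socle, $O(r^{3.8})$ restricted representations each by Theorem~\ref{main1}, and a further factor of $r$ for fusion in type $A$), and the $\log\log q$ term comes \emph{only} from $\mathcal{C}_5$. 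Your final paragraph acknowledges the additive-versus-multiplicative issue but does not resolve it; this field-of-definition rigidity is exactly the missing ingredient.

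A minor further point: in your alternating-socle count you quote $O(r^{2.5})$, but this is the bound for a single $\AAA_m$; you must also sum over the $O(r)$ possible values of $m$ and multiply by the fusion factor $r$, which is how the paper arrives at $O(r^{4.5})$ for that case.
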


One should be able to replace $r^6$ by $r$.   The best previous results
were $O(r^r + \log\log q) $ (see the proof of \cite[Theorem 1.3]{LMS})
and $O(p^{r/3} + \log \log q)$ in \cite[\S7]{FG}, where $p$ is the unique
prime divisor of $q$.    Thus, Theorem \ref{main3}
is quite an improvement and much closer to the result about     conjugacy
classes of maximal (closed) subgroups of a semisimple Lie group 
(with the linear bound $O(r)$ in the rank $r$) recently proved in \cite{GLM}.  
Combining Theorem \ref{main3} with  \cite[Cor 5.3]{LMS}, we see that:

\begin{theor} \label{main4}  
{\sl If $G$ is a finite almost simple group, then $m(G) \le O( (\log |G|)^3)$.}
\end{theor}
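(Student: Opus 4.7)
\smallskip
\noindent
\textbf{Proof proposal.} The plan is to split on the isomorphism type of $S := \soc(G)$ via the Classification of Finite Simple Groups. If $S$ is sporadic, then both $|G|$ and $m(G)$ are bounded by absolute constants, so the inequality is trivial. If $S$ is alternating, or if $S$ is of Lie type of bounded rank (in particular, any exceptional group of Lie type, of which there are only finitely many families), the bound $m(G) = O((\log|G|)^3)$ is already supplied by \cite[Cor~5.3]{LMS}. The one remaining substantive case is that $S$ is a finite classical group of unbounded rank $r$ over $\FF_q$, which is precisely the setting of Theorem \ref{main3}.

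In that case, the standard order formulas yield an absolute constant $c > 0$ with $|S| \geq q^{c r^2}$, hence
$$
\log|G| \;\geq\; \log|S| \;\geq\; c r^2 \log q.
$$
Since $r \geq 1$ and $q \geq 2$, this gives both $r^2 \leq c^{-1} \log|G|$ and $\log q \leq c^{-1} \log|G|$. Theorem \ref{main3} then supplies constants $a,d$ with $m(G) < a r^6 + d r \log\log q$. The first term is $(r^2)^3 \leq c^{-3}(\log|G|)^3$, while for the second we use $r \leq \sqrt{c^{-1}\log|G|}$ and $\log\log q \leq \log\log|G|$, so that $d r \log\log q = O\bigl(\sqrt{\log|G|}\,\log\log|G|\bigr) = o(\log|G|)$. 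Adding the two estimates yields $m(G) = O((\log|G|)^3)$, as required.

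The main obstacle is concentrated entirely in Theorem \ref{main3}: once the polynomial-in-$r$ bound for classical groups is in hand, the deduction of Theorem \ref{main4} reduces to the elementary order estimate $\log|G| \geq c r^2 \log q$ combined with the already established \cite[Cor~5.3]{LMS} for non-classical socles. The only routine care required is to absorb the finitely many sporadic exceptions and the bounded-rank Lie families into the implicit constant, and to check that both bounds are uniform over small $q$ and small $r$ (which they are, since $|S|$ is bounded below by an absolute constant).
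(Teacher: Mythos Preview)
Your proposal is correct and follows the paper's own route: Theorem \ref{main4} is deduced from Theorem \ref{main3} together with \cite[Cor.~5.3]{LMS} (restated here as Lemma \ref{altmax}), and your order estimate $|S| \geq q^{cr^2}$ spells out the arithmetic the paper leaves to the reader.

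One small correction: as quoted in Lemma \ref{altmax}, \cite[Cor.~5.3]{LMS} concerns only $\SSS_n$ and $\AAA_n$, not groups of Lie type of bounded rank, so your appeal to it for the latter is misplaced. This does no damage, however, because Theorem \ref{main3} is stated for \emph{all} finite groups of Lie type, not merely classical ones; your separate treatment of the bounded-rank (in particular, exceptional) case is therefore unnecessary, and you can simply invoke Theorem \ref{main3} uniformly across every Lie-type socle.
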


Let $k(G)$ denote the number
of conjugacy classes of a finite group $G$.  This partially verifies 
(in a very strong way) a conjecture of Aschbacher and the first author \cite{AG}  
for almost simple groups:

\begin{corol} 
{\sl If $G$ is a finite almost simple group, then  $m(G)  \le O( (\log k(G))^6)$.}
\end{corol}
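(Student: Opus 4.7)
The plan is to apply Theorem~\ref{main4}, which provides an absolute constant $C$ with $m(G) \leq C(\log|G|)^{3}$, and then to bound $\log|G|$ in terms of $\log k(G)$. Since $|G|$ and $k(G)$ grow very differently across the two infinite families of almost simple groups, I will stratify the argument by the type of the socle of $G$.

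For $G$ almost simple with socle $S$ of Lie type of rank $r$ over $\FF_{q}$, the first step is the standard fact that $S$ has at least $c_{0}q^{r}$ semisimple conjugacy classes for an absolute constant $c_{0}>0$. Combined with the inequality $k(G)\geq k(S)/[G:S]$ (valid because $S\trianglelefteq G$) and $[G:S]\leq|\Out(S)|=O(\log q)$, this yields $\log k(G)\geq r\log q-O(\log\log q)$. On the other hand $\log|G|\leq C_{1}r^{2}\log q$ uniformly, with $r^{2}$ absorbed into a constant for the exceptional types, whose rank is bounded. Writing $r^{2}\log q=r\cdot(r\log q)$ and separately bounding $r\log q\leq\log k(G)+O(\log\log q)$ and $r\leq\log k(G)/\log 2$, one deduces $\log|G|\leq C_{2}(\log k(G))^{2}$. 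Cubing and invoking Theorem~\ref{main4} gives the desired $m(G)=O((\log k(G))^{6})$ in this case.

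The alternating case requires separate treatment. For $G$ with socle $\AAA_{n}$ (or $\SSS_{n}$) with $n\geq 5$, one has $\log|G|\sim n\log n$ whereas $\log k(G)\sim\sqrt{n}$ (since $k(\AAA_{n})$ is comparable to the partition number $p(n)\sim\exp(\pi\sqrt{2n/3})$), so Theorem~\ref{main4} alone would overshoot the target $O(n^{3})=O((\log k(G))^{6})$ by a factor of $(\log n)^{3}$. Here I will instead invoke an independent polynomial bound $m(\AAA_{n}),m(\SSS_{n})=O(n)$, coming from the O'Nan--Scott classification together with the work of Liebeck--Martin--Shalev; combined with $n=O((\log k(G))^{2})$ this yields the even stronger bound $m(G)=O((\log k(G))^{2})$. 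The finitely many almost simple groups with sporadic socle need no argument.

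The main obstacle is precisely the alternating case: Theorem~\ref{main4} is too weak there, so one must supplement it with the independently known polynomial bounds on the number of conjugacy classes of maximal subgroups of $\SSS_{n}$ and $\AAA_{n}$. By contrast, the Lie-type estimate is nearly automatic once the semisimple-class count and the group-order bound are combined.
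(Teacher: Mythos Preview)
Your approach is correct and is the natural derivation the paper leaves to the reader (no proof is given there). In particular, your observation that Theorem~\ref{main4} alone is insufficient for the alternating case is right: for $G=\SSS_n$ or $\AAA_n$ one has $\log|G|\asymp n\log n$ while $(\log k(G))^6\asymp n^3$, so $(\log|G|)^3$ overshoots by $(\log n)^3$, and the separate input of Lemma~\ref{altmax} is genuinely needed.

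Two small inaccuracies, neither of which damages the argument. First, $|\Out(S)|=O(\log q)$ is not correct for classical socles: diagonal automorphisms contribute a factor of order up to $r+1$, so the right bound is $|\Out(S)|=O(r\log q)$. This still yields $\log k(G)\ge r\log q-O(\log r+\log\log q)=\Theta(r\log q)$, and your chain $\log|G|\le C_1 r^{2}\log q\le C_2(\log k(G))^{2}$ goes through unchanged. Second, the bound you quote for alternating and symmetric groups, $m=O(n)$, is stronger than what Liebeck--Martin--Shalev actually prove; their result, recorded here as Lemma~\ref{altmax}, is $m=n^{1+o(1)}$. That weaker statement is ample: combined with $n=O((\log k(G))^{2})$ it gives $m(G)=O((\log k(G))^{2+o(1)})$, far inside the claimed $O((\log k(G))^{6})$.
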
 

The conjecture in \cite{AG} is that $m(G) < k(G)$ for all finite groups (the example of an elementary
abelian $2$-group shows this is best possible).  It was shown in \cite{AG} that the conjecture holds
for solvable groups.   This also implies various bounds on first cohomology groups.

\section{Type A}
Let $\FF$ be an algebraically closed field of characteristic $p > 0$ and 
let $\GC = \SL_{r+1}(\FF)$. Fixing a maximal torus in $\GC$, we consider 
the set of simple roots $\{ \al_1, \ldots, \al_r\}$ and the corresponding
set of fundamental weights $\{ \om_1, \ldots ,\om_r\}$ (in the usual
ordering). Then the set 
$$\LP = \left\{ \sum^{r}_{i=1}a_i \om_i \mid a_i \in \ZZ,a_i \geq 0
  \right\}$$
of dominant weights admits the 
partial ordering $\succ$ where $\lam \succ \mu$ precisely when 
$\lam - \mu = \sum^{r}_{i=1}k_i\al_i$ for some non-negative integers $k_i$.
Recall that the Cartan matrix expresses the simple roots in
terms of fundamental weights; in particular,
$$\al_1 = 2\om_1-\om_2,
  ~~\al_i = -\om_{i-1}+2\om_i-\om_{i+1} 
  \mbox{ for }2 \leq i \leq r-1,~~
  \al_r = -\om_{r-1}+2\om_r.$$
As usual, $W$ denotes the Weyl group, so $W \cong \SSS_{r+1}$. If 
$\lam \in \LP$, let $L(\lam)$ denote the irreducible $\FF\GC$-module
with highest weight $\lam$. 

We will rely on the following two results.

\begin{theor}\label{premet} {\rm \cite{Pr}}
{\sl Let $\GC$ be a simple, simply connected algebraic group in 
characteristic $p$. If the root system of $\GC$ has different root 
lengths, we assume that $p \neq 2$, and if $\GC$ is of type $G_2$ we 
also assume that $p \neq 3$. Let $\lam$ be a restricted dominant
weight. Then the set of weights of the irreducible $\GC$-module 
$L(\lam)$ is the union of the $W$-orbits of 
dominant weights $\mu$ with $\lam \succ \mu$.
\hfill $\Box$}
\end{theor}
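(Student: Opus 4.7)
The plan is to prove the two inclusions separately. For the inclusion ``$\subseteq$'', the argument is essentially formal: any weight $\nu$ of $L(\lam)$ satisfies $\lam \succeq \nu$, since $L(\lam)$ is generated from $v_\lam$ by the action of the negative part of the distribution algebra (each of whose elements shifts weights by a sum of positive roots). Moreover $\mathrm{ch}\,L(\lam)$ is $W$-invariant, so $\nu = w\mu$ for some $w\in W$ and unique dominant $\mu$, and $\lam \succeq \mu$ follows by acting back by $w^{-1}$ and noting that $\lam - \mu \in \sum_i \ZZ_{\geq 0}\al_i$.

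For the inclusion ``$\supseteq$'', I would argue by induction on the height $\mathrm{ht}(\lam - \mu) = \sum_i k_i$, where $\lam - \mu = \sum_i k_i \al_i$. The base case $\mu = \lam$ is immediate from $v_\lam \neq 0$. For the inductive step, the key claim to establish is: for every dominant $\mu \prec \lam$ with $\lam \succeq \mu$, there exists a dominant weight $\mu^+ \preceq \lam$ together with a positive root $\beta$ such that $\mu^+ - \mu = k\beta$ for some positive integer $k$, and such that the divided power $f_\beta^{(k)}$ (acting via the hyperalgebra of $\GC$) carries some nonzero vector of $L(\lam)_{\mu^+}$ to a nonzero vector of $L(\lam)_{\mu}$. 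By induction $L(\lam)_{\mu^+} \neq 0$, so once such a $\mu^+$ and $\beta$ are produced the inductive step closes.

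The main obstacle is the simultaneous existence of $\mu^+$ \emph{inside the dominant chamber} and the nonvanishing of the $f_\beta^{(k)}$-action on $L(\lam)_{\mu^+}$. I would reduce this to the rank-$2$ subsystems $\langle \al_i, \al_j\rangle$: passing to the corresponding Levi subgroup (or Lie subalgebra), the question becomes a finite combinatorial check about restricted irreducible modules of rank-$2$ groups, where the dominant weights below a given $\lam$ and the relevant $SL_2$-strings through them can be enumerated explicitly. The excluded characteristics enter here in an essential way: the Chevalley commutator formulas and the formulas for $f_\beta^{(k)}$ in terms of products of $f_{\al_i}^{(m)}$'s involve structure constants divisible by $p$ precisely when $p=2$ and the root system has two root lengths (types $B,C,F_4$) or $p=3$ in type $G_2$, and in these cases the propagation of a nonzero weight vector along a string can genuinely fail, as one sees already in low-rank examples. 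Outside these bad cases, a case-by-case verification in each rank-$2$ subsystem suffices.

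One technical point deserves care: the weight $\mu^+$ produced by the rank-$2$ analysis need not be the unique dominant weight covering $\mu$ in the dominance order, so to avoid any circularity I would fix a total refinement of the dominance order on the dominant weights below $\lam$ (by height, say), and assign to each non-maximal element a specific predecessor together with the data of $\beta$ and $k$ realizing the transition; the induction is then linear in this total order and reduces to finitely many rank-$2$ verifications.
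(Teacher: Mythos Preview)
The paper does not contain a proof of this theorem: it is quoted verbatim from Premet \cite{Pr} and marked with a \hfill $\Box$ immediately after the statement, indicating that no argument is given. There is therefore nothing in the paper to compare your proposal against; the authors simply use Premet's result as a black box throughout \S2.

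That said, a brief remark on your sketch itself. The inclusion ``$\subseteq$'' is fine and standard. For ``$\supseteq$'', your strategy of inducting on height and reducing to rank-$2$ Levi subgroups is in the right spirit, but the step where you assert that some $f_\beta^{(k)}$ carries a nonzero vector of weight $\mu^+$ to a nonzero vector of weight $\mu$ is not something one can establish just by a rank-$2$ check on the Levi: restriction of $L(\lam)$ to a Levi subgroup need not be irreducible, so knowing the weight sets of restricted irreducibles for the rank-$2$ group does not by itself tell you that the relevant weight space in $L(\lam)$ is reached from $L(\lam)_{\mu^+}$. Premet's actual argument is organized differently: he shows (under the stated hypotheses on $p$) that the set of weights of $L(\lam)$ coincides with that of the Weyl module $V(\lam)$, essentially by analyzing when a weight can be killed in passing from $V(\lam)$ to its simple head, and this is where the restriction on $p$ enters. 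If you want to flesh out your approach, you would need to work with the full $\GC$-module $L(\lam)$ and an $\mathfrak{sl}_2$-triple for $\beta$, not merely with the rank-$2$ Levi, and argue via the $\beta$-string structure that the bottom of the string through $\mu^+$ is attained; this is closer to what is actually required.
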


\begin{lemma}\label{orb} {\rm \cite[Lemma 10.3B]{H}}
{\sl Let $\lam = \sum^r_{i=1}a_i\om_i$ be a dominant weight. Then the 
stabilizer of $\lam$ in the Weyl group is the Young subgroup 
generated by the reflections along the simple roots $\al_i$ for which 
$a_i = 0$.
\hfill $\Box$}
\end{lemma}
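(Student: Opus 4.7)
The plan is to prove two inclusions between the full stabilizer $W_\lam \subseteq W$ of $\lam$ and the subgroup $W_0 := \la s_i \mid a_i = 0 \ra$, where $s_i$ denotes the simple reflection along $\al_i$.

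The inclusion $W_0 \subseteq W_\lam$ is a direct computation. Applying the reflection formula $s_i(\mu) = \mu - \la \mu, \al_i^\vee \ra \al_i$ together with the defining identity $\la \om_j, \al_i^\vee \ra = \delta_{ij}$ to $\lam = \sum_j a_j\om_j$ yields $s_i(\lam) = \lam - a_i\al_i$, which equals $\lam$ precisely when $a_i = 0$. So every generator of $W_0$ fixes $\lam$.

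For the harder inclusion $W_\lam \subseteq W_0$, I would argue by induction on the Coxeter length $\ell(w)$ of $w \in W_\lam$ in the Coxeter system $(W, \{s_1,\ldots,s_r\})$. The base case $w = 1$ is trivial. For $w \neq 1$, I invoke the standard Weyl-group fact that $w^{-1}$ sends at least one simple root to a negative root: pick $i$ with $w^{-1}(\al_i) = -\beta$ for some $\beta \in \Phi^+$. Using the $W$-invariance of the evaluation pairing together with $(-\gam)^\vee = -\gam^\vee$, one computes
$$
a_i = \la \lam, \al_i^\vee \ra = \la w\lam, \al_i^\vee \ra = \la \lam, w^{-1}(\al_i)^\vee \ra = -\la \lam, \beta^\vee \ra.
$$
The dominance of $\lam$ makes the right-hand side $\leq 0$ while $a_i \geq 0$, so $a_i = 0$; thus $s_i \in W_0 \cap W_\lam$, $\ell(s_iw) < \ell(w)$, and $s_iw \in W_\lam$. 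The induction hypothesis then places $s_iw$ in $W_0$, whence $w = s_i(s_iw) \in W_0$.

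The main obstacle is the Coxeter-theoretic input that some simple root is carried by $w^{-1}$ to a negative root; this rests on the identity $\ell(w) = |\{\al \in \Phi^+ \mid w\al \in -\Phi^+\}|$, which is the cornerstone of the length function on $W$. Granting this input, the dominance of $\lam$ is exactly what is needed to ensure that the length-reducing step stays inside the Young subgroup $W_0$, and the induction closes cleanly.
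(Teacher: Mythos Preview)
Your argument is correct and is essentially the standard proof of this classical fact: the easy inclusion is a one-line reflection computation, and the reverse inclusion follows by induction on length using that a nontrivial $w$ has a left descent $s_i$, together with dominance of $\lam$ to force $a_i=0$. The paper does not supply its own proof but merely cites \cite[Lemma~10.3B]{H}, where the same length-induction idea is used; so your write-up matches the intended reference.
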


In this section we will produce a bound which is at most 
polynomial in $n$ for the total number $R_n = R_n(\GC)$ 
of restricted irreducible representations of $\GC$ of degree at most $n$.

\subsection{The large range: $n \geq (r+1)!$} 
First we derive a lower bound on $\dim L(\lam)$ for a {\it restricted} 
$\lam := \sum^r_{i=1}a_i\om_i \in \LP$. For any integers 
$x_i$ with $0 \leq x_i \leq \lfloor a_i/2 \rfloor$, $1 \leq i \leq r$, 
we can express $\gam := \sum^r_{i=1}x_i\al_i$ as
$\sum^{r}_{i=1}y_i\om_i$ with $y_i = -x_{i-1} + 2x_i -x_{i+1}$, where 
we define $x_{0} = x_{r+1} = 0$. In particular, $y_i \in \ZZ$ and 
$y_i \leq a_i$. It follows that   
$\mu := \lam - \gam$ is dominant and moreover
$\lam \succ \mu$. Hence, by Theorem \ref{premet} any such $\mu$
is a weight of $L(\lam)$. Assume in addition that $\mu \neq 0$. 
By Lemma \ref{orb}, the stabilizer of $\mu$ in $W = \SSS_{r+1}$ 
is a proper (Young) subgroup and so of index $\geq r+1$. Thus the 
$W$-orbit of $\mu$ has length $\geq (r+1)$. Since each $W$-orbit 
contains exactly one dominant weight, we have proved

\begin{lemma}\label{bound1}
{\sl Let $\lam := \sum^r_{i=1}a_i\om_i \in \LP$ be a restricted weight. 
Then
$$\dim L(\lam) \geq N(\lam) := 
  1 + (r+1)\left\{ \prod^{r}_{i=1}\left( 
  1 + \left\lfloor \frac{a_i}{2}\right\rfloor\right) -1 \right\}.$$ 
\hfill $\Box$}
\end{lemma}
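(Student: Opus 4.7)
The plan is to produce many distinct dominant weights $\mu$ with $\lam\succ\mu$ and then count them with the size of their $W$-orbit, invoking Theorem \ref{premet} to see that each is a weight of $L(\lam)$. I would parametrize such $\mu$'s by tuples $(x_1,\ldots,x_r)$ with $0\le x_i\le\lfloor a_i/2\rfloor$, setting $\gam=\sum_i x_i\al_i$ and $\mu=\lam-\gam$. Rewriting $\gam$ in the fundamental basis via the Cartan matrix gives $\gam=\sum_i y_i\om_i$ with $y_i=-x_{i-1}+2x_i-x_{i+1}$ (taking $x_0=x_{r+1}=0$), so $y_i\le 2x_i\le a_i$ and $\mu=\sum_i(a_i-y_i)\om_i$ is dominant with $\lam\succ\mu$. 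Since the simple roots are linearly independent, distinct tuples give distinct $\mu$'s, and each lies in its own $W$-orbit (as each orbit contains a unique dominant representative).

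The second step is to bound orbit sizes from below. When $\mu\ne 0$, Lemma \ref{orb} identifies $\mathrm{Stab}_W(\mu)$ with a proper Young subgroup of $W\cong\SSS_{r+1}$; the smallest index of a proper Young subgroup of $\SSS_{r+1}$ is $r+1$ (attained by $\SSS_r\times\SSS_1$), so $|W\cdot\mu|\ge r+1$. The trivial tuple $(0,\ldots,0)$ recovers $\mu=\lam$ itself, which contributes at least $1$ to $\dim L(\lam)$ as the highest weight; the remaining $\prod_i(1+\lfloor a_i/2\rfloor)-1$ tuples each contribute at least $r+1$. Summing gives exactly $N(\lam)$.

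The one subtlety I expect to need care is the possibility that $\mu=0$ occurs for a nontrivial tuple, in which case its orbit size drops to $1$ and could, in principle, undermine the bound. A short computation shows that summing $y_i=a_i$ telescopes to $x_1+x_r=\sum_i a_i$, while the constraints $x_1,x_r\le\lfloor a_i/2\rfloor$ give $x_1+x_r\le(a_1+a_r)/2$; this forces $\lam=0$ when $r\ge 2$, a trivial case where $N(\lam)=1=\dim L(0)$. For $r=1$, any nonzero restricted $\lam$ already has trivial stabilizer in $W=\SSS_2$, so $|W\cdot\lam|=2=r+1$, which more than absorbs the potential loss from a $\mu=0$ appearing in the list. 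Modulo this bookkeeping around the zero weight, the proof is a direct combination of Theorem \ref{premet} and Lemma \ref{orb}.
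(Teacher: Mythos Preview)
Your argument is correct and follows essentially the same route as the paper: parametrize dominant weights below $\lam$ by tuples $(x_1,\ldots,x_r)$ with $0\le x_i\le\lfloor a_i/2\rfloor$, invoke Theorem~\ref{premet} to put each resulting $\mu$ into the weight set of $L(\lam)$, and use Lemma~\ref{orb} to get orbit length $\ge r+1$ for nonzero $\mu$. The paper's own proof is in fact terser than yours on the $\mu=0$ edge case---it simply observes that at most one of the $\prod_i(1+\lfloor a_i/2\rfloor)$ dominant weights can be zero, so at least $\prod_i(1+\lfloor a_i/2\rfloor)-1$ of them have orbit $\ge r+1$ and the remaining one contributes $\ge 1$; your telescoping computation and separate $r=1$ analysis make this explicit but are not strictly needed.
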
  
 
For any real $d \geq 1$, consider the function 
$$h(d) = 1 + \frac{1}{2} + \frac{1}{3} + \cdots + 
         \frac{1}{\lfloor d \rfloor}.$$ 
Since $1/k < \int^{k}_{k-1}dx/x$ for $k \geq 2$, we have 
\begin{equation}\label{for-h}
  h(d) < 1 + \int^{d}_{1}\frac{dx}{x} = 1 +\log d.
\end{equation}

Next we consider the function $g(r,d)$, which is the number of 
$r$-tuples $(c_1, \ldots ,c_r)$ of positive integers $x_i$ such
that $\prod^r_{i=1}c_i \leq d$; in particular, 
$g(1,d) = \lfloor d \rfloor$. We claim that 
\begin{equation}\label{for-g}
 g(r,d) \leq d\cdot h(d)^{r-1}.
\end{equation}
Indeed, we induct on $r$ and assume $r \geq 2$. Then 
$$g(r,d) = \sum^{\lfloor d \rfloor}_{j=1}g(r-1,d/j)
  \leq \sum^{\lfloor d \rfloor}_{j=1}\frac{d}{j} \cdot h\Bigl(\frac{d}{j}\Bigr)^{r-2} 
  \leq h(d)^{r-2} \cdot \sum^{\lfloor d \rfloor}_{j=1}\frac{d}{j} = 
  d \cdot h(d)^{r-1}.$$  

\begin{propo}\label{A-large}
{\sl Assume $n \geq (r+1)!$. Then for 
$\GC = \SL_{r+1}(\FF)$ we have 
$$R_{n}(\GC) \leq n^{3.4}/r^{3}.$$}
\end{propo}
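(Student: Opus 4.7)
The plan is to combine the lower bound on $\dim L(\lam)$ from Lemma~\ref{bound1} with the divisor-type estimate \eqref{for-g}. Given a restricted weight $\lam = \sum_{i=1}^r a_i \om_i$ with $\dim L(\lam) \leq n$, I set $c_i := 1 + \lfloor a_i/2 \rfloor \geq 1$, so that Lemma~\ref{bound1} yields $\prod_{i=1}^r c_i \leq D$, where $D := 1 + (n-1)/(r+1)$. Because each $c_i$ determines $a_i$ up to an additive choice of $0$ or $1$ (namely $a_i \in \{2c_i-2,\,2c_i-1\}$), each tuple $(c_1,\ldots,c_r) \in \NN^r$ with $\prod c_i \leq D$ arises from at most $2^r$ restricted weights. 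Hence $R_n(\GC) \leq 2^r g(r,D)$, and applying \eqref{for-g} together with \eqref{for-h} gives
\[
R_n(\GC) \leq 2^r D (1 + \log D)^{r-1}.
\]

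The hypothesis $n \geq (r+1)!$ forces $n \geq r+1$, so $D \leq 2n/(r+1)$. It therefore suffices to show
\[
2^{r+1} r^3 \Bigl(1 + \log\frac{2n}{r+1}\Bigr)^{r-1} \leq (r+1)\, n^{2.4}.
\]
The key analytic input is Stirling's estimate in the form $\log n \geq \log((r+1)!) \geq (r+1)(\log(r+1) - 1)$, which guarantees that $\log n$ is at least of order $r \log r$. Taking logarithms, the target inequality reduces to controlling $(r-1)\log(1+\log n)$ by $2.4 \log n$ up to lower-order terms: since $\log n \geq (r+1)(\log(r+1) - 1)$, the right-hand side is of order $r \log r$, while the left-hand side is only of order $r(\log r + \log\log r)$, and the ratio tends to $1/2.4 < 1$ as $r \to \infty$.

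The main obstacle is making this comparison uniform in $r$ so that the constants $3.4$ and $r^3$ in the statement come out correctly. For large $r$, the Stirling estimate provides ample slack; for small $r$ (especially in the range $r = 2, 3, 4$, where the inequality is tightest), I expect to verify it by a direct numerical check at the boundary value $n = (r+1)!$, and then extend to all $n \geq (r+1)!$ by observing that the left-hand side grows only polylogarithmically in $n$ while the right-hand side is $n^{2.4}$. The $2^r$ factor is absorbed in the end by the factorial growth of $(r+1)!$, since $2^r = n^{r \log 2/\log n}$ and $r/\log n \to 0$ under the Stirling bound.
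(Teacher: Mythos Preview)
Your approach is the paper's approach: start from Lemma~\ref{bound1}, pass to the product constraint $\prod c_i \leq D = 1 + (n-1)/(r+1)$, use \eqref{for-g} and \eqref{for-h} to get \eqref{bound2}, and then control $2^r D(1+\log D)^{r-1}$ against $n^{3.4}/r^3$ via Stirling. You also correctly identify the crux, namely making the Stirling comparison uniform so the constants $3.4$ and $r^3$ actually come out.

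What you are missing is precisely that uniformity step; ``the Stirling estimate provides ample slack for large $r$'' is true but is not a proof. The paper carries this out by proving the explicit inequality $r+1 < (1.8)\log n/\log\log n$ for $n \geq \max(6,(r+1)!)$ (this is \eqref{ratio}), which is itself delicate: it is checked numerically for $5 \leq r \leq 69$ (the ratio is about $1.799$ at $r=10$, so $1.8$ is essentially sharp) and then established for $r \geq 70$ by a careful elementary argument. From \eqref{ratio} one reads off $(1+\log D)^{r-1} < (\log n)^{r-1} < n^{1.8}/r^2$ and separately $2^r < ((r+1)!)^{0.6} \leq n^{0.6}$ for $r \geq 4$, and these three factors multiply to $n^{3.4}/r^3$. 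For $r \leq 4$ the paper falls back on the trivial bound $R_n \leq n^r$ together with L\"ubeck's tables for a few small ranges of $n$. Your proposed alternative for small $r$---check \eqref{bound2} directly at $n=(r+1)!$ and extend by monotonicity of $(1+\log D)^{r-1}/n^{2.4}$---would in fact also work and is arguably cleaner, but you would still need to actually do it.
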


\begin{proof}
1) By Lemma \ref{bound1}, $R_n$ is bounded above by the number of 
dominant weights $\lam = \sum^r_{i=1}a_i\om_i$ such that 
$N(\lam) \leq n$. Setting $c_i = 1 + \lfloor a_i/2 \rfloor$, 
we see that $1 \leq c_i \in \ZZ$ and 
$$\prod^r_{i=1}c_i \leq d := 1+(n-1)/(r+1).$$  
Also, $2c_i-2 \leq a_i \leq 2c_i-1$. Thus every $r$-tuple 
$(c_1, \ldots ,c_r)$ corresponds to at most $2^r$ distinct (restricted)
$r$-tuples $(a_1, \ldots ,a_r)$. Together with
(\ref{for-h}) and (\ref{for-g}), this implies that 
\begin{equation}\label{bound2}
  R_n \leq 2^r \cdot g(r,d) \leq 2^r \cdot d \cdot (1+ \log d)^{r-1}.
\end{equation}

Restricting $L(\lam)$ to a fundamental subgroup $\SL_2(\FF)$, 
we see that $a_i \leq \min(n-1,p)$; in particular we get the trivial
bound $R_n \leq \min(p^r,n^r)$. Since $n \geq (r+1)!$, the statement follows 
immediately when $r \leq 2$ or when $r = 3$ and $n > 3787$. 
On the other hand, if $r = 3$ and $24 \leq n \leq 500$, then 
$n^{3.4}/r^3 > 5000$ whereas $R_{500}(\GC) < 200$ by \cite{Lu}.
If $r = 3$ and $500 < n \leq 3787$, then (\ref{bound2}) implies
that $R_n \leq R_{3787} < 5 \cdot 10^5 < n^{3.4}/r^3$.  
Also when $r = 4$ and 
$120 \leq n < 720$ then $n^{3.4}/r^{3} > 10^5$ whereas 
$R_{719}(\GC) \leq 170$ by \cite{Lu}. So we will assume that $r \geq 4$
and $n \geq 720$.

2) By our assumption, $n \geq (r+1)! > ((r+1)/e)^{r+1}$. We claim that 
\begin{equation}\label{ratio}
  r+1 < (1.8) \cdot \frac{\log n}{\log \log n}
\end{equation} 
whenever $n \geq \max(6,(r+1)!)$ and $r \geq 1$. 
Indeed, the inequality is obvious if $r \leq 4$. We may now
assume that $r \geq 5$ and so $n \geq 720$. Observe that the function 
$f(x) = (\log x)/x$ is decreasing on $[e,\infty)$ and increasing on 
$(0,e]$, with maximum $1/e$ at $x = e$. 
Since $\log n \geq \log 720 > 6$, we see that  
$(\log n)/(\log \log n)$ is increasing as a function of $n \geq 720$. Hence it
suffices to prove (\ref{ratio}) for $n = (r+1)!$. Direct computation reveals
that (\ref{ratio}) holds for $n = (r+1)!$ and $5 \leq r \leq 69$
(in fact, the ratio between $11$ and $(\log 11!)/(\log \log 11!)$ is about 
$1.7989$). Now assume that (\ref{ratio}) does not hold for some 
$r \geq 70$. Writing $n = e^{e^x}$, we see that $x \geq \log \log 720 > 1$, and 
so $f(x) \leq 1/e$, i.e. $e (\log x) \leq x$. It follows that  
$\log n = e^x \geq x^e = (\log \log n)^e$, whence 
$$\log \log n > e \log \log \log n.$$
Writing 
$$r+1 = et \cdot \frac{\log n}{\log \log n},$$ 
we must have that $et \geq 1.8$ and 
$\log t > -0.4123$. But $n \geq 70! > 10^{100}$, hence 
$\log \log n > 5.4392$ and so 
$$(1.8)(1-\frac{1}{e} + \frac{\log t}{\log \log n}) > 1.$$   
Now 
$$\begin{array}{rl}(r+1) \cdot \log\dfrac{r+1}{e} & =   
  \dfrac{et \cdot \log n}{\log \log n} \cdot 
  (\log t + \log \log n - \log \log \log n)\\ 
  & = et \cdot \log n \cdot  
  \Bigl(1 - \dfrac{\log \log \log n}{\log \log n} + \dfrac{\log t}{\log \log n}\Bigr)\\ 
  \vspace{-2mm} & \vspace{-2mm} \\
  & > et \cdot \log n \cdot  
  \Bigl(1 - \dfrac{1}{e} + \dfrac{\log t}{\log \log n}\Bigr) \\
  & > \log n,
 \end{array}$$
and so $n < ((r+1)/e)^{r+1} < (r+1)!$, a contradiction. 

Recall that $r \geq 4$. Then $n \geq (r+1)! > e^r$, so 
$d = 1+(n-1)/(r+1) < n/r < n/e$, whence $1 + \log d < \log n$
and also $r < \log n$. It then follows by (\ref{ratio}) that
$$(1+ \log d)^{r-1} < (\log n)^{r-1} <  
  (\log n)^{\frac{(1.8)\log n}{\log \log n} -2} < n^{1.8}/r^2.$$
Also, since $r \geq 4$ we have $2^r < ((r+1)!)^{0.6} \leq n^{0.6}$.
Consequently,
$$R_n  \leq 2^r \cdot d \cdot (1+ \log d)^{r-1}
  < n^{0.6} \cdot \frac{n}{r} \cdot \frac{n^{1.8}}{r^{2}}  
  = \frac{n^{3.4}}{r^{3}}.$$
\end{proof}

Of course when $n \geq (r+1)!$, the term $r^3$ in the bound $n^{3.4}/r^3$ is
negligible. Our next goal is to prove the upper bound $n^{3.8}$ for all $n$.
  
\subsection{A generating function estimate}
Let $\NN$ denote the set of non-negative integers and
$$k(r,s) := |\{(x_1,\ldots,x_r)\in \NN^r \Bigm| 
\sum_{i=1}^r\min(i,r+1-i) \cdot x_i = s\}|.$$
We can regard $k(r,s)$ as the $x^s$-coefficient in the power series
$$(1-x)^{-1}(1-x^2)^{-1}(1-x^3)^{-1}\cdots(1-x^2)^{-1}(1-x)^{-1}.$$
As $(1-x^k)^{-1}-1$ has non-negative coefficients, $k(r,s)$ is bounded 
above by the
$x^s$ coefficient in
$$\Bigl(\prod_{i=1}^\infty (1-x^i)^{-1}\Bigr)^2 = 
\Bigl(\sum_{n=0}^\infty p(n) x^n\Bigr)^2,$$
where $p(n)$ denotes the partition function.
Thus
$$k(r,s) \le \sum_{i=0}^s p(i)p(s-i).$$
From the classical theory of the partition function \cite[Th.~14.5]{Ap}, 
it is well known that
\begin{equation}\label{pi}
  p(n) < e^{\pi\sqrt{2n/3}}.
\end{equation}
By the concavity of $\sqrt x$,
$$k(r,s) < (s+1)e^{2\pi\sqrt{s/3}}.$$
It follows that

\begin{lemma}\label{part}
{\sl In the above notation,
$$\sum_{s=0}^{N}k(r,s) < 
  \frac{(N+1)(N+2)}{2}e^{2\pi\sqrt{N/3}}.$$
\hfill $\Box$}
\end{lemma}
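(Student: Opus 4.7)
The plan is to assemble estimates that have essentially all been laid out in the preceding discussion and combine them with a crude term-by-term bound on the sum.

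First I would recall from the preceding paragraphs the chain of inequalities already established: the convolution estimate
$$k(r,s) \le \sum_{i=0}^{s} p(i) p(s-i),$$
which follows from the generating function identification, together with the Hardy--Ramanujan style bound $p(n) < e^{\pi\sqrt{2n/3}}$ from (\ref{pi}). Substituting the latter into the former gives
$$k(r,s) < \sum_{i=0}^{s} e^{\pi\sqrt{2/3}\,(\sqrt{i}+\sqrt{s-i})}.$$

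Second, I would invoke the concavity of $\sqrt{x}$, which yields $\sqrt{i}+\sqrt{s-i} \le 2\sqrt{s/2} = \sqrt{2s}$ for every $0 \le i \le s$, so that each summand is at most $e^{\pi\sqrt{2/3}\cdot\sqrt{2s}} = e^{2\pi\sqrt{s/3}}$. Since there are $s+1$ terms, this gives the per-$s$ bound
$$k(r,s) < (s+1)\, e^{2\pi\sqrt{s/3}},$$
as already noted in the text.

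Third, to obtain the claimed estimate, I would sum from $s=0$ to $N$ and simply bound the exponential factor by its value at the endpoint:
$$\sum_{s=0}^{N} k(r,s) < \sum_{s=0}^{N} (s+1)\, e^{2\pi\sqrt{s/3}} \le e^{2\pi\sqrt{N/3}} \sum_{s=0}^{N} (s+1) = \frac{(N+1)(N+2)}{2}\, e^{2\pi\sqrt{N/3}},$$
using the arithmetic series $\sum_{s=0}^{N}(s+1) = (N+1)(N+2)/2$. This is precisely the bound asserted.

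There is no real obstacle here: every nontrivial ingredient (the generating-function identification of $k(r,s)$, the partition-function estimate, and the concavity step) has already been done in the excerpt. The lemma is a direct book-keeping consequence, so the only thing to verify is that bounding $e^{2\pi\sqrt{s/3}}$ by $e^{2\pi\sqrt{N/3}}$ uniformly in $s \le N$ does not lose too much — but since the final application (evidently in the next subsection) will need a clean closed-form bound rather than a sharp asymptotic, this crude step is exactly what one wants.
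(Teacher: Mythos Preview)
Your proposal is correct and follows exactly the route the paper intends: the paper records the per-$s$ estimate $k(r,s) < (s+1)e^{2\pi\sqrt{s/3}}$ immediately before the lemma and then states the lemma with a $\Box$, leaving the final summation step (bounding the exponential by its value at $s=N$ and summing the arithmetic series) as the obvious conclusion you have written out.
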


\subsection{Weight distributions in simple modules}
Throughout this subsection we set $k := \lfloor (r-1)/2 \rfloor$ 
so that $r \in \{2k+1,2k+2\}$.
For any $\lam = \sum^{r}_{i=1}a_i\om_i \in \LP$ we define 
$$[\lam] := \sum^{r}_{i=1}\min(i,r+1-i) \cdot a_i.$$
Note that $[\al_i] = 0$ unless 
$i \in \{ k+1,r-k \}$. 

\begin{lemma}\label{incr}
{\sl Let $\lam = \sum^{r}_{i=1}a_i\om_i \in \LP$ and $1 \leq m \leq k$. 
Assume that $\sum^{m}_{i=1}ia_i > m$. Then there exists 
$\mu  = \sum^{r}_{i=1}b_i\om_i \in \LP$ with $[\lam] = [\mu]$, 
$\lam \succ \mu$, $b_i = a_i$ for all $i \geq m+2$, and 
$b_{m+1} > a_{m+1}$.}
\end{lemma}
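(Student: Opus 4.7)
The plan is to translate the problem into the combinatorics of partitions of type~$A$. Associate to each dominant weight $\lam = \sum_{i=1}^r a_i \om_i$ the partition $\pi$ with parts $\pi_j := \sum_{i=j}^r a_i$ for $1 \leq j \leq r+1$ (so $\pi_{r+1}=0$ and $a_j = \pi_j - \pi_{j+1}$). In this dictionary, subtracting a simple root $\al_i$ corresponds to moving a single box from row $i$ to row $i+1$ of the Young diagram, so a weight $\mu = \lam - \sum_{i=1}^m x_i \al_i$ with $x_i \in \ZZ_{\geq 0}$ is encoded by a companion partition $\pi'$ satisfying $\pi'_j = \pi_j$ for $j \geq m+2$ such that $(\pi'_1,\ldots,\pi'_{m+1})$ is a partition of $N := \sum_{j=1}^{m+1} \pi_j$ dominated by $(\pi_1,\ldots,\pi_{m+1})$ in the usual dominance order. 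Under this dictionary, the required $b_{m+1} > a_{m+1}$ becomes $\pi'_{m+1} > \pi_{m+1}$, while $[\lam] = [\mu]$ is automatic since $[\al_i] = 0$ for $1 \leq i \leq m \leq k$.

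Next, I would rewrite the hypothesis using $\pi_j - \pi_{m+1} = \sum_{i=j}^m a_i$ and an exchange of summation: this yields $N - (m+1)\pi_{m+1} = \sum_{i=1}^m i\,a_i$, so the assumption $\sum_{i=1}^m i\,a_i > m$ is exactly $N \geq (m+1)(\pi_{m+1}+1)$. I would then take $\pi'$ to be the most even partition of $N$ into $m+1$ parts: writing $N = (m+1)e+s$ with $0 \leq s \leq m$, set $\pi'_j = e+1$ for $j \leq s$, $\pi'_j = e$ for $s < j \leq m+1$, and $\pi'_j = \pi_j$ for $j \geq m+2$. The inequality $N \geq (m+1)(\pi_{m+1}+1)$ forces $e \geq \pi_{m+1}+1$, hence $\pi'_{m+1} \geq \pi_{m+1}+1 > \pi_{m+1} \geq \pi_{m+2} = \pi'_{m+2}$, so $\pi'$ is globally a partition and corresponds to a dominant weight $\mu$ with the correct $b_{m+1} > a_{m+1}$ and $b_j = a_j$ for $j \geq m+2$.

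Finally, I would invoke the classical fact that the most even partition is the unique minimum, in dominance order, among partitions of $N$ into at most $m+1$ parts---provable by iteratively moving a box downward whenever two consecutive parts differ by at least $2$---to conclude that $(\pi_1,\ldots,\pi_{m+1})$ dominates $(\pi'_1,\ldots,\pi'_{m+1})$, and hence that $\lam \succ \mu$. The only real work, and the main bookkeeping hurdle, is the dictionary itself: verifying that restricting to $x_i = 0$ for $i > m$ is equivalent to $\pi_j = \pi'_j$ for $j \geq m+2$ together with $\sum_{j \leq m+1}(\pi_j - \pi'_j) = 0$, and that each of the four conditions of the lemma reads off cleanly on the partition side.
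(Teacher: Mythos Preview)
Your argument is correct and is genuinely different from the paper's. The paper proves the lemma by induction on $m-j$, where $j$ is the largest index $\leq m$ with $a_j>0$: at each step it subtracts a single positive root $\alpha_j$ or $\alpha_i+\alpha_{i+1}+\cdots+\alpha_j$ from $\lambda$, thereby pushing a nonzero coefficient one slot to the right, until the base case $a_m>0$ is reached and one more subtraction produces $b_{m+1}=a_{m+1}+1$. In particular the paper's $\mu$ increments $a_{m+1}$ by exactly one.

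Your route via the partition dictionary $\pi_j=\sum_{i\geq j}a_i$ replaces this induction by a single global move: jump straight to the most even partition of $N=\sum_{j\le m+1}\pi_j$ into $m+1$ parts, invoking the classical fact that this is the dominance minimum among such partitions (your sketch ``move a box down whenever two consecutive parts differ by $\geq 2$'' is exactly the standard proof, and one can also argue directly that if $\sum_{l\le j}\pi_l<\sum_{l\le j}\pi'_l\le j(e{+}1)$ then $\pi_j\le e$, whence the tail $\sum_{l>j}\pi_l\le(m{+}1{-}j)e\le\sum_{l>j}\pi'_l$, a contradiction). Your reformulation of the hypothesis as $N\geq (m+1)(\pi_{m+1}+1)$ is clean and makes the inequality $\pi'_{m+1}=e\ge\pi_{m+1}+1$ transparent; the checks that $\pi'$ is globally a partition, that $b_i=a_i$ for $i\ge m+2$, and that $[\lambda]=[\mu]$ (since $[\alpha_i]=0$ for $i\le k$) all go through as you say. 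The trade-off: the paper's approach is entirely self-contained within the weight lattice and needs no translation layer, while yours is shorter once the dictionary is set up and yields a canonical (indeed dominance-minimal in the first $m{+}1$ rows) choice of $\mu$ rather than an inductively constructed one.
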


\begin{proof}
Let $j$ be the largest index such that $1 \leq j \leq m$ and $a_j > 0$.
Such $j$ exists since $\sum^{m}_{i=1}ia_i > m > 0$. We prove the lemma
by induction on $m-j$. Note that if 
$\mu = \lam - \sum^{m}_{i=1}k_i\al_i = \sum^r_{i=1}b_i\om_i$ with
$k_i \in \NN$, then $[\lam] = [\mu]$, $\lam \succ \mu$, and 
$b_i = a_i$ for $i \geq m+2$. 

1) For the induction base, consider the case $j = m$, i.e. $a_m > 0$. 
If $a_m \geq 2$, then $\mu := \lam-\al_m \in \LP$ has the desired 
properties, as $\al_m = -\om_{m-1}+2\om_m-\om_{m+1}$. Assume $a_{m} = 1$.
Since $\sum^{m}_{i=1}ia_i > m$, there must be some $i$ with $a_i \geq 1$ 
and $1 \leq i \leq m-1$. Then 
$\beta = \al_i + \al_{i+1} + \cdots + \al_{m}$ equals 
$-\om_{i-1}+\om_i+\om_m-\om_{m+1}$ (where we define $\om_0 := 0$). It 
follows that $\mu := \lam - \beta \in \LP$ has the desired properties.

2) For the induction step, assume that $a_j > 0$, but 
$a_{j+1} = a_{j+2} = \cdots = a_m = 0$ for some $1 \leq j \leq m-1$. 
Assume $a_j \geq 2$. Setting 
$\nu := \lam - \al_j = \sum^r_{i=1}a'_i\om_i$, we see that
$a'_i$ equals $a_i+1$ if $i = j-1 > 0$ or $i = j+1$, $a_i -2$ if $i = j$,
and $a_i$ otherwise. Thus $\nu \in \LP$, $[\lam] = [\nu]$, 
$\lam \succ \nu$, $\sum^m_{i=1}ia'_i = \sum^m_{i=1}ia_i > m$, and 
$a'_{j+1} = 1$. Applying the induction hypothesis to $\nu$ instead of 
$\lam$, we see that the desired $\mu$ exists. 

Now we may assume that $a_j = 1$. Since 
$\sum^{m}_{i=1}ia_i > m$, there must be some $i$ with $a_i \geq 1$ 
and $1 \leq i \leq j-1$. Then 
$\gam = \al_i + \al_{i+1} + \cdots + \al_j$ equals 
$-\om_{i-1}+\om_i+\om_j-\om_{j+1}$. As above,
we can check that $\nu \in \LP$, $[\lam] = [\nu]$, 
$\lam \succ \nu$, $\sum^m_{i=1}ia'_i = \sum^m_{i=1}ia_i > m$, and 
$a'_{j+1} = 1$, $a'_{i} = a_i$ for $i \geq m+2$, for 
$\nu := \lam -\gam = \sum^r_{i=1}a'_i\om_i$. Hence we can apply the
induction hypothesis to $\nu$ to obtain the desired $\mu$.
\end{proof}

\begin{lemma}\label{middle}
{\sl Let $\lam = \sum^{r}_{i=1}a_i\om_i \in \LP$ and $1 \leq m \leq k$. 
Assume that $a_{k+1} \geq 2m+1$ if $r = 2k+1$ and 
$a_{k+1}+a_{k+2} \geq 2m+3$ if $r = 2k+2$. Then there exists a dominant 
weight $\mu = \sum^{r}_{i=1}b_i\om_i$ with $\lam \succ \mu$ and 
$b_i > 0$ for $k-m+1 \leq i \leq r-k+m$.}
\end{lemma}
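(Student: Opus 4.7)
The plan is to construct $\mu$ directly as $\mu = \lam - \gam$, where $\gam = \sum_i c_i \al_i$ is a prescribed non-negative integer combination of simple roots; then $b_i = a_i + c_{i-1} - 2c_i + c_{i+1}$ (with $c_0 = c_{r+1} := 0$). The required inequality $b_i \geq 1$ on the middle range thus becomes a discrete inhomogeneous Laplace inequality in the $c_i$'s, which I will solve by choosing an explicit quadratic pattern.

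For case (i), $r = 2k+1$, take the symmetric quadratic pattern
\begin{equation*}
c_{k+1-s} = c_{k+1+s} = \binom{m-s+1}{2}, \quad 0 \leq s \leq m,
\end{equation*}
and $c_i = 0$ otherwise. A direct computation with $\al_j = -\om_{j-1} + 2\om_j - \om_{j+1}$ yields $b_i = a_i + 1$ for $i \in [k-m+1, k+m+1]\setminus\{k+1\}$, $b_{k+1} = a_{k+1} - 2m$, and $b_i = a_i$ elsewhere. Since $r - k + m = k+m+1$, the hypothesis $a_{k+1} \geq 2m+1$ gives $b_{k+1} \geq 1$, so $b_i \geq 1$ throughout $[k-m+1, r-k+m]$.

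For case (ii), $r = 2k+2$, the middle range $[k-m+1, r-k+m] = [k-m+1, k+m+2]$ straddles the two central positions $k+1, k+2$, so I take $\gam = \sum_{s=0}^{m} c^L_s \al_{k+1-s} + \sum_{s=0}^{m} c^R_s \al_{k+2+s}$, with each half required to satisfy the Laplace recurrence $2 c_s - c_{s-1} - c_{s+1} = -1$ for $1 \leq s \leq m$ and outside boundary value $c^L_{m+1} = c^R_{m+1} = 0$; this gives a one-parameter integer family on each side. The two free parameters are chosen so that all $c^L_s, c^R_s \geq 0$ and the central conditions $b_{k+1} = a_{k+1} - (2c^L_0 - c^L_1 - c^R_0) \geq 1$ and $b_{k+2} = a_{k+2} - (2c^R_0 - c^R_1 - c^L_0) \geq 1$ both hold. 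Summing the two central conditions gives $b_{k+1} + b_{k+2} = (a_{k+1}+a_{k+2}) - (c^L_0 - c^L_1) - (c^R_0 - c^R_1)$, and the hypothesis $a_{k+1}+a_{k+2} \geq 2m+3$ provides exactly the slack needed. A general $\lam$ reduces to the rectangular case $\lam = a_{k+1}\om_{k+1}+a_{k+2}\om_{k+2}$ by removing $\lam - a_{k+1}\om_{k+1} - a_{k+2}\om_{k+2}$ at the start and adding it back into $\mu$ at the end, which only raises the $b_i$'s and preserves $\lam \succ \mu$.

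The main obstacle is the feasibility of the two-parameter integer LP in case (ii) when one of $a_{k+1}, a_{k+2}$ is much smaller than the other (possibly zero): the symmetric choice $u^L = u^R$ then fails, forcing an asymmetric choice of the two integer parameters, which corresponds to allowing some $b_i$ in the middle range to exceed $1$ rather than equal it. A short case analysis on $\min(a_{k+1}, a_{k+2})$, together with the observation that the integer solutions of the Laplace recurrence on each side form a coset of $(m+1)\ZZ$ whose representative depends on the parity of $m$, completes the verification.
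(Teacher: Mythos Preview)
Your case $r=2k+1$ is correct and is in fact the same construction as the paper's: unwinding your coefficients $c_{k+1\pm s}=\binom{m-s+1}{2}$ gives precisely the paper's $\beta=\sum_{t=1}^{m} t\bigl(\al_{k-m+t+1}+\cdots+\al_{k+m-t+1}\bigr)$.

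Your case $r=2k+2$, however, has a genuine gap. Within your two--parameter family (equality Laplace recurrence on each half, outer boundary $c^{L}_{m+1}=c^{R}_{m+1}=0$), the coefficients $b_i$ for $i\in[k-m+1,k]\cup[k+3,k+m+2]$ are \emph{forced} to equal $a_i+1$; only $b_{k+1},b_{k+2}$ depend on the two parameters. Writing the parameter on each side as $A=-m-u$ with $u\in\NN$ (this is exactly the non-negativity constraint $c_m\ge 0$), one computes
\[
b_{k+1}=a_{k+1}-(m+1)-u^{L}(m+2)+u^{R}(m+1),\qquad
b_{k+2}=a_{k+2}-(m+1)-u^{R}(m+2)+u^{L}(m+1).
\]
The sum condition $b_{k+1}+b_{k+2}\ge 2$ forces $u^{L}+u^{R}\le (a_{k+1}+a_{k+2})-(2m+2)$, so under the bare hypothesis $a_{k+1}+a_{k+2}=2m+3$ only $u^{L}+u^{R}\in\{0,1\}$ is available. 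These three choices cover precisely the cases $\{a_{k+1},a_{k+2}\ge m+1\}$, $\{a_{k+1}\ge 2m+3\}$, $\{a_{k+2}\ge 2m+3\}$. The pair $(a_{k+1},a_{k+2})=(2m+2,1)$ (with $m\ge 1$) satisfies the hypothesis but none of these, and indeed no integer point of your family is feasible there. Concretely, for $m=1$, $r=4$, $\lam=4\om_2+\om_3$: your ansatz forces $c^{L}_{0}=2c^{L}_{1}+1$, $c^{R}_{0}=2c^{R}_{1}+1$, and then $b_{3}\ge 1$ gives $2c^{L}_{1}-3c^{R}_{1}\ge 1$ while $b_{2}\ge 1$ gives $3c^{L}_{1}-2c^{R}_{1}\le 2$, which have no solution in $\NN^{2}$.

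The paper handles this by a preliminary one--sided shift: if $a_{k+2}\le m$ it subtracts $s\al_{k+1}+(s-1)\al_{k}+\cdots+\al_{k-s+2}$ with $s=m+1-a_{k+2}$, producing $\nu$ with $\nu_{k+1},\nu_{k+2}\ge m+1$, and then applies the symmetric construction. In the example above this gives $\gamma=2\al_2+\al_3$ and $\mu=2\om_1+\om_2+\om_3+\om_4$; note $b_1=a_1+2$, so this $\gamma$ lies \emph{outside} your equality--Laplace family (the second difference at $s=m$ on the left is $2$, not $1$). Your remark that ``allowing some $b_i$ to exceed $1$'' fixes things is misplaced: within your family the non-central $b_i$ are pinned at $a_i+1$ regardless of the parameters, so the needed extra freedom is not there. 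To repair your approach you must either relax the Laplace recurrence to an inequality on one side, or---equivalently and more cleanly---perform the paper's asymmetric shift first.
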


\begin{proof}
1) First we consider the case $r = 2k+1$. 
For any $j$ between $2$ and $k$, notice that 
$$\al_{j} + \al_{j+1} + \cdots + \al_{2k+2-j}
 = -\om_{j-1} + \om_j + \om_{2k+2-j} - \om_{2k+3-j}.$$
Hence, choosing 
$$\begin{array}{rl}
  \beta := &   
  m\al_{k+1} + (m-1)(\al_{k} + \al_{k+1} + \al_{k+2})\\ 
  & + (m-2)(\al_{k-1} + \al_{k} + \al_{k+1} + \al_{k+2} + \al_{k+3})\\
 & + \cdots + 
   1 \cdot (\al_{k-m+2} + \al_{k-m+3} + \cdots + \al_{k+m-1} + \al_{k+m}),
 \end{array}$$
we have $\beta = (2m+1)\om_{k+1}-\sum^{k+m+1}_{i=k-m+1}\om_i$, and so 
$\mu := \lam-\beta$ has the desired properties (as $k+m+1 = r-k+m$).  

2) Now we consider the case $r = 2k+2$. First suppose that 
$a_{k+1},a_{k+2} \geq m+1$. For any $j$ between $2$ and $k+1$, notice that 
$$\al_{j} + \al_{j+1} + \cdots + \al_{2k+3-j}
 = -\om_{j-1} + \om_j + \om_{2k+3-j} - \om_{2k+4-j}.$$
Hence, choosing 
$$\begin{array}{rl}
  \beta := &   
  m(\al_{k+1}+\al_{k+2}) + (m-1)(\al_{k} + \al_{k+1} + \al_{k+2} + \al_{k+3})\\ 
  & + (m-2)(\al_{k-1} + \al_{k} + \al_{k+1} + \al_{k+2} + \al_{k+3} + \al_{k+4})\\
 & + \cdots + 
   1 \cdot (\al_{k-m+2} + \al_{k-m+3} + \cdots + \al_{k+m} + \al_{k+m+1}),
 \end{array}$$
we have $\beta = (m+1)(\om_{k+1}+\om_{k+2})-\sum^{k+m+2}_{i=k-m+1}\om_i$, and so 
$\mu := \lam-\beta$ has the desired properties (as $k+m+2 = r-k+m$).  

Finally, assume that $a_{k+2} \leq m$ for instance. Setting 
$s := m+1-a_{k+2} \geq 1$, for  
$$\nu := 
  \lam - s\al_{k+1}-(s-1)\al_{k} - \cdots - 1 \cdot \al_{k-s+2} = 
  \sum^r_{i=1}c_i\om_i$$ 
we have $\nu \in \LP$, $\lam \succ \nu$, $c_{k+2} = m+1$, and 
$c_{k+1} = (a_{k+1}+a_{k+2}) -(m+2) \geq m+1$.
Now we can apply the previous case to $\nu$. 
\end{proof}

\begin{propo}\label{m-good}
{\sl Let $\lam = \sum^{r}_{i=1}a_i\om_i \in \LP$ and $1 \leq m \leq k$. 
Assume that 
$$[\lam] \geq \left\{ \begin{array}{rl}
  2m(k+1)+2k+1, & \mbox{if }r = 2k+1,\\
  (2m+2)(k+1)+2k+1, & \mbox{if }r = 2k+2. \end{array} \right.$$ 
Then there exists a dominant weight 
$\mu = \sum^{r}_{i=1}b_i\om_i$ with $\lam \succ \mu$ and 
$b_i > 0$ for $k-m+1 \leq i \leq r-k+m$.}
\end{propo}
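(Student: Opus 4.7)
The plan is to iterate Lemma~\ref{incr} and its mirror counterpart to push weight toward the middle of the Dynkin diagram, and then invoke Lemma~\ref{middle}. The diagram of type $A_r$ carries an involutive automorphism $\om_i \leftrightarrow \om_{r+1-i}$ preserving both the functional $[\cdot]$ and the order $\succ$; applying Lemma~\ref{incr} to the reversed weight yields the following mirror form: if $\sum_{i=1}^m i\, a_{r+1-i} > m$ with $1 \le m \le k$, then there exists $\mu = \sum b_i\om_i \in \LP$ with $\lam \succ \mu$, $[\mu] = [\lam]$, $b_i = a_i$ for $i \le r-m-1$, and $b_{r-m} > a_{r-m}$. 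I would record this mirror form explicitly before using it.

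Given $\lam$ satisfying the hypothesis, I choose, among the finite non-empty set of $\mu = \sum b_i\om_i \in \LP$ with $\lam \succ \mu$ and $[\mu] = [\lam]$, one that maximizes $b_{k+1}$ when $r = 2k+1$, and $b_{k+1} + b_{k+2}$ when $r = 2k+2$. Maximality, combined with Lemma~\ref{incr} at $m = k$ and its mirror form, forces both $\sum_{i=1}^k i b_i \le k$ and $\sum_{i=1}^k i b_{r+1-i} \le k$, since otherwise a further application would strictly increase the quantity being maximized without leaving the set.

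Next I verify that $\mu$ fulfills the hypothesis of Lemma~\ref{middle}. Suppose not: then $b_{k+1} \le 2m$ (if $r = 2k+1$) or $b_{k+1} + b_{k+2} \le 2m+2$ (if $r = 2k+2$). Decomposing $[\mu]$ into the middle terms (each with coefficient $k+1$) and the outer terms (coefficients $1, 2, \ldots, k$ on each side), the two inequalities above bound the outer contribution by $2k$, while the middle contribution is at most $2m(k+1)$ or $(2m+2)(k+1)$, respectively. Each resulting bound on $[\mu]$ falls short of the assumed lower bound on $[\lam] = [\mu]$ by exactly one, contradicting the hypothesis.

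Lemma~\ref{middle} then applies to $\mu$ and produces a dominant $\nu$ with $\mu \succ \nu$ and the required positivity of coefficients $c_i$ for $k-m+1 \le i \le r-k+m$; transitivity of $\succ$ gives $\lam \succ \nu$, completing the argument. The only real subtlety is the existence of the extremal $\mu$, which is immediate because the condition $[\mu] = [\lam]$ restricts $\mu$ to a finite set of dominant weights; the numerical hypotheses in the two cases are tuned to be sharp against precisely the extremal configuration described above, so no further slack is needed.
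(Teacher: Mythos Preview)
Your proof is correct and follows essentially the same strategy as the paper's: both arguments pivot on maximizing the central coefficient(s) subject to $[\mu]=[\lambda]$ and $\mu\preceq\lambda$, then use Lemma~\ref{incr} to bound the outer contributions and force the hypothesis of Lemma~\ref{middle}. The only organizational difference is that the paper frames this as a contradiction (choose a counterexample with maximal $t(\lambda)$, then use one side via ``by symmetry''), whereas you argue directly by choosing the extremal $\mu$ and bounding both outer sums simultaneously using the mirror form; this is marginally cleaner but not a genuinely different route.
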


\begin{proof}
Assume the contrary. Among all counterexamples 
$\lam = \sum^r_{i=1}a_i\om_i$ with given 
$N = [\lam]$ (there are only finitely many dominant weights 
$\delta$ with $[\delta] = N$), choose one with {\it largest possible} 
$t(\lam)$, where $t(\lam) := a_{k+1}$ if $r = 2k+1$ and
$t(\lam) := a_{k+1}+a_{k+2}$ if $r = 2k+2$. Since $\lam$ is a 
counterexample, there is no dominant weight $\mu \prec \lam$ with the 
prescribed properties. Hence by Lemma \ref{middle}, 
$t(\lam) \leq 2m$ if $r = 2k+1$, and $t(\lam) \leq 2m+2$ if $r = 2k+2$.
Observe that 
$$[\lam] = \sum^{k}_{i=1}ia_i + \sum^{r}_{i=r-k+1}(r+1-i)a_i + (k+1)t(\lam).$$
It follows that $\sum^{k}_{i=1}ia_i + \sum^{r}_{i=r-k+1}(r+1-i)a_i \geq 2k+1$. 
By symmetry we may assume that 
$\sum^k_{i=1}ia_i > k$. By Lemma \ref{incr} (in which we now set $m := k$), 
there is some $\nu = \sum^r_{i=1}c_i\om_i \in \LP$ with $[\nu] = [\lam] = N$,
$\lam \succ \nu$, $c_{k+1} > a_{k+1}$, and $c_i = a_i$ for $i \geq k+2$;
in particular, $t(\nu) > t(\lam)$. By the choice of $\lam$, 
$\nu$ cannot be a counterexample, which means that there exists 
a dominant weight $\mu = \sum^{r}_{i=1}b_i\om_i$ with $\nu \succ \mu$ and 
$b_i > 0$ for $k-m+1 \leq i \leq r-k+m$. But in this case
$\lam \succ \mu$, a contradiction.   
\end{proof}

We will need a variant of Proposition \ref{m-good} for $m = 0$;

\begin{lemma}\label{middle2}
{\sl Let $\lam = \sum^{r}_{i=1}a_i\om_i \in \LP$ with 
$[\lam] \geq 2k+1$. Then there exists a dominant weight 
$\mu = \sum^{r}_{i=1}b_i\om_i$ with $[\lam] = [\mu]$, $\lam \succ \mu$, and 
$b_i > 0$ for some 
$i \in \{ k+1,r-k\}$.}
\end{lemma}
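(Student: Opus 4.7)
The plan is to bootstrap this lemma from Lemma~\ref{incr} by choosing $m = k$. First I would observe that if $a_j > 0$ for some $j \in \{k+1, r-k\}$, then $\mu := \lam$ already works. So I may reduce to the case where $a_{k+1} = 0$ (if $r = 2k+1$) or $a_{k+1} = a_{k+2} = 0$ (if $r = 2k+2$). In this case the middle coefficients contribute $0$ to $[\lam]$, since $[\al_i] = 0$ outside $\{k+1, r-k\}$; what remains is
$$\sum_{i=1}^{k} i a_i + \sum_{i=r-k+1}^{r} (r+1-i) a_i \;\geq\; 2k+1.$$

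Since $k + (k+1) = 2k+1$, at least one of these two sums must be $\geq k+1$. Here I would invoke the Dynkin diagram automorphism $\sigma\colon i \mapsto r+1-i$ of type $A_r$, which interchanges the two sums and preserves both the bracket $[\cdot]$ (since $\min(i,r+1-i)$ is $\sigma$-invariant) and the partial order $\succ$ (since $\sigma(\al_i) = \al_{r+1-i}$). Replacing $\lam$ by $\sigma(\lam)$ if necessary, I may assume
$$\sum_{i=1}^{k} i a_i \;\geq\; k+1 \;>\; k.$$
Lemma~\ref{incr} with $m = k$ then produces a dominant weight $\mu = \sum b_i\om_i$ with $[\mu] = [\lam]$, $\lam \succ \mu$, $b_i = a_i$ for all $i \geq k+2$, and $b_{k+1} > a_{k+1} = 0$. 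Since $k+1 \in \{k+1, r-k\}$, this $\mu$ has the required property; if $\sigma$ was applied, one pulls $\mu$ back by $\sigma$ to obtain a dominant weight with $b_{r-k} > 0$ instead, which is equally good.

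There is essentially no serious obstacle: the statement is a one-step corollary of Lemma~\ref{incr} once one recognizes that the numerical threshold $2k+1$ is precisely chosen so that, after symmetrizing, the hypothesis $\sum_{i=1}^{k} i a_i > k$ of Lemma~\ref{incr} is forced. The only points requiring a bit of care are the routine verification that the diagram symmetry $\sigma$ preserves $[\cdot]$ and $\succ$, and the bookkeeping distinction between the odd case (where $\{k+1, r-k\}$ is a singleton) and the even case (where it has two elements).
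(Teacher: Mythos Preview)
Your proof is correct and follows essentially the same route as the paper: reduce to the case where the middle coefficients vanish, use the symmetry $i \leftrightarrow r+1-i$ to force $\sum_{i=1}^{k} i a_i > k$, and then invoke Lemma~\ref{incr} with $m=k$. The paper's proof is simply a terser version of yours, handling only the even case $r=2k+2$ explicitly and saying ``by symmetry'' where you spell out the diagram automorphism.
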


\begin{proof}
Consider the case $r = 2k+2$ for instance and assume that 
$a_{k+1} = a_{k+2} = 0$. Since $[\lam] \geq 2k+1$, by symmetry we may assume
that $\sum^{k}_{i=1}ia_i \geq k+1$. Now we can apply Lemma \ref{incr} with
$m := k$. 
\end{proof}
 
By Lemma \ref{orb}, if all the cofficients $b_i$ of the 
weight $\mu = \sum^r_{i=1}b_i\om_i \in \LP$ are {\it positive}, then the
$W$-orbit of $\mu$ has length $(r+1)!$. We will call any such dominant
weight {\it good}. 

\begin{corol}\label{good}
{\sl Let $\lam = \sum^{r}_{i=1}a_i\om_i \in \LP$. Assume that 
$[\lam] \geq (r^2+2r-2)/2$. Then there is a good weight 
$\mu \in \LP$ with $\lam \succ \mu$.}
\end{corol}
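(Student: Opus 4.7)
The plan is to apply Proposition \ref{m-good} with the largest admissible parameter, namely $m=k$. With this choice the range $k-m+1\le i\le r-k+m$ collapses to $1\le i\le r$, so the dominant weight $\mu=\sum b_i\om_i\prec \lam$ that Proposition \ref{m-good} produces automatically satisfies $b_i>0$ for all $i$, i.e.\ is good in the sense defined just before the corollary. Thus the entire content of the corollary is the arithmetic statement that the uniform bound $(r^2+2r-2)/2$ implies the two case-dependent bounds in Proposition \ref{m-good} when $m=k$.

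To carry out the arithmetic check, I would split on the parity of $r$. If $r=2k+1$, the bound in Proposition \ref{m-good} specialized to $m=k$ is $[\lam]\geq 2k(k+1)+2k+1=2k^2+4k+1$, while $(r^2+2r-2)/2=2k^2+4k+\tfrac12$; because $[\lam]$ is an integer, the hypothesis $[\lam]\geq(r^2+2r-2)/2$ forces $[\lam]\geq 2k^2+4k+1$, as required. If $r=2k+2$, the bound becomes $[\lam]\geq(2k+2)(k+1)+2k+1=2k^2+6k+3$, and a direct computation shows $(r^2+2r-2)/2=2k^2+6k+3$, so the hypothesis matches the needed bound exactly. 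In either case Proposition \ref{m-good} delivers a good $\mu\prec\lam$.

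There is one routine issue: Proposition \ref{m-good} requires $1\le m\le k$, so the argument above applies only for $r\ge 3$. The small cases $r=1$ and $r=2$ must be handled by hand. For $r=1$ the hypothesis $[\lam]\geq 1/2$ just says $a_1\geq 1$, so $\lam$ itself is already good (take $\mu=\lam$). For $r=2$ the hypothesis says $a_1+a_2\geq 3$; if both $a_i\geq 1$ then $\mu=\lam$ works, and otherwise (say $a_2=0$, $a_1\geq 3$) a short analysis based on $a_1\bmod 3$ lets one choose $\mu=b_1\om_1+b_2\om_2$ with $b_1,b_2\in\{1,2\}$ and $\lam-\mu$ a nonnegative integral combination of $\al_1,\al_2$.

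I do not expect a genuine obstacle here: the corollary is essentially a repackaging of Proposition \ref{m-good} with the optimal choice $m=k$, so the only substantive work is the numerical verification that $(r^2+2r-2)/2$ is the common upper bound for the two case-dependent thresholds. The slightly fussy step is noticing that in the odd case the inequality is strict at the level of half-integers and must be upgraded to the next integer; this is immediate because $[\lam]\in\ZZ$ by its definition.
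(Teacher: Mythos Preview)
Your proposal is correct and follows exactly the paper's approach: the paper's entire proof is the one line ``Apply Proposition \ref{m-good} with $m=k$.'' Your arithmetic verification that the threshold $(r^2+2r-2)/2$ dominates both case-dependent bounds in Proposition \ref{m-good} (using integrality of $[\lam]$ in the odd case) is exactly the computation needed, and your explicit treatment of $r\le 2$ (where $k=0$ and the proposition is unavailable) is a point the paper glosses over; for $r=2$ with $a_2=0$, $a_1\ge 3$, the simplest choice is $\mu=\lam-\al_1=(a_1-2)\om_1+\om_2$, avoiding the mod-$3$ analysis.
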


\begin{proof}
Apply Proposition \ref{m-good} with $m = k$.
\end{proof}

Corollary \ref{good} yields the following exponential (in $r$) upper bound
for $R_n$:
 
\begin{corol}\label{A-exp}
{\sl Assume $n < (r+1)!$. Then for 
$\GC = \SL_{r+1}(\FF)$ we have 
$$R_{n}(\GC) < f_1(r) := \frac{(r+1)^{4}}{8} 
  e^{2\pi \sqrt{\frac{r^{2}}{6} + \frac{r}{3} - \frac{1}{2}}}.$$
Moreover, $R_n(\GC) \leq n^{3.8}$ if $1 \leq r \leq 10$ in addition.}
\end{corol}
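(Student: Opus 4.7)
The plan is to combine Corollary \ref{good} with Theorem \ref{premet} to rule out large values of $[\lam]$, and then use the generating function bound of Lemma \ref{part} to count the remaining weights.

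First I would observe that for any restricted $\lam \in \LP$ counted in $R_n(\GC)$, we must have $[\lam] < (r^2+2r-2)/2$. Indeed, if $[\lam] \geq (r^2+2r-2)/2$, Corollary \ref{good} yields a good weight $\mu$ (all coefficients positive) with $\lam \succ \mu$. By Theorem \ref{premet}, $\mu$ is a weight of $L(\lam)$, so the full $W$-orbit of $\mu$ consists of weights of $L(\lam)$, and by Lemma \ref{orb} this orbit has length $|W| = (r+1)!$. Hence $\dim L(\lam) \geq (r+1)! > n$, a contradiction.

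Next I would translate the condition $[\lam] \leq N$ into a count. Since $[\lam] = \sum_i \min(i,r+1-i) a_i$, the number of $\lam \in \LP$ with $[\lam] = s$ is exactly $k(r,s)$, so
$$R_n(\GC) \leq \sum_{s=0}^{N} k(r,s),$$
where $N$ is the largest integer with $N < (r^2+2r-2)/2$. In either parity of $r$ one checks $N+2 \leq (r+1)^2/2$ and $N \leq (r^2+2r-3)/2$. Applying Lemma \ref{part} and these two estimates gives
$$R_n(\GC) < \frac{(N+1)(N+2)}{2}e^{2\pi\sqrt{N/3}} \leq \frac{(r+1)^4}{8}\, e^{2\pi\sqrt{r^2/6+r/3-1/2}} = f_1(r),$$
which is the first claim.

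For the second claim, I would handle the range $1 \leq r \leq 10$ by a case-split on $n$. For each such $r$, determine a threshold $n_0(r)$ with $n_0(r)^{3.8} \geq f_1(r)$; whenever $n \geq n_0(r)$ the bound $R_n(\GC) \leq f_1(r) \leq n^{3.8}$ is immediate. For $n < n_0(r)$ I would fall back on Lübeck's explicit tables \cite{Lu} of low-dimensional restricted representations of the relevant $\SL_{r+1}(\FF)$, which list all such representations below a dimension much larger than $n_0(r)$ for these small ranks, and directly verify $R_n(\GC) \leq n^{3.8}$. The main obstacle here is arithmetic rather than conceptual: $f_1(r)$ grows roughly like $e^{2\pi r/\sqrt{6}}$, so for $r = 10$ the threshold $n_0(10)$ is a few thousand, and one must make sure \cite{Lu} goes far enough; the crude $r$-dependence of $f_1$ is exactly why this argument is only used for bounded $r$, with Proposition \ref{A-large} taking over in the large range.
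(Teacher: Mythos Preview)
Your argument is correct and matches the paper's proof almost verbatim: the first claim is obtained exactly as you describe (Corollary \ref{good} plus Theorem \ref{premet} force $[\lam]\le N=(r^2+2r-3)/2$, then Lemma \ref{part} gives $f_1(r)$), and the second is handled by combining L\"ubeck's tables for small $n$ with the bound $R_n\le f_1(r)$ for larger $n$. The only cosmetic differences are that the paper disposes of $r=1$ and $n\le r$ explicitly at the outset, and uses the tabulated cutoff $B(r)\ge (r+1)^4$ from \cite{Lu} rather than a computed threshold $n_0(r)$; note that for $r=10$ your $n_0(10)$ is closer to $10^4$ than ``a few thousand,'' but this is still comfortably below $B(10)\ge 11^4$.
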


\begin{proof}
The statements are obvious for $r = 1$ or $n \leq r$
(since $\SL_{r+1}(\FF)$ has no irreducible representations of degree between
$2$ and $r$, see e.g. \cite{Lu}), 
so we may assume that $r \geq 2$ and $n \geq r+1$. 
For the first statement, 
we count the total number of restricted dominant weights
$\lam = \sum^r_{i=1}a_i\om_i$ with $\dim L(\lam) < (r+1)!$. Since 
the $W$-orbit of any good weight has length $(r+1)!$, this bound implies 
that $L(\lam)$ does not afford any good weight. Applying 
Premet's Theorem \ref{premet} and Corollary \ref{good}, we see
that $[\lam] \leq N := (r^2+2r-3)/2$. Clearly, the number of $\lam$ 
satisfying this condition is at most $\sum^{N}_{s=0}k(r,s)$, so
the first statement follows from Lemma \ref{part}. 

Now assume that $2 \leq r \leq 10$. Then the restricted irreducible 
representations of $\GC$ up to a certain degree $B(r) \geq (r+1)^4$ are listed in 
\cite{Lu} (as well as online on Frank L\"ubeck's website), 
and the total number of these representations is 
less than $(r+1)^{2.7}$. In particular, we are done if 
$n \leq B(r)$ (since $n \geq r+1$), or if $2 \leq r \leq 5$ 
(since $B(r) > (r+1)!$ for these $r$). Suppose that $6 \leq r \leq 10$ and
$B(r) < n < (r+1)!$. In this case, one can check that
$f_1(r) \leq B(r)^{3.8}$, whence $R_n \leq n^{3.8}$.         
\end{proof}

\subsection{The mid-range: $ \binom{r+1}{k+1} \leq n < (r+1)!$} 
Here we aim to prove the bound $R_n < n^{3.8}$ in the mid-range, that is, for
$$(r+1)! > n \geq d_1 := \binom{r+1}{k+1},$$
where $k = \lfloor (r-1)/2 \rfloor$ as before.
Notice that $d_1 \geq 2^{r+1}/(r+1)$. By Corollary \ref{A-exp},
$R_n \leq f_{1}(r)$, and direct computation shows that
$f_1(r) < (d_1)^{3.8}$ when $r \geq 730$. In particular, 
$R_n < n^{3.8}$ in the mid-range for $r \geq 730$; also we are done 
by Corollary \ref{A-exp} if $r \leq 10$. So we will now assume that 
$11 \leq r < 730$.

\bigskip
{\bf Case 1: $19 \leq r < 730$.} We set $m := \lfloor r/6 \rfloor$ 
(in particular $m \geq 3$), and distinguish
two subcases: {\bf (1a)}, where for some $\lam$ with 
$\dim L(\lam) \leq n$, there is a dominant weight 
$\mu = \sum^{r}_{i=1}b_i\om_i$ such that $\lam \succ \mu$ and $b_i > 0$ whenever
$k-m+1 \leq i \leq r-k+m$, and {\bf (1b)} otherwise.

\smallskip
Suppose we are in the subcase {\bf (1a)}. By Theorem \ref{premet}, $\mu$ is a 
weight in $L(\lam)$. By Lemma \ref{orb}, the stabilizer of
$\mu$ in $W$ is contained in the Young subgroup generated by the reflections
along the simple roots $\al_i$ with $1 \leq i \leq k-m$ and 
$\al_j$ with $r-k+m+1 \leq j \leq r$, which is isomorphic to
$\SSS_{k-m+1} \times \SSS_{k-m+1}$. It follows that
$$\dim L(\lam) \geq \frac{(r+1)!}{((k-m+1)!)^2}
    = \left( \prod^{k-m+1}_{i=1} \frac{k-m+1+i}{i}\right)
      \cdot \prod^{r+1}_{j=2k-2m+3}j.$$      
Since $k-m+1 \geq 5$, it is easy to see that 
$$\prod^{k-m+1}_{i=1} \dfrac{k-m+1+i}{i} > 2^{k-m+2} \geq 2^{r-(2k+1)+2m} 
  \geq 2^{2m},$$ 
and that $2j \geq r+2$ for $2k-2m+3 \leq j \leq r+1$. Hence 
$$n \geq \dim L(\lam) > (r+2)^{2m} = (r+2)^{2\lfloor r/6 \rfloor} =: d_2.$$ 
Since $r \geq 19$, one can check that $f_1(r) < (d_2)^{3.76}$ and so
$R_n < n^{3.76}$.

\medskip
Next suppose that we are in the subcase {\bf (1b)}. It then follows by
Proposition \ref{m-good} (for $m = \lfloor r/6 \rfloor$) that 
$$[\lam] \leq \left\{ \begin{array}{ll}
  2m(k+1)+2k \leq N := (r^{2}-1)/6+r, & \mbox{if }r = 2k+1,\\
  (2m+2)(k+1)+2k \leq N:= r^{2}/6+2r-2 , & \mbox{if }r = 2k+2. 
  \end{array} \right.$$ 
whenever $\dim L(\lam) \leq n$. Applying Lemma \ref{part} we obtain 
$$R_{n} \leq \sum^{N}_{s=0}k(r,s) < f_2(r) := \left\{ \begin{array}{ll}
  \dfrac{1}{2} \cdot 
  \left(\dfrac{r^2+11}{6}+r\right)^{2} \cdot 
  e^{2\pi \sqrt{\frac{r^{2}-1}{18} + \frac{r}{3}}}, & \mbox{if }r = 2k+1,\\
  \dfrac{1}{2} \cdot 
  \left(\dfrac{r^2}{6}+2r\right)^{2} \cdot 
  e^{2\pi \sqrt{\frac{r^{2}}{18} + \frac{2r-2}{3}}}, & \mbox{if }r = 2k+2.
  \end{array} \right.$$
For $r \geq 19$, one can check that 
$f_2(r) < (d_1)^{3.76}$ and so $R_n < n^{3.76}$.

\bigskip
{\bf Case 2: $11 \leq r < 19$.} Now we set $m := 2$ and 
consider two subcases: {\bf (2a)}, where for some $\lam$ with 
$\dim L(\lam) \leq n$, there is a dominant weight 
$\mu = \sum^{r}_{i=1}b_i\om_i$ such that $\lam \succ \mu$ and $b_i > 0$ whenever
$k-m+1 \leq i \leq r-k+m$, and {\bf (2b)} otherwise.

\smallskip
Suppose we are in the subcase {\bf (2a)}. By Theorem \ref{premet}, $\mu$ is a 
weight in $L(\lam)$. By Lemma \ref{orb}, the stabilizer of
$\mu$ in $W$ is contained in the Young subgroup generated by the reflections
along the simple roots $\al_i$ with $1 \leq i \leq k-2$ and 
$\al_j$ with $r-k+3 \leq j \leq r$, which is isomorphic to
$\SSS_{k-1} \times \SSS_{k-1}$. Since $k \geq 5$, it follows that
$$n \geq \dim L(\lam) \geq \frac{(r+1)!}{((k-1)!)^2} =: d_3.$$
As $11 \leq r \leq 19$, one readily checks that $f_1(r) < (d_3)^{2.9}$ and so
$R_n < n^{2.9}$.

\medskip
Next suppose that we are in the subcase {\bf (2b)}. It then follows by
Proposition \ref{m-good} (for $m = 2$) that 
$$[\lam] \leq \left\{ \begin{array}{ll}
  6k+4 = 3r+1, & \mbox{if }r = 2k+1,\\
  8k+6 = 4r-2, & \mbox{if }r = 2k+2, 
  \end{array} \right.$$ 
whenever $\dim L(\lam) \leq n$. Applying Lemma \ref{part} we obtain 
$$R_{n} \leq \sum^{4r-2}_{s=0}k(r,s) < f_3(r) 
  := 8r^2 \cdot 
  e^{2\pi \sqrt{(4r-2)/3}}.$$
For $11 \leq r \leq 18$ and $n \geq (r+1)^4$, one can check that 
$R_n \leq f_3(r) < n^{3.29}$. On the other hand, if $n < (r+1)^4$ (and 
$r \leq 20$), all the restricted irreducible representations of $\GC$ are 
listed on L\"ubeck's website, from which one can also see that 
$R_n < (d_1)^{3.29} \leq n^{3.29}$.

\medskip
Thus we have proved

\begin{propo}\label{A-middle}
{\sl Assume 
$d_1 := \binom{r+1}{k+1}\leq n < (r+1)!$. 
Then for $\GC = \SL_{r+1}(\FF)$ we have $R_n(\GC) \leq n^{3.8}$.
\hfill $\Box$}
\end{propo}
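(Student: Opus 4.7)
The plan is to partition the range of $r$ into three bands and, for the intermediate band, run a dichotomy on the weight structure of $L(\lam)$ that pits a long $W$-orbit argument against a bound on $[\lam]$.

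\textbf{Extreme $r$.} Corollary \ref{A-exp} already gives $R_n \leq n^{3.8}$ for $1 \leq r \leq 10$, so I would reduce to $r \geq 11$. At the opposite extreme, the identity $d_1 = \binom{r+1}{k+1} \geq 2^{r+1}/(r+1)$ makes $d_1^{3.8}$ grow roughly like $2^{3.8r}/r^{3.8}$, while $f_1(r)$ grows only like $r^4 e^{O(r)}$ with a comparatively small constant in the exponent. A direct numerical check (with a breakpoint around $r \simeq 730$) shows $f_1(r) < d_1^{3.8}$ once $r$ is large enough, and the uniform bound $R_n \leq f_1(r)$ from Corollary \ref{A-exp} then closes the proof for $r \geq 730$.

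\textbf{Intermediate $r$, dichotomy.} For $11 \leq r < 730$ I would introduce a parameter $m$ (to be tuned), set $k = \lfloor (r-1)/2 \rfloor$, and for each restricted $\lam$ with $\dim L(\lam) \leq n$ distinguish two subcases according to whether $L(\lam)$ contains a dominant weight $\mu = \sum_i b_i \om_i$ with $b_i > 0$ for every $k-m+1 \leq i \leq r-k+m$. In the first subcase, Lemma \ref{orb} confines the stabilizer of $\mu$ to the Young subgroup $\SSS_{k-m+1} \times \SSS_{k-m+1}$, which together with Theorem \ref{premet} yields
\[
\dim L(\lam) \geq \frac{(r+1)!}{((k-m+1)!)^2},
\]
pinning $n$ from below so that the uniform estimate $R_n \leq f_1(r)$ of Corollary \ref{A-exp} falls under $n^{3.8}$. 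In the second subcase no such $\mu$ exists; the contrapositive of Proposition \ref{m-good} then forces a linear-in-$r^2$ upper bound on $[\lam]$, and Lemma \ref{part} converts this into a partition-function estimate for the number of admissible $\lam$.

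\textbf{Tuning and main obstacle.} The delicate point is that $m$ must be chosen as a function of $r$ so that both horns of the dichotomy lie below $n^{3.8}$ (equivalently below $d_1^{3.8}$). Decreasing $m$ tightens the cap on $[\lam]$ in the second subcase but weakens the factorial lower bound on $\dim L(\lam)$ in the first. I would take $m = \lfloor r/6 \rfloor$ for $19 \leq r < 730$: the first subcase then yields $n > (r+2)^{2\lfloor r/6 \rfloor}$, and the second a partition-sum bound of order $r^2 e^{2\pi \sqrt{r^2/18 + O(r)}}$, each verifiably below $d_1^{3.8}$ in that range. For $11 \leq r \leq 18$ the choice $m = 2$ works better, yielding the lower bound $(r+1)!/((k-1)!)^2$ in the first subcase and $[\lam] \leq 4r-2$ in the second; for the small residual range $n < (r+1)^4$ within this band I would appeal to L\"ubeck's explicit tables to verify the inequality directly. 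The main obstacle is exactly this balancing---confirming that both the $f_1(r)$-bound and the partition-sum bound uniformly undercut $n^{3.8}$ across the whole mid-range requires an attentive choice of the cutoffs $r = 19$ and $r = 730$ and the sharp estimates $d_1 \geq 2^{r+1}/(r+1)$ together with the $O(r^2)$ bound on $[\lam]$ from Proposition \ref{m-good}.
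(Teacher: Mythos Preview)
Your proposal is correct and follows essentially the same approach as the paper: the same reduction to $11 \leq r < 730$ via Corollary \ref{A-exp} and the estimate $d_1 \geq 2^{r+1}/(r+1)$, the same dichotomy on the existence of a ``central block'' of positive coefficients in a subweight $\mu \prec \lam$, the same parameter choices $m = \lfloor r/6 \rfloor$ for $19 \leq r < 730$ and $m = 2$ for $11 \leq r \leq 18$, and the same appeal to L\"ubeck's tables for the residual range $n < (r+1)^4$.
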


\subsection{The small range: $n < \binom{r+1}{k+1}$}
For $n < d_1 = \binom{r+1}{k+1}$ 
and among all restricted dominant weights $\lam$ with
$\dim L(\lam) \leq n$, we choose $\lam = \lam_0 = \sum^{r}_{i=1}a_i\om_i$ 
with largest possible $[\lam_0]$. Observe that 
$$[\lam_0] \leq 2k.$$
For, assume the contrary. Then by Lemma \ref{middle2} and Theorem 
\ref{premet}, the module $L(\lam_0)$ contains a dominant weight 
$\mu = \sum^{r}_{i=1}b_i\om_i$ with $b_i > 0$ for some 
$i \in \{ k+1,r-k \}$. By Lemma
\ref{orb}, the stabilizer of $\mu$ in $W$ is contained in the 
Young subgroup $\SSS_{i} \times \SSS_{r+1-i}$ generated by the reflections 
along the simple roots $\al_j$ with $j \neq i$ which has index $d_1$ 
in $W = \SSS_{r+1}$. Thus 
$n \geq \dim L(\lam_0) \geq d_1$, contrary to our assumptions. 

Now we set $m := \lfloor ([\lam_0]+1)/2 \rfloor$; in particular,
$0 \leq m \leq k$ and $2m \geq [\lam_0] \geq 2m-1$. By the choice of 
$\lam_0$, we have 
\begin{equation}\label{small1}
  [\lam] \leq 2m
\end{equation}
for all restricted $\lam$ with $\dim L(\lam) \leq n$. If $m = 0$,
then $n=1$ and $R_n = n$. We will therefore assume that $m \geq 1$. 
We claim
that the module $L(\lam_0)$ admits a dominant weight 
$\mu = \sum^{r}_{i=1}b_i\om_i$ with $b_j > 0$ for some $j$ in the range 
$m \leq j \leq r+1-m$. (Indeed, we are done if $a_j > 0$ for some such 
index $j$. Otherwise we must have $m \geq 2$ and 
$a_{m} = a_{m+1} = \cdots = a_{r+1-m} = 0$. Since 
$[\lam_0] \geq 2m-1$, by symmetry we may assume that 
$\sum^{m-1}_{i=1}ia_i \geq m$. Applying Lemma \ref{incr} to $\lam_0$, we obtain
a dominant weight $\mu = \sum^{r}_{i=1}b_i\om_i \prec \lam_0$ with 
$b_{m} > a_{m} = 0$, as desired.) Now by Lemma \ref{orb} the stabilizer of
$\mu$ in $W$ is contained in the Young subgroup $\SSS_{j} \times \SSS_{r+1-j}$ 
generated by the reflections along the simple roots $\al_i$ with $i \neq j$,
which has index $\binom{r+1}{j}$.
Recall that $m \leq j \leq r+1-m$ and $1 \leq m \leq k \leq (r-1)/2$. 
It follows that
$$\binom{r+1}j\geq 
  \binom{r+1}m\geq 2^{m+1}.$$
Thus we have shown that    
\begin{equation}\label{small2}
  n \geq \dim L(\lam_0) \geq  
  \binom{r+1}{m}\geq 2^{m+1} =: d_4.
\end{equation}

Now (\ref{small1}) and Lemma \ref{part} imply that
$$R_n \leq \sum^{2m}_{s=0}k(r,s) < f_{4}(m) := 
  2(m+1)^{2} \cdot e^{2\pi\sqrt{2m/3}}.$$
Since $m \leq \log_{2}n-1$ because of (\ref{small2}), we obtain the following
sublinear bound for $R_n$ in the small range $n < d_1$:
$$R_n < 2(\log_{2}n)^{2} \cdot 
  e^{2\pi\sqrt{2(\log_{2}n-1)/3}}.$$
If $m$ is large enough, say $m \geq 80$, then one can check
that $f_4(m) < d_4$, whence $R_n < n$. Direct computation also reveals that
when $m \geq 6$ we have $f_4(m) < (d_4)^{3.8}$ and so $R_n < n^{3.8}$. By 
Corollary \ref{A-exp}, the same is true if $r \leq 10$. 

Assume $r \geq 11$. If $m = 5$, then (\ref{small2}) also implies 
$n \geq  \binom{12}5 = 792$,
and so $R_n \leq f_4(5) < n^{2.44}$. Similarly, for $1 \leq m \leq 4$ 
by (\ref{small2}) we have 
$n \geq  \binom{12}m$,
and so $R_n \leq f_4(m) < n^{3.11}$. Thus we have proved

\begin{propo}\label{A-small}
{\sl Assume 
$1 \leq n < d_1 = \binom{r+1}{k+1}$. 
Then for $\GC = \SL_{r+1}(\FF)$ we have $R_n(\GC) \leq n^{3.8}$. 
In fact we also have
$$R_n < 2(\log_{2}n)^{2} \cdot 
  e^{2\pi\sqrt{2(\log_{2}n-1)/3}}.$$
\hfill $\Box$}
\end{propo}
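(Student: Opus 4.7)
The plan is an extremal-weight argument paired with a counting estimate via Lemma \ref{part}. Among all restricted dominant weights $\lam$ with $\dim L(\lam) \leq n$, I would pick $\lam_0 = \sum a_i \om_i$ maximizing $[\lam_0]$, and first show that $[\lam_0] \leq 2k$. For otherwise $[\lam_0] \geq 2k+1$, and Lemma \ref{middle2} produces a dominant $\mu$ with $\lam_0 \succ \mu$ having some positive coordinate $b_i$ at $i \in \{k+1, r-k\}$; Lemma \ref{orb} then places its $W$-stabilizer inside a Young subgroup of index $\binom{r+1}{i} = d_1$, so Theorem \ref{premet} forces $\dim L(\lam_0) \geq d_1 > n$, a contradiction.

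Set $m := \lfloor ([\lam_0]+1)/2 \rfloor$, so $0 \leq m \leq k$ and $2m \geq [\lam_0] \geq 2m-1$, and by the maximality of $[\lam_0]$ every $\lam$ counted by $R_n$ satisfies $[\lam] \leq 2m$. The case $m = 0$ forces $n = 1$ and the bounds are trivial, so I may assume $m \geq 1$. The next step is to exhibit a dominant $\mu = \sum b_i \om_i$ with $\mu = \lam_0$ or $\lam_0 \succ \mu$, having $b_j > 0$ for some $j$ in the central range $m \leq j \leq r+1-m$. If $\lam_0$ itself has $a_j > 0$ for some such $j$, take $\mu = \lam_0$; otherwise we necessarily have $m \geq 2$ with $a_m = \cdots = a_{r+1-m} = 0$, and from $[\lam_0] \geq 2m-1$ a symmetry argument yields $\sum_{i=1}^{m-1} i a_i \geq m$, so Lemma \ref{incr} produces the desired $\mu$ with $b_m > 0$. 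Applying Lemma \ref{orb} and Theorem \ref{premet} to this $\mu$ gives the key bound $n \geq \dim L(\lam_0) \geq \binom{r+1}{j} \geq \binom{r+1}{m} \geq 2^{m+1}$.

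The counting half is then immediate: by Lemma \ref{part},
$$R_n \leq \sum_{s=0}^{2m} k(r,s) < 2(m+1)^2 \, e^{2\pi\sqrt{2m/3}} =: f_4(m),$$
and substituting $m \leq \log_2 n - 1$ yields the stated sublinear estimate in $n$. For the polynomial bound $R_n \leq n^{3.8}$ I would then run a short case analysis: for large $m$ (concretely $m \geq 6$) the inequality $f_4(m) < 2^{3.8(m+1)}$ is routine, giving $R_n < n^{3.8}$ via $n \geq 2^{m+1}$; for $r \leq 10$ the bound already follows from Corollary \ref{A-exp}; and for $r \geq 11$ with $m \leq 5$ the sharper lower bound $n \geq \binom{r+1}{m}$ (rather than the crude $2^{m+1}$) easily dominates $f_4(m)$, each value of $m \in \{1,\ldots,5\}$ being checked separately.

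The only subtle point is producing the central weight $\mu$ when $\lam_0$ has no positive coordinate in the central range; this is where Lemma \ref{incr} does the real work, transporting mass inward along the roots while preserving both the order $\succ$ and the invariant $[\cdot]$. Everything else is numerical bookkeeping, essentially an interpolation between the coarse partition-counting bound $f_4(m)$ and the sharp binomial dimension lower bound $\binom{r+1}{m}$.
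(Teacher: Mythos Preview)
Your proposal is correct and follows essentially the same argument as the paper: the extremal choice of $\lam_0$, the bound $[\lam_0]\le 2k$ via Lemma~\ref{middle2}, the definition of $m$, the production of a central weight $\mu$ via Lemma~\ref{incr}, the dimension bound $n\ge \binom{r+1}{m}\ge 2^{m+1}$, the counting bound $R_n<f_4(m)$ from Lemma~\ref{part}, and the final case split ($m\ge 6$; $r\le 10$ via Corollary~\ref{A-exp}; $r\ge 11$ with $1\le m\le 5$ using $n\ge\binom{12}{m}$) all match the paper exactly.
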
   
 
\subsection{The general case}
Combining Propositions \ref{A-large}, \ref{A-middle}, and \ref{A-small}, we 
get:

\begin{theor}\label{mainA}
{\sl The number $R_{n}(\GC)$ of restricted irreducible 
$\FF\GC$-representations of dimension $\leq n$ of $\GC = \SL_{r+1}(\FF)$ is at most
$n^{3.8}$.
\hfill $\Box$}
\end{theor}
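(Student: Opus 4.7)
The plan is to split the argument into three regimes based on the size of $n$ relative to the rank $r$, and apply in each the corresponding bound on $R_n(\GC)$ already established. The natural thresholds are $(r+1)!$ (the order of the Weyl group $W \cong \SSS_{r+1}$, hence the size of a generic $W$-orbit on $\LP$) and $\binom{r+1}{k+1}$ (the smallest index of a proper maximal parabolic subgroup of $W$), where $k = \lfloor (r-1)/2 \rfloor$. These two values partition the positive integers into the large, mid, and small ranges treated by Propositions \ref{A-large}, \ref{A-middle}, and \ref{A-small} respectively.

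In the large range $n \geq (r+1)!$, Proposition \ref{A-large} yields the sharper bound $R_n(\GC) \leq n^{3.4}/r^3$, which trivially implies $R_n(\GC) \leq n^{3.8}$ since $r \geq 1$ and $3.4 < 3.8$. In the mid-range $\binom{r+1}{k+1} \leq n < (r+1)!$, Proposition \ref{A-middle} gives $R_n(\GC) \leq n^{3.8}$ directly. In the small range $1 \leq n < \binom{r+1}{k+1}$, Proposition \ref{A-small} again gives $R_n(\GC) \leq n^{3.8}$ (together with a strictly stronger sublinear-type bound in terms of $\log_2 n$). Combining the three cases yields the uniform bound claimed in the theorem.

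The substantive work has already been absorbed into the three propositions, so the only obstacle at the assembly stage is verifying that the stated ranges in fact exhaust all $n \geq 1$ without gap, and that $n^{3.8}$ dominates each individual bound. Both are immediate: the ranges are a clean disjoint cover by construction, and the exponent $3.8$ was chosen precisely so that the mid-range bound (which is the tightest of the three) fits with equality. Once this is observed, no further case analysis on $r$ is needed since each of the three propositions is already valid for all $r \geq 1$, with small ranks absorbed into Corollary \ref{A-exp} and the tables of L\"ubeck used during the proofs of Propositions \ref{A-middle} and \ref{A-small}.
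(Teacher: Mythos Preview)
Your proposal is correct and follows exactly the paper's own approach: the theorem is obtained by combining Propositions \ref{A-large}, \ref{A-middle}, and \ref{A-small}, which cover the large, mid, and small ranges of $n$ respectively, each yielding $R_n(\GC) \leq n^{3.8}$. Your added remarks about the ranges forming a disjoint cover and the small-rank cases being absorbed via Corollary \ref{A-exp} are accurate elaborations of what the paper leaves implicit.
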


For future reference, we prove the following strengthening of
Theorem \ref{mainA} in the case $r = 5$. 

\begin{propo}\label{A5}
{\sl For $\GC = \SL_{6}(\FF)$ we have $R_n(\GC) \leq n^{2.5}$.}
\end{propo}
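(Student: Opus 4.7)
The plan is to adapt the three-range decomposition used to prove Theorem \ref{mainA}, specialized to $r=5$, so that $k=2$, $d_1 = \binom{6}{3} = 20$, and $(r+1)! = 720$, and to sharpen each estimate so as to replace the exponent $3.8$ by $2.5$.

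For the large range $n \geq 720$, apply the key inequality \eqref{bound2} from the proof of Proposition \ref{A-large} with $r=5$, which reads
$$R_n \leq 2^5 \cdot d \cdot (1+\log d)^4 = 32\,d\,(1+\log d)^4,$$
with $d = 1 + (n-1)/6$. Since the right-hand side grows only like $n(\log n)^4$, it is eventually dominated by $n^{2.5}$. A direct numerical check at the boundary $n = 720$ (where the bound yields roughly $4.4 \times 10^6$ while $720^{2.5} \approx 1.4 \times 10^7$), combined with the fact that the ratio $n^{2.5}/(32\,d\,(1+\log d)^4)$ is monotonically increasing for $n \geq 720$, extends $R_n \leq n^{2.5}$ to the entire large range.

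For the remaining range $1 \leq n < 720$, use the explicit enumeration of restricted irreducible $\SL_6(\FF)$-representations from \cite{Lu}. Since $r = 5 \leq 10$, the proof of Corollary \ref{A-exp} records that there are fewer than $6^{2.7} < 113$ such representations of dimension at most $B(5)$, with $B(5) \geq (r+1)^4 = 1296 > 720$. The inequality $R_n \leq n^{2.5}$ can then be verified directly from L\"ubeck's list at each dimension $n < 720$ where $R_n$ jumps; since $\SL_6(\FF)$ has only the trivial representation for $n \leq 5$, and the list of low-dimensional restricted modules (the natural module, its dual, exterior and symmetric squares, and a few more) is short, this check is elementary.

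The main obstacle is the finite numerical verification in the small-to-mid range: for large $n$ the logarithmic growth of the bound makes the inequality nearly automatic, while for $n < 720$ one must explicitly consult L\"ubeck's tables at each jump point of $R_n$. No theoretical ingredient beyond those already used in the proofs of Propositions \ref{A-large}, \ref{A-middle}, \ref{A-small} and Corollary \ref{A-exp} is needed.
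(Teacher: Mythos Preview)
Your argument is correct and in fact more economical than the paper's own proof. Both proofs handle small $n$ via L\"ubeck's tables and large $n$ via the bound \eqref{bound2}, namely $R_n \le 32\,d\,(1+\log d)^4$ with $d = 1+(n-1)/6$; the difference lies in the threshold at which \eqref{bound2} takes over. The paper only invokes $32\,d\,(1+\log d)^4 < n^{2.5}$ for $n \ge 57{,}750$, and then treats the residual range $2500 < n < 57{,}750$ by a separate weight-theoretic argument: taking $\lambda_0$ with $[\lambda_0] = N$ maximal among the $\lambda$ with $\dim L(\lambda) \le n$, it shows that either $N \le 76$ (so $R_n$ is bounded by an explicit lattice-point count, about $2.4\times 10^6 < 2500^{2.5}$), or $N \ge 77$, in which case a chain of dominance reductions via Lemma~\ref{incr} and Theorem~\ref{premet} produces $3^5$ distinct good weights below $\lambda_0$ and forces $\dim L(\lambda_0) \ge 3^5 \cdot 720 > 57{,}750$. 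You bypass this entire middle range by noting that $32\,d\,(1+\log d)^4 < n^{2.5}$ already holds at $n = 720$ and persists, since
\[
\frac{d}{dn}\log\frac{n^{2.5}}{32\,d\,(1+\log d)^4} \;=\; \frac{2.5}{n} - \frac{1}{n+5}\Bigl(1 + \frac{4}{1+\log d}\Bigr) \;>\; 0
\]
for $n \ge 720$ (the subtracted term is at most $1.7/(n+5)$ once $d \ge 120.8$). Your route is shorter; the paper's middle-range argument, while superfluous for the bare inequality, does illustrate the good-weight machinery concretely. One minor slip: $6^{2.7} \approx 126$, not $<113$, but this is harmless since $R_n \le 126 < n^{2.5}$ already for $n \ge 7$, and $n \le 6$ is immediate.
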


\begin{proof}
1) Using \cite{Lu}, one readily checks that $R_n \leq n$ for $n \leq 2500$.
Also, setting $d := 1 +(n-1)/6$, by (\ref{bound2}) we have 
$$R_n \leq f(d) := 2^5 \cdot d \cdot (1+\log d)^{4}.$$
Since $f(d) < n^{2.5}$ for $n \geq 57,750$, we may now assume that 
$2500 < n < 57,750$. 
Among all restricted $\lam$ with $\dim L(\lam) \leq n$,
choose $\lam_0$ with largest possible $[\lam_0] =:N$. 

Assume that $N \leq 76$. In this case, $R_n$ is at most the total number
of $5$-tuples $(x_1,x_2,x_3,x_4,x_5) \in \NN^5$ with 
$x_1+2x_2+3x_3+2x_4+x_5 \leq 76$, which is $2,415,231$, stricly less 
than $2500^{2.5} < n^{2.5}$. 

2) We may now assume that $N \geq 77$. First we claim that, for any dominant 
weight $\lam = \sum^{5}_{i=1}a_i\om_i$ with $[\lam] \geq 77$, there exists a 
dominant weight $\mu = \sum^{5}_{i=1}b_i\om_i$ with $\lam \succ \mu$ and 
$b_3 \geq 25$. Assuming the contrary, among all the counterexamples
$\lam$ with given $L := [\lam]$, fix one with largest possible $a_3$. 
Then $a_3 \leq 24$. Since 
$$[\lam] = (a_1+2a_2) + 3a_3 +(2a_4+a_5) \geq 77,$$
by symmetry we may assume that $a_1+2a_2 \geq 3$. Applying Lemma \ref{incr}
with $m = k = 2$ to $\lam$, we obtain a dominant weight 
$\nu = \sum^{5}_{i=1}c_i\om_i$ with $[\lam] = [\nu]$ and $c_3 > a_3$. By
the choice of $\lam$, $\nu$ cannot be a counterexample. Hence we can
find $\mu = \sum^{5}_{i=1}b_i\om_i$ with $\nu \succ \mu$ and 
$b_3 \geq 25$. But in this case we also have $\lam \succ \mu$, a contradiction.

Applying the aforementioned claim to $\lam_0$, we get 
$\mu = \sum^{5}_{i=1}b_i\om_i$ with $\lam_0 \succ \mu$ and 
$b_3 \geq 25$. Consider 
$$\beta := \al_2 +3\al_3 + \al_4 = 5\om_3 - \sum^{5}_{i=1}\om_i.$$
Then $\gam := \mu-5\beta = \sum^{5}_{i=1}c_i\om_i$ has all $c_i \geq 5$.
Now, for any $5$-tuple $(d_1,d_2,d_3,d_4,d_5) \in \ZZ^{5}$ with
$0 \leq d_i \leq 2$, notice that for  
$$\delta := \sum^{5}_{i=1}d_i\al_i = \sum^{5}_{i=1}e_i\om_i$$
we have $e_i = -d_{i-1}+2d_i-d_{i+1} \leq 4$ (where we set $d_0 = d_6 = 0$).
It follows that 
$\gam-\delta = \sum^{5}_{i=1}(c_i-e_i)\om_i$ is a good weight. Clearly,
$\lam_0 \succ \mu \succ \gam \succ \gam-\delta$. Thus for any $\delta$,
$\gam-\delta$ is a good weight which occurs in $L(\lam_0)$ by
Theorem \ref{premet}. As mentioned
before, the $W$-orbit of any good weight has length
$=|W| = 720$. Thus $n \geq \dim L(\lam_0) \geq 3^5 \cdot 720 = 174,960$, 
a contradiction.     
\end{proof}

\section{Characteristic $2$ case}

We will need the following well-known general fact:

\begin{lemma}\label{fixed}
{\sl Let $\GC$ be a simple algebraic group with a root system $\Phi$ with respect to 
a fixed maximal torus $\TC$. Let $\PC$ be a (proper) parabolic subgroup
of $\GC$, with unipotent radical $\QC$ and Levi subgroup $\LC$.
Assume that $g \in \GC$ normalizes $\LC$ but not $\PC$. Assume in addition
that either $\PC$ is a maximal parabolic subgroup, or 
$g \in N_{\GC}(\TC)$ induces a map sending the set of positive roots $\Phi^{+}$ to 
the set of negative roots $\Phi^{-}$. Then 

{\rm (i)} $\langle \QC,\QC^{g} \rangle = \GC$;

{\rm (ii)} If $V$ is any nontrivial, finite-dimensional, irreducible $\GC$-module,
then $\dim V^{\QC} \leq (\dim V)/2$.}
\end{lemma}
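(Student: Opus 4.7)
The plan is to show that under either hypothesis $g$ conjugates $\PC$ to the opposite parabolic $\PC^{-}$; once that is established, both parts follow quickly. Since $g$ normalizes $\LC$, $\PC^{g} = \LC \cdot \QC^{g}$ is again a parabolic with Levi factor $\LC$. In the first case, when $\PC$ is maximal, there are exactly two parabolic subgroups of $\GC$ having $\LC$ as Levi factor, namely $\PC$ and its opposite $\PC^{-}$ (the connected center of $\LC$ modulo $Z(\GC)^{\circ}$ is one-dimensional, so the Weyl chambers in the relevant cocharacter lattice form exactly two half-lines). Since $g$ does not normalize $\PC$, necessarily $\PC^{g} = \PC^{-}$ and so $\QC^{g} = \QC^{-}$. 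In the second case, $g \in N_{\GC}(\TC)$ acts on root subgroups by $U_{\alpha} \mapsto U_{g \cdot \alpha}$; because $g$ preserves $\Phi_{\LC}$ (as it normalizes $\LC$) and sends $\Phi^{+}$ to $\Phi^{-}$, it maps the set of roots of $\QC$, which is $\Phi^{+} \setminus \Phi_{\LC}$, bijectively onto $\Phi^{-} \setminus \Phi_{\LC}$, the set of roots of $\QC^{-}$; hence again $\QC^{g} = \QC^{-}$.

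For part (i), with $\QC^{g} = \QC^{-}$ in hand, I would invoke the standard fact that in a simple algebraic group the unipotent radical of a proper parabolic and the unipotent radical of its opposite together generate $\GC$. Indeed $\langle \QC, \QC^{-} \rangle$ contains $U_{\alpha}$ for every $\alpha \in \Phi \setminus \Phi_{\LC}$; using Chevalley commutator relations together with the irreducibility of $\Phi$ (forced by simplicity of $\GC$), one recovers $U_{\beta}$ for every $\beta \in \Phi$, and $\GC$ is generated by its root subgroups.

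For part (ii), since $\QC$ is a connected unipotent group acting on the finite-dimensional space $V$, the Lie--Kolchin theorem guarantees $V^{\QC} \neq 0$. Via the given representation, $g \in \GC$ acts on $V$ as an invertible linear operator, and one checks directly that $V^{\QC^{g}}$ is the $g^{-1}$-translate of $V^{\QC}$; in particular these two subspaces have the same dimension. The intersection $V^{\QC} \cap V^{\QC^{g}}$ is fixed pointwise by both $\QC$ and $\QC^{g}$, and hence by the subgroup $\langle \QC, \QC^{g} \rangle$, which by part (i) is all of $\GC$. Since $V$ is a nontrivial irreducible $\GC$-module, this common fixed space must vanish, so $\dim V^{\QC} + \dim V^{\QC^{g}} \leq \dim V$, yielding $\dim V^{\QC} \leq (\dim V)/2$.

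The only nontrivial inputs are the identification of the two parabolics sharing a given maximal-parabolic Levi and the generation statement $\langle \QC, \QC^{-} \rangle = \GC$; both are classical results from the Borel--Tits theory of reductive groups, so I do not expect any serious obstacle and the real work is merely to assemble these ingredients.
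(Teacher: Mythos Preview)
Your argument is correct, and part~(ii) is essentially identical to the paper's. For part~(i), however, you take a genuinely different route. You first identify $\QC^{g}$ with the opposite unipotent radical $\QC^{-}$ (via the classification of parabolics with a given maximal Levi in the first case, and by a direct root computation in the second), and then invoke the classical fact $\langle \QC,\QC^{-}\rangle=\GC$. The paper never identifies $\QC^{g}$ explicitly; instead it sets $\RC=\langle\QC,\QC^{g}\rangle$ and observes that $\LC=\LC^{g}$ normalizes $\RC$, so $N_{\GC}(\RC)$ contains both $\PC=\LC\QC$ and $\PC^{g}=\LC\QC^{g}$. When $\PC$ is maximal this already forces $N_{\GC}(\RC)=\GC$; when $g$ sends $\Phi^{+}$ to $\Phi^{-}$, the normalizer contains the Borel $B^{+}\subset\PC$ and its opposite $B^{-}=(B^{+})^{g}\subset\PC^{g}$, hence again equals $\GC$. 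Either way $\RC\lhd\GC$ is positive-dimensional, so $\RC=\GC$ by simplicity. The paper's approach is slightly more self-contained (no appeal to the count of parabolics sharing a Levi, and no need to push through commutator relations to establish $\langle\QC,\QC^{-}\rangle=\GC$), while yours makes the geometry of the situation---that $g$ swaps $\PC$ with its opposite---more transparent.
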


\begin{proof}
(i) Set $\RC := \langle \QC,\QC^g \rangle$. Then $\RC$ is normalized by 
$\LC = \LC^g$ and
by itself. It follows that $N_{\GC}(\RC)$ contains $\PC$ and $\PC^g$; in particular
it is a parabolic subgroup of $\GC$. Now if $\PC$ is a maximal parabolic subgroup,
we see that $N_{\GC}(\RC) = \GC$ as $\PC^g \neq \PC$. Assume $g \in N_{\GC}(\TC)$ 
sends $\Phi^{+}$ to $\Phi^{-}$. Then $g$ sends the Borel subgroup
$\langle \TC,U_\al \mid \al \in \Phi^{+}\rangle$ of $\PC$ to the opposite
Borel subgroup $\langle \TC,U_\al \mid \al \in \Phi^{-}\rangle$. Since 
$\GC = \langle \TC,U_\al \mid \al \in \Phi \rangle$, we also have
$N_{\GC}(\RC) = \GC$ in this case. Thus in either case $\RC \lhd \GC$, and $\RC$ has 
positive dimension. By simplicity of $\GC$, $\RC = \GC$.   

(ii) Clearly, every vector in $U := V^{\QC} \cap g(V^{\QC}) = V^{\QC} \cap V^{\QC^g}$ 
is fixed by both $\QC$ and $\QC^{g}$, and so it is fixed by 
$\langle \QC,\QC^g \rangle = \GC$. Since $V$ is irreducible and nontrivial, 
$U = 0$, and the claim follows. 
\end{proof}

Now for characteristic $2$ we can prove a much better bound for $R_n(\GC)$:

\begin{theor}\label{mainA2}
{\sl Let $\GC$ be a simple algebraic group over an algebraically closed field 
$\FF$ of characteristic $2$. Then $R_n(\GC) \leq n$.}
\end{theor}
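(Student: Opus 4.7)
The plan is to induct on the rank $r$ of $\GC$. The base case $r=1$ is immediate: $\SL_2(\FF)$ has only the trivial and natural restricted representations, of dimensions $1$ and $2$. For the inductive step, with $\GC$ of rank $r\ge 2$, the idea is to choose a proper parabolic $\PC=\LC\QC$ of $\GC$ to which Lemma \ref{fixed} applies. For every nontrivial restricted $\lam$ the highest-weight subspace $L_\GC(\lam)^\QC$ is the irreducible $\LC$-module with highest weight $\lam$, so Lemma \ref{fixed}(ii) yields
\[\dim L_\GC(\lam)\;\ge\;2\dim L_\LC(\lam).\]

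Ideally one chooses $\PC$ so that the semisimple part $\LC'$ of $\LC$ is a simple group of rank $r-1$; that is, $\PC$ is the maximal parabolic omitting an end node $\al_i$ of the Dynkin diagram. Since $p=2$, the restricted weights of $\GC$ are indexed by $\{0,1\}^r$, and forgetting the coordinate $a_i$ sends them two-to-one onto the restricted weights $\{0,1\}^{r-1}$ of $\LC'$. For a restricted $\mu\ne 0$ of $\LC'$, both preimages $\lam^{(0)}$ and $\lam^{(1)}$ are nontrivial in $\GC$, and the displayed bound gives each dimension at least $2\dim L_{\LC'}(\mu)$; for $\mu=0$, the preimages are the trivial rep (dimension $1$) and $L(\om_i)$ (of dimension $\ge 2$, again by the lemma). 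Hence for $n\ge 2$,
\[R_n(\GC)\;\le\;1+[n\ge\dim L(\om_i)]+2\bigl(R_{n/2}(\LC')-1\bigr)\;\le\;2R_{n/2}(\LC'),\]
and the inductive hypothesis $R_{n/2}(\LC')\le n/2$ gives $R_n(\GC)\le n$; the cases $n<2$ are trivial.

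The main work, and the principal obstacle, is producing such a $\PC$ for each type of $\GC$. For every simple type other than $A_r$, the Dynkin diagram has an end node fixed by the opposition involution $\iota=-w_0$, so a representative of $w_0$ normalizes the corresponding end-node Levi while sending $\Phi^+$ to $\Phi^-$, and Case (B) of Lemma \ref{fixed} applies. The delicate case is $A_r$: for $\SL_{r+1}$, the block sizes $i$ and $r+1-i$ of any standard maximal Levi are unequal unless $r$ is odd and $i=(r+1)/2$, so no outer swap element of $\GC$ exists; and since $\iota(i)=r+1-i$ fixes no end node, $w_0$ never normalizes an end-node Levi. For $r$ odd one can fall back on the middle parabolic $\PC_{(r+1)/2}$ whose Levi $\SL_{(r+1)/2}\times\SL_{(r+1)/2}$ admits a swap (Case (A)) but is a product, forcing the counting to be extended to products of smaller-rank groups. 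For $r$ even no maximal parabolic of $\SL_{r+1}$ satisfies either hypothesis of Lemma \ref{fixed} at all; the likely fix is to use the non-maximal parabolic omitting the $\iota$-invariant pair $\{\al_1,\al_r\}$ together with $w_0$ (Case (B)), followed by a sharper argument that absorbs the factor of $4$ coming from the four preimages of each $\LC'$-weight---or alternatively a direct argument specific to type $A$ in characteristic $2$.
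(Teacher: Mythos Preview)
Your reduction via Lemma~\ref{fixed} and Smith's theorem is sound, and it is exactly what the paper does for types $C_r$ and $D_r$ with $r\ge 9$ (the exceptional and small-rank groups being handled by the trivial bound $R_n\le 2^r\le n$ for $n\ge 256$ together with L\"ubeck's tables for $n<256$). The genuine gap is the one you yourself flag: type $A_r$. Neither of your suggested repairs closes it. For even $r$, removing the $\iota$-stable pair $\{\al_1,\al_r\}$ gives a $4$-to-$1$ map to restricted weights of $A_{r-2}$, while Lemma~\ref{fixed} still only halves the dimension; you obtain at best $R_n(\GC)\le 4R_{n/2}(A_{r-2})\le 2n$, and there is no second factor of $2$ available. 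For odd $r$, the middle Levi is the product $\SL_{(r+1)/2}\times\SL_{(r+1)/2}$, and the bound $R_m\le m$ is simply false for products (already $R_2(\SL_2\times\SL_2)=3$ in characteristic~$2$), so extending the induction to products does not help. Moreover you cannot dodge type $A$ by staying within each classical family: your $D_r$ chain terminates at $D_3=A_3$, and the unique $\iota$-fixed end node of $E_6$ yields Levi $A_5$.

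The paper settles type $A$ first, by a direct combinatorial argument using no parabolic at all. Among restricted $\lam$ with $\dim L(\lam)\le n$, choose $\lam_0$ whose zero-set $J_0=\{i:a_i=0\}$ has minimal size $m$. By Lemma~\ref{orb} the $W$-stabilizer of $\lam_0$ is the Young subgroup on $J_0$, of order at most $(m+1)!$, so already the $W$-orbit of $\lam_0$ alone gives
\[
n\ \ge\ \dim L(\lam_0)\ \ge\ \frac{(r+1)!}{(m+1)!}.
\]
Every $\lam$ counted by $R_n$ then has at least $m$ zero coefficients, whence
\[
R_n\ \le\ \sum_{j=m}^{r}\binom{r}{j}\ \le\ \frac{(r+1)!}{(m+1)!}\ \le\ n,
\]
the middle inequality being an easy induction on $r-m$. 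With type $A$ in hand, the paper reduces $C_r$ and $D_r$ to $A_{r-1}$ via the maximal parabolic stabilizing a maximal isotropic subspace, using the swap $e_i\leftrightarrow f_i$ as the element $g$ of Lemma~\ref{fixed}; this is essentially your inductive step, but with the Levi chosen to land immediately in the already-settled type~$A$.
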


\begin{proof}
1) First we consider the case $\GC$ is of type $A_r$.
Since $\Char(\FF) = 2$, there is a bijection $\lam \leftrightarrow J$ 
between the restricted dominant weights $\lam = \sum^{r}_{i=1}a_i\om_i$
and the sets $J = J(\lam) = \{ i \mid a_i = 0\}$. Now among all restricted 
$\lam$ with $\dim L(\lam) \leq n$, choose $\lam_0$ with 
$J_0 = J(\lam_0)$ of {\it smallest possible}
cardinality $m$; in particular, $0 \leq m \leq r$. Again by Lemma \ref{orb},
the stabilizer of $\lam_0$ in $W$ is the Young subgroup $\SSS_{J_0}$ 
generated by the reflections along the simple roots $\al_j$ with $j \in J_0$. 
It is easy to see that $(a+1)! \cdot (b+1)! \leq (a+b+1)!$ for non-negative 
integers $a,b$. In turn, this inequality implies that 
$|\SSS_{J_0}| \leq |\SSS_{m+1}| = (m+1)!$, whence
$$n \geq \dim L(\lam_0) \geq \frac{(r+1)!}{(m+1)!}.$$
By the choice of $\lam_0$, we see that $|J(\lam)| \geq m$ for all 
$\lam$ with $\dim L(\lam) \leq n$. It follows
that 
$$R_n \leq  \binom rr  + \binom r{r-1} + \cdots + \binom r{m+1} + \binom rm.$$
An easy induction on $r-m$ shows that the right-hand-side of the last 
inequality cannot exceed $\dfrac{(r+1)!}{(m+1)!}$.   

\medskip
2) In general, if $\GC$ is of bounded rank $r \leq 8$, then using 
\cite{Lu} for $n < 256$ and the obvious bound $R_n \leq 2^r$ for $n \geq 256$, 
we are done. 
So it remains to consider the case $\GC$ is of type $C_r$ or $D_r$ with 
$r \geq 9$, and with no loss we may assume $n > 2$. 
In either case, $\GC$ has a maximal parabolic subgroup with 
unipotent radical $\QC$ and Levi subgroup $\LC$ of 
type $A_{r-1}$, corresponding to, say, the simple root $\al_r$. Indeed, one can 
choose a hyperbolic basis $(e_1, \ldots ,e_r,f_1, \ldots ,f_r)$ of the natural module
$\FF^{2r}$ for $\GC$ and choose $\PC$ to be the stabilizer in $\GC$ of the maximal totally 
isotropic (singular in the case of type $D_r$) subspace 
$\langle e_1, \ldots ,e_r \rangle_{\FF}$. It is clear that there is an 
element $g \in \GC$ which sends each $e_i$ to
$f_i$ and vice versa. Thus $g$ normalizes $\LC$ but not $\PC$.     
Consider any restricted irreducible $\FF\GC$-module  
$V = L(\lam)$ with highest weight $\lam = \sum^r_{i=1}a_i\om_i$ and of dimension
$\leq n$. By Smith's Theorem
\cite{Sm}, the fixed-point subspace $V^{\QC}$ is an irreducible $\LC$-module 
$L'(\lam')$ with highest weight $\lam' = \sum^{r-1}_{i=1}a_i\om'_i$ 
(if we choose the fundamental 
weights $\om'_1, \ldots,\om'_{r-1}$ accordingly), and by Lemma \ref{fixed}, 
$\dim V^{\QC} \leq (\dim V)/2$ if $\lam \neq 0$. By 1), we have at most $n/2-1$ 
nonzero restricted dominant weights $\lam'$ such that $\dim L'(\lam') \leq n/2$. 
Since $\lam$ is completely determined by $\lam'$ and $a_r \in \{ 0,1\}$, we conclude that 
$$R_n \leq 2 (\frac{n}{2}-1) + 2 = n,$$
where the second summand, $2$, counts the weights $\lam$ with $\lam' = 0$.       
\end{proof}

\section{Other simple algebraic groups in odd characteristic}

Let $p > 2 $ be a prime and let $\GC$ be a simple (simply connected) algebraic group 
of rank $r$, not of type $A_r$. We fix a maximal torus $\TC$ in $\GC$,  
the set $\Delta:= \{ \al_1, \ldots, \al_r\}$ of simple roots, the corresponding set 
$\{ \om_1, \ldots ,\om_r\}$ of fundamental weights, and the set 
$$\LP = \left\{ \sum^{r}_{i=1}a_i \om_i \mid a_i \in \ZZ,a_i \geq 0
  \right\}$$
of dominant weights. Also let $\Gamma$ denote the Dynkin diagram for $\GC$, with simple
roots as vertices. Then there is a {\it positive} root $\al_0 = \sum^{r}_{i=1}n_i\al_i$ 
such that $\{-\al_0,\al_1, \ldots ,\al_r\}$ is the set of vertices for the 
extended Dynkin diagram $\Gamma^{(1)}$. Observe that, since $\GC$ is not
of type $A_r$, there is a simple root $\al_j$ such that $\al_0$ is connected only
to $\al_j$ in $\Gamma^{(1)}$. We will consider the following subsystem subgroup
$$\HC := \langle X_{\al_i},X_{-\al_i} \mid 0 \leq i \leq r,~i \neq j\rangle,$$
where as usual $X_{\beta}$ is the root subgroup corresponding to the root $\beta$.
Our observation implies that $\HC$ is the direct product $\HC_0 \times \HC_1$ of semisimple 
subgroups $\HC_0$ of type $A_1$ with simple root system $\{\al_0\}$, and 
$\HC_1$ with simple root system $\Delta_1 := \Delta \setminus \{\al_j\}$. 
We can decompose $\TC = \TC_0 \times \TC_1$, where $\TC_0$ is a maximal torus in $\HC_0$ and 
$\TC_1$ is a maximal torus in $\HC_1$. Then, without loss of generality,
we may identify the set of fundamental weights of $\HC_1$ with 
$\{\om_i \mid 1 \leq i \leq r,~i \neq j\}$. Let $\om_0$ denote the (unique) fundamental
weight for $\HC_0$.

\begin{lemma}\label{reduction}
{\sl Let $V = L(\om)$ be any restricted irreducible $\FF\GC$-module with highest weight
$\om = \sum^r_{i=1}a_i\om_i$. Then the following statements hold.

{\rm (i)} The $\HC$-module $V$ contains a simple subquotient 
isomorphic to $U_0 \otimes U_1$, where $U_0$ is the irreducible $\HC_0$-module with
highest weight $(\sum^r_{i=1}n_ia_i)\om_0$ and $U_1$ is the restricted 
irreducible $\HC_1$-module with highest weight $\sum_{1 \leq i \leq r,~i \neq j}a_i\om_i$.

{\rm (ii)} If in addition $n_j$ is coprime to $p$, then
$V$ is uniquely determined by the simple module $U_1$ and 
the remainder $m$ modulo $p$ of $\sum^r_{i=1}n_ia_i$. {\rm (In what follows
we will choose $m$ between $0$ and $p-1$.)}

{\rm (iii)} Assume $n_i \geq 1$ for all $i$ and $V$ is nontrivial. Then $\dim U_0 \geq m+1$
if $1 \leq m \leq p-1$ and $\dim U_0 \geq 2$ if $m = 0$.}
\end{lemma}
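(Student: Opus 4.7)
The plan is to deduce all three parts from an analysis of the $\HC$-cyclic submodule of $V$ generated by a highest weight vector.

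First I would fix a Borel subgroup of $\HC$ whose positive simple roots are $\{\al_0\} \cup \{\al_i \mid 1 \leq i \leq r,\ i \neq j\}$; since these already lie in $\Phi^{+}$ for $\GC$, the highest weight vector $v_\om \in V$ is annihilated by every positive root subgroup of $\HC$ and hence is an $\HC$-highest weight vector of $\TC$-weight $\om$. Because $\HC = \HC_0 \times \HC_1$ shares the maximal torus $\TC = \TC_0 \times \TC_1$ with $\GC$, any simple $\HC$-module of highest weight $\om$ factors as an external tensor product $U_0 \otimes U_1$, with $U_i$ the simple $\HC_i$-module of highest weight $\om|_{\TC_i}$. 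Thus the cyclic $\HC$-submodule $\HC \cdot v_\om \subseteq V$ has $U_0 \otimes U_1$ as its unique simple quotient, and so $V$ contains $U_0 \otimes U_1$ as a simple subquotient. A direct computation of $\om|_{\TC_i}$ using the expansions of $\al_0^\vee$ and of the $\al_k^\vee$ ($k \neq j$) in the simple coroots of $\GC$ matches the stated highest weights $(\sum_i n_i a_i)\om_0$ for $U_0$ and $\sum_{i \neq j} a_i \om_i$ for $U_1$.

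For part (ii), knowledge of $U_1$ recovers all coefficients $a_i$ with $i \neq j$; the residue $m \equiv \sum_i n_i a_i \pmod p$ then yields the congruence $n_j a_j \equiv m - \sum_{i \neq j} n_i a_i \pmod p$, and the coprimality $\gcd(n_j, p) = 1$ together with the restricted range $0 \leq a_j \leq p - 1$ pin $a_j$ down uniquely. Hence $(U_1, m)$ determines $\om$ and therefore $V$.

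For part (iii), I would invoke Steinberg's tensor product theorem on the $A_1$-module $U_0 = L_{\HC_0}(N \om_0)$ with $N := \sum_i n_i a_i$. Writing $N = \sum_{k \geq 0} m_k p^k$ in base $p$ (so $m_0 = m$), we have $U_0 \cong \bigotimes_{k \geq 0} L(m_k)^{[k]}$ and $\dim U_0 = \prod_k (m_k + 1)$. When $1 \leq m \leq p - 1$ this already gives $\dim U_0 \geq m_0 + 1 = m + 1$. When $m = 0$, nontriviality of $V$ forces some $a_i \geq 1$, and the hypothesis $n_i \geq 1$ then yields $N \geq 1$; together with $p \mid N$ this forces $N \geq p$, so some $m_k$ with $k \geq 1$ is positive and $\dim U_0 \geq m_k + 1 \geq 2$.

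The main technical point is the identification of the $\HC_0$-highest weight as $(\sum_i n_i a_i)\om_0$, which rests on careful bookkeeping of how $\al_0^\vee$ expands in the simple coroots of $\GC$ and where the coefficients $n_i$ enter; once this is settled, parts (ii) and (iii) are routine consequences of the arithmetic of the restricted range and of Steinberg's theorem.
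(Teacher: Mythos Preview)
Your proposal is correct and follows essentially the same route as the paper: take a highest weight vector $v$, observe it is a maximal vector for the Borel of $\HC$ (since $\al_0$ and the $\al_i$ with $i\neq j$ are all positive roots of $\GC$), form the cyclic $\HC$-submodule and pass to its simple head $U_0\otimes U_1$, then read off the $\HC_0$- and $\HC_1$-highest weights from the $\TC$-weight $\om$; parts (ii) and (iii) are then the same arithmetic with the restricted range and Steinberg's tensor product theorem. One small imprecision: writing $\TC=\TC_0\times\TC_1$ is too strong in general (the product $\TC_0\TC_1\subseteq\TC$ can have finite index when the comark $m_j>1$), but this is harmless since all you actually use is the restriction of $\om$ to $\TC_0$ and to $\TC_1$ separately, exactly as the paper does.
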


\begin{proof}
Let $v \in V$ be a highest weight vector. Then $v$ is fixed by any positive root
subgroup $X_{\beta}$, and $\TC$ acts on $v$ via the weight $\om$. In particular,
$v$ is fixed by any positive root subgroup of $\HC_0$, resp. of $\HC_1$, and
$\TC_1$ acts on $v$ via the weight $\om' := \sum_{1 \leq i \leq r,~i \neq j}a_i\om_i$. Furthermore,
since $\langle \om_i,\al_j \rangle = \delta_{ij}$ and $\langle \om_0,\al_0 \rangle = 1$,
we have 
$$\langle \om,\al_0 \rangle = \sum^r_{i=1}n_ia_i = 
  \langle (\sum^r_{i=1}n_ia_i)\om_0,\al_0 \rangle,$$
whence $\TC_0$ acts on $v$ via the weight $\om'' := (\sum^r_{i=1}n_ia_i)\om_0$.
Now let $W$ be the $\FF$-span of $\HC v$ and let $U$ be a maximal $\HC$-submodule 
of $W$. Then $W/U$ is an irreducible $\HC$-module, with high weight vector $v +U$. 
Now we can identify $W/U$ with $U_0 \otimes U_1$, 
where the irreducible $\HC_0$-module $U_0$ has
highest weight $\om''$ and the irreducible $\HC_1$-module $U_1$ 
has highest weight $\om'$.

Clearly, knowing the highest weight $\om'$ of $U_1$ allows us to recover all the 
coefficients $a_i$ of $\om$ with $i \neq j$. If in addition $n_j$ is coprime to $p$,
then knowing $m$ allows us to recover the remaining coefficient $a_j$ since
$0 \leq a_j \leq p-1$. 

For (iii), observe that $\dim U_0 \geq \dim L(m\om_0) = m+1$ by Steinberg's tensor product
theorem. Assume furthermore that $m = 0$. Then the nontriviality of $V$ implies that
$a_k \geq 1$ for some $k$. But $n_i \geq 1$ for all $i$, so $\sum^r_{i=1}n_ia_i > 0$. 
Hence $\dim U_0 \geq 2$ again by Steinberg's tensor product theorem. 
\end{proof}  

The rest of this section is devoted to proving the following theorem:

\begin{theor}\label{main2}
{\sl Let $\GC$ be a simple (simply connected) algebraic group in characteristic $p > 2$, 
not of type $A$. Then
$$R_n(\GC) \leq n^{2.5}.$$}
\end{theor}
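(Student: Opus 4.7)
The plan is to prove the bound $R_n(\GC) \leq n^{2.5}$ by induction on the rank $r$ of $\GC$, with Lemma \ref{reduction} as the main inductive tool. For every non-$A$ type under consideration, all coefficients $n_i$ of the highest root $\al_0$ are positive, and the distinguished coefficient $n_j$ (indexing the unique simple root to which $-\al_0$ is attached in $\Gamma^{(1)}$) equals $2$; since $p > 2$, this coefficient is coprime to $p$, so Lemma \ref{reduction}(ii)--(iii) applies. The lemma then associates to every restricted irreducible $V = L(\om)$ with $\dim V \leq n$ a pair $(U_1, m)$ where $m \in \{0, 1, \ldots, p-1\}$ and $U_1$ is a restricted irreducible $\HC_1$-module; moreover the estimate $\dim V \geq \dim U_0 \cdot \dim U_1$ forces $\dim U_1 \leq n/(m+1)$ when $1 \leq m \leq p-1$ and $\dim U_1 \leq n/2$ when $m = 0$ and $V$ is nontrivial. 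Summing over these parameters yields the master inequality
\[
  R_n(\GC) \;\leq\; 1 + R_{\lfloor n/2 \rfloor}(\HC_1) + \sum_{m=1}^{\infty} R_{\lfloor n/(m+1) \rfloor}(\HC_1).
\]

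The structure of $\HC_1$ depends on the type of $\GC$: $\HC_1 = C_{r-1}$ for $\GC = C_r$; $\HC_1 = A_1 \times B_{r-2}$ for $\GC = B_r$; $\HC_1 = A_1 \times D_{r-2}$ for $\GC = D_r$; and $\HC_1$ is an explicit semisimple group of bounded rank in each exceptional case. For the infinite families I induct on $r$, using on the non-$A$ simple factor $\HC_1'$ of $\HC_1$ the bound $R_n(\HC_1') \leq n^{2.5}$ from the induction hypothesis, and on the $A_1$ factor (when present) the elementary estimate $R_n(A_1) \leq \min(n, p) \leq n$. Since the restricted irreducibles of a direct product decompose as tensor products, this yields
\[
  R_n(A_1 \times \HC_1') \;\leq\; \sum_{d=1}^{n} R_{\lfloor n/d \rfloor}(\HC_1') \;\leq\; \zeta(2.5)\, n^{2.5},
\]
while for $\HC_1 = C_{r-1}$ simple we directly have $R_n(\HC_1) \leq n^{2.5}$ by induction.

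Substituting these bounds into the master inequality and using $\sum_{k=2}^{\infty} k^{-2.5} = \zeta(2.5) - 1$, I obtain
\[
  R_n(\GC) \;\leq\; 1 + \zeta(2.5)\, n^{2.5}\,\bigl(2^{-2.5} + \zeta(2.5) - 1\bigr) \;<\; 1 + 0.70\, n^{2.5},
\]
which is strictly less than $n^{2.5}$ for every $n \geq 2$; the case $n = 1$ gives $R_1 = 1$ trivially. The base cases -- the exceptional types $G_2$, $F_4$, $E_6$, $E_7$, $E_8$, together with the low-rank classical types $B_2$, $B_3$, $C_2$, $C_3$, $D_4$, $D_5$ -- are handled directly using L\"ubeck's explicit tables \cite{Lu} for small $n$, combined with the trivial bound $R_n(\GC) \leq n^r$ (from restriction to fundamental $\SL_2$-subgroups, which forces each $a_i \leq n-1$) for larger $n$.

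The main obstacle is the possible appearance of a type $A_k$ factor with $k \geq 2$ inside $\HC_1$: for such a factor Theorem \ref{mainA} yields only $R_n(A_k) \leq n^{3.8}$, and feeding this into the master inequality gives only $R_n(\GC) = O(n^{3.8})$, which is far too weak. Consequently I must treat as base cases exactly those groups whose associated $\HC_1$ contains a type $A_k$ factor with $k \geq 2$; in the classical families this concerns chiefly $D_5$ (where $\HC_1 = A_1 \times A_3$), and among the exceptional types it concerns the cases whose $\HC_1$ has an $A_k$ summand of rank $\geq 2$. Once these base cases are secured, the inductive step closes uniformly for all larger ranks of $B_r$, $C_r$ and $D_r$.
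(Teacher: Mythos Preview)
Your inductive framework via Lemma~\ref{reduction} is essentially the paper's own approach, and the master inequality together with the numerical estimate $1 + 0.70\,n^{2.5} < n^{2.5}$ for $n \geq 2$ is correct. The genuine gap is in your treatment of the base cases.

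You propose to dispatch every base case (all exceptional types, and $B_3$, $C_3$, $D_4$, $D_5$) by ``L\"ubeck's tables for small $n$, combined with the trivial bound $R_n(\GC) \leq n^r$ for larger $n$.'' This does not work: for any rank $r \geq 3$ the trivial bound $n^r$ is \emph{weaker} than the target $n^{2.5}$, so it establishes nothing. L\"ubeck's tables list irreducibles only up to some fixed dimension $B$ (depending on the type but not on $p$), while the total number of restricted irreducibles is $p^r$, unbounded in $p$. Thus for $B < n < p^{r/2.5}$ you have no argument at all. In particular, you cannot verify $R_n(E_8) \leq n^{2.5}$ this way for any prime $p$ with, say, $n$ around $p^3$.

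The paper does \emph{not} treat the exceptional types as base cases by direct computation. It feeds $F_4$, $E_7$, $E_8$, $G_2$ into the same inductive machinery, using the already-established classical bounds for their respective $\HC_1$ (namely $C_3$, $D_6$, $E_7$, $A_1$). The one genuine obstacle is $E_6$, where $\HC_1 \cong A_5$; here the paper invokes Proposition~\ref{A5}, the special result $R_n(\SL_6(\FF)) \leq n^{2.5}$, proved separately with some effort. Your proposal identifies the $A_k$ ($k \geq 2$) factor as the obstacle but never supplies a replacement for Proposition~\ref{A5}. Similarly, for the $D_r$ family the paper does not take $D_4$, $D_5$ as base cases; it starts the induction at $D_2$ and $D_3$, and for the latter proves $R_n(\SL_4(\FF)) \leq n^2$ directly from L\"ubeck's tables for $n \leq 23$ and from inequality~(\ref{bound2}) for $n \geq 24$ --- a computation that genuinely closes, unlike the crude $n^r$ bound.
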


\subsection{Type $C_r$}
We prove by induction on $r$ that $R_n(\GC) \leq n^2$, with the induction base $r=1$ 
being obvious as $R_n(\SL_{2}(\FF)) \leq n$. For the inductive step, label the simple roots
such that $\al_r$ is long and connected to $\al_{r-1}$ by a double edge in $\Gamma$. Notice
that $\al_0 = 2\sum^{r-1}_{i=1}\al_i+\al_r$, and the root $\al_j$ 
distinguished above is just $\al_1$, and so $n_j = 2$ and 
$n_i \geq 1$ for all $i$. Furthermore, $\HC_1$ is of type 
$C_{r-1}$. The statement is obvious when $n = 1$. 
Since the smallest dimension of nontrivial irreducible 
representations of $Sp_4(\FF)$ is $4$, cf. \cite{Lu}, we may assume that $n \geq 4$.

Consider any nontrivial restricted representation 
$V = L(\om)$ with $\dim V \leq n$. Since $n_j = 2$, 
we already know by Lemma \ref{reduction}
that $V$ is completely determined by $m$ and $U_1$, where $0 \leq m \leq p-1$ and clearly 
$$\dim U_1 \leq \frac{\dim V}{\dim U_0} \leq \frac{n}{\dim U_0}.$$
Applying induction hypothesis to $\HC_1$, we see that
the number of possibilities for $U_1$ is at most $(n/\dim U_0)^{2}$. It follows 
by Lemma \ref{reduction}(iii) that
$$R_n(\GC) \leq 1+\sum^{p-1}_{m=1}\left( \frac{n}{m+1} \right)^{2} + 
  \left( \frac{n}{2} \right)^{2} < 1 + n^{2} \cdot \left(\zeta(2) -1 + \frac{1}{4}\right) < n^2$$
since $n \geq 4$ and $\zeta(2) = \pi^2/6$, where 
$\zeta(s)=\sum^{\infty}_{k=1}k^{-s}$ is the Riemann zeta function. 
  
\subsection{Type $B_r$}
We prove by induction on $r \geq 2$ that $R_n(\GC) \leq n^{s}$ with $s = 9/4$.
The induction base $r = 1,2$ is clear since $B_1 \cong A_1$ and $B_2 \cong C_2$. 
For the inductive step when $r \geq 3$, label the simple roots such that $\al_r$ is short
and connected to $\al_{r-1}$ by a double edge in $\Gamma$. Notice
that $\al_0 = \al_1 +2\sum^{r}_{i=2}\al_i$, and the root $\al_j$ 
distinguished above is just $\al_2$; in particular $n_j = 2$ is coprime to $p$
and $n_i \geq 1$ for all $i$. Now
$\HC_1$ is of type $A_1 + B_{r-2}$. Since the 
smallest dimension of nontrivial irreducible 
representations of $\Spin_7(\FF)$ is $7$, cf. \cite{Lu}, we may assume that $n \geq 7$.

Consider any nontrivial restricted $V = L(\om)$ with $\dim V \leq n$. Since $n_j = 2$, 
by Lemma \ref{reduction} we have that $V$ is completely determined by $m$ and 
$U_1 = L(a_1\om_1) \otimes U_2$, where $0 \leq m \leq p-1$, $L(a_1\om_1)$ is the 
irreducible $\SL_2(\FF)$-representation with highest weight $a_1\om_1$, and $U_2$ is
the restricted irreducible $\Spin_{2r-3}(\FF)$-representation with highest weight 
$\sum^{r}_{i=3}a_i\om_i$. Clearly, 
$$\dim U_2 \leq \frac{\dim U_1}{a_1+1}\leq \frac{n}{(a_1+1)\dim U_0}.$$
By Lemma \ref{reduction}(iii), 
$\dim U_0 \geq m+1$ if $1 \leq m \leq p-1$, and $\dim U_0 \geq 2$ if $m = 0$.
Applying induction hypothesis to the factor $B_{r-2}$ of $\HC_1$, we see that
the number of possibilities for $U_2$ is at most $(n/(a_1+1)\dim U_0)^{s}$. 
It follows that
$$\begin{array}{ll}R_n(\GC) & \leq 1+\sum^{p-1}_{a_1 =0}\sum^{p-1}_{m=1}
  \left( \dfrac{n}{(a_1+1)(m+1)} \right)^{s} + 
  \sum^{p-1}_{a_1 =0}\left( \dfrac{n}{2(a_1+1)} \right)^{s} \\ \\
  & < 1 + n^{s} \cdot \left(\zeta(s)(\zeta(s) -1) + 
  \dfrac{\zeta(s)}{2^{s}}\right) < n^s \end{array}$$
since $n \geq 7$ and $s=9/4$. 

\subsection{Type $D_r$}
We prove by induction on $r \geq 2$ that $R_n(\GC) \leq n^{s}$ with $s = 9/4$.
In the case $r = 2$, $\GC \cong \SL_2(\FF) \times \SL_2(\FF)$ and so we have the 
obvious bound $R_n \leq n^2$. We show that $R_n \leq n^2$ also holds in the case $r=3$, 
where $\GC \cong \SL_4(\FF)$. If $n \leq 23$ then the bound can be verified by 
inspecting \cite{Lu}. If $n \geq 24$, then (\ref{bound2}) implies
$$R_n \leq 2(n+3)(1+\log((n+3)/4))^2 < n^2.$$
For the inductive step when $r \geq 4$, label the simple roots such that $\al_{r-2}$ 
is connected to $\al_{r-3}$ and the two end nodes $\al_{r-1}$ and $\al_r$ in $\Gamma$. 
Notice that 
$$\al_0 = \al_1+\al_{r-1}+\al_r + 2\sum^{r-2}_{i=2}\al_i,$$
in particular, $n_i \geq 1$ for all $i$.  
The root $\al_j$ distinguished above is $\al_2$ and so $n_j = 2$, and 
$\HC_1$ is of type $A_1 + D_{r-2}$. Since the 
smallest dimension of nontrivial irreducible 
representations of $\Spin_8(\FF)$ is $8$, cf. \cite{Lu}, we may assume that $n \geq 8$.

Consider any nontrivial restricted $V = L(\om)$ with $\dim V \leq n$. Since $n_j = 2$, 
by Lemma \ref{reduction} we have that $V$ is completely determined by $m$ and 
$U_1 = L(a_1\om_1) \otimes U_2$, where $0 \leq m \leq p-1$, $L(a_1\om_1)$ is the 
irreducible $\SL_2(\FF)$-representation with highest weight $a_1\om_1$, and $U_2$ is
the restricted irreducible $\Spin_{2r-4}(\FF)$-representation with highest weight 
$\sum^{r}_{i=3}a_i\om_i$. Again we have
$$\dim U_2 \leq \frac{\dim U_1}{a_1+1}\leq \frac{n}{(a_1+1)\dim U_0}.$$
Now we can apply the induction hypothesis to the factor $D_{r-2}$ of $\HC_1$ and 
proceed as in the case of type $B_r$. 

\subsection{Exceptional groups}

\subsubsection{Type $E_6$}
We prove that $R_n(\GC) \leq n^{2.5}$. Label the simple roots
such that $\al_4$ is connected to $\al_3$, $\al_5$, and the end node $\al_{2}$ in 
$\Gamma$. Then the root $\al_j$ distinguished above is $\al_2$, and $\HC_1$ is of type 
$A_5$. Notice that 
$$\al_0 = \al_1+2\al_2+2\al_3+3\al_4+2\al_5+\al_6$$ 
so that $n_i\geq 1$ for all $i$ and $n_j =2$.  
Since the smallest dimension of nontrivial irreducible 
representations of $\GC$ is $27$, cf. \cite{Lu}, we may assume that $n \geq 27$.

Consider any nontrivial restricted $V = L(\om)$ with $\dim V \leq n$. Since $n_j = 2$, 
by Lemma \ref{reduction} we have that $V$ is uniquely determined by $m$ and $U_1$, 
where $0 \leq m \leq p-1$ and $\dim U_1 \leq n/(\dim U_0)$. Also,
$\dim U_0 \geq m+1$ if $1 \leq m \leq p-1$ and $\dim U_0 \geq 2$ if $m = 0$
by Lemma \ref{reduction}(iii). 
Applying Proposition \ref{A5} to $\HC_1 \cong \SL_6(\FF)$, we see that
the number of possibilities for $U_1$ is at most $(n/\dim U_0)^{2.5}$. It follows that
$$R_n(\GC) \leq 1+\sum^{p-1}_{m=1}\left( \frac{n}{m+1} \right)^{2.5} + 
  \left( \frac{n}{2} \right)^{2.5} < 
  1 + n^{2.5} \cdot \left(\zeta(2.5) -1 + \frac{1}{2^{2.5}}\right) < n^{2.5}$$
since $n \geq 27$.

\subsubsection{Type $E_7$}
We prove that $R_n(\GC) \leq n^{9/4}$. Label the simple roots
such that $\Delta \setminus \{\al_7\}$ is of type $E_6$ and $\al_1, \ldots ,\al_6$ 
are labeled as in the case $E_6$. Then the root $\al_j$ distinguished above is 
$\al_1$, and $\HC_1$ is of type $D_6$. Notice that 
$$\al_0 = 2\al_1+2\al_2+3\al_3+4\al_4+3\al_5+2\al_6+\al_7$$ 
so that $n_i\geq 1$ for all $i$ and $n_j =2$.  
Since the smallest dimension of nontrivial irreducible 
representations of $\GC$ is $56$, cf. \cite{Lu}, we may assume that $n \geq 56$.

Consider any nontrivial restricted $V = L(\om)$ with $\dim V \leq n$. Since $n_j = 2$, 
by Lemma \ref{reduction} we have that $V$ is uniquely determined by $m$ and $U_1$, 
where $0 \leq m \leq p-1$ and $\dim U_1 \leq n/(\dim U_0)$. 
Applying the proved bound for $\HC_1$ of type $D_6$, we see that
the number of possibilities for $U_1$ is at most $(n/\dim U_0)^{9/4}$. It follows that 
$$R_n(\GC) \leq 1+\sum^{p-1}_{m=1}\left( \frac{n}{m+1} \right)^{9/4} + 
  \left( \frac{n}{2} \right)^{9/4} < 
  1 + n^{9/4} \cdot \left(\zeta(9/4) -1 + \frac{1}{2^{9/4}}\right) < n^{9/4}$$
since $n \geq 56$.

\subsubsection{Type $E_8$}
We prove that $R_n(\GC) \leq n^{9/4}$. Label the simple roots
such that $\Delta \setminus \{\al_8\}$ is of type $E_7$ and $\al_1, \ldots ,\al_7$ 
are labeled as in the case $E_7$. Then the root $\al_j$ distinguished above is 
$\al_8$, and $\HC_1$ is of type $E_7$. Notice that 
$$\al_0 = 2\al_1+3\al_2+4\al_3+6\al_4+5\al_5+4\al_6+3\al_7+2\al_8$$ 
so that $n_i\geq 1$ for all $i$ and $n_j =2$.  
Since the smallest dimension of nontrivial irreducible 
representations of $\GC$ is $248$, cf. \cite{Lu}, we may assume that $n \geq 248$.
Arguing as above and applying the proved bound for $\HC_1$ of type $E_7$, we see that
$$R_n(\GC) \leq 1+\sum^{p-1}_{m=1}\left( \frac{n}{m+1} \right)^{9/4} + 
  \left( \frac{n}{2} \right)^{9/4} < 
  1 + n^{9/4} \cdot \left(\zeta(9/4) -1 + \frac{1}{2^{9/4}}\right) < n^{9/4}$$
since $n \geq 248$.

\subsubsection{Type $F_4$}
We prove that $R_n(\GC) \leq n^{2}$. Label the simple roots
such that $\al_2$ is long and connected to the end node $\al_1$ and also to
the short root $\al_3$ by a double edge in $\Gamma$. Then the root $\al_j$ distinguished 
above is $\al_1$, and $\HC_1$ is of type $C_3$. Notice that 
$$\al_0 = 2\al_1+3\al_2+4\al_3+2\al_4$$ 
so that $n_i\geq 1$ for all $i$ and $n_j =2$.  
Since the smallest dimension of nontrivial irreducible 
representations of $\GC$ is $25$, cf. \cite{Lu}, we may assume that $n \geq 25$.
Arguing as above and applying the proved bound for $\HC_1$ of type $C_3$, we see that
$$R_n(\GC) \leq 1+\sum^{p-1}_{m=1}\left( \frac{n}{m+1} \right)^{2} + 
  \left( \frac{n}{2} \right)^{2} < 
  1 + n^{2} \cdot \left(\zeta(2) -1 + \frac{1}{4}\right) < n^{2}$$
since $n \geq 25$.

Since $R_n(\GC) \leq n^2$ in the case $\GC$ is of type $G_2$, we have completed the proof
of Theorem \ref{main2}.
\hfill $\Box$

\section{Symmetric and alternating groups}

Recall that the $p$-modular irreducible representations of the symmetric group
$\SSS_{r}$ are parametrized by the $p$-regular partitions of $r$ (i.e. the partitions
$\lam = (\lam_1, \ldots ,\lam_s) \vdash r$ with $\lam_1 \geq \ldots \geq \lam_s \geq 1$,
$\sum^{s}_{i=1}\lam_i = r$, and for each $j$ there are at most $p-1$ indices $i$ 
such that $\lam_i = j$). For such a $\lam$ denote the corresponding 
irreducible $\FF\SSS_r$-module 
by $D^{\lam}$. It was shown by James in \cite{J} that  
$\dim D^{\lam} \approx (r^m/m!)\dim D^{\bar{\lam}}$ when $r \to \infty$, where 
$\bar{\lam} := (\lam_2, \ldots ,\lam_s)$; however the resulting lower bound for 
$\dim D^{\lam}$ is ineffective. 

To estimate the (modular) representation growth for 
$\SSS_{r}$ we need the following effective bound, which is also of independent interest.
For $\lam$ of indicated shape, define $m_2(\lam) := \lam_1$. For $p \neq 2$,
consider the sign representation ${\bf {sgn}}$ of $\SSS_r$ over $\FF$. 
Mullineux defined a bijection
$\MB$ on the set of $p$-regular partitions of $r$ and conjectured in \cite{M} that 
$D^{\lam} \otimes {\bf {sgn}} \cong D^{\lam^{\MB}}$. Mullineux's conjecture was  
proved by Ford and Kleshchev in \cite{FK}, and independently by Bessenrodt and Olsson
in \cite{BO}. Denoting the first (largest) part of $\lam^{\MB}$ by 
$(\lam^{\MB})_1$, we define     
$$m_p(\lam):= \max(\lam_1,(\lam^{\MB})_1),$$ 
(that is, the longest ``row'' of $\lam$ and $\lam^{\MB}$).   

\begin{theor}\label{bound3}
{\sl For any $n \geq 5$, any $p \geq 0$, and any $p$-regular partition $\lam$,
$$\dim D^{\lam} \geq 2^{\frac{r-m_p(\lam)}{2}}.$$}
\end{theor}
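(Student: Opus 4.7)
The plan is induction on $r$, exploiting Mullineux symmetry and Kleshchev's modular branching rule. By the Ford--Kleshchev and Bessenrodt--Olsson theorem cited above, $\dim D^\lambda = \dim D^{\lambda^\mathbf{M}}$ and $m_p(\lambda) = m_p(\lambda^\mathbf{M})$; so for odd $p$ we may replace $\lambda$ by $\lambda^\mathbf{M}$ if necessary to arrange $m_p(\lambda) = \lambda_1$, while for $p = 2$ this holds by the definition of $m_2$. Setting $k := r - \lambda_1$, it suffices to prove $\dim D^\lambda \geq 2^{k/2}$.

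Base cases for small $r$ (say $r \leq 7$) can be verified directly from the known tables of $p$-modular dimensions for $\SSS_r$. For the inductive step, the main tool is Kleshchev's modular branching rule
\[
\soc\bigl(D^\lambda\!\downarrow_{\SSS_{r-1}}\bigr) \;=\; \bigoplus_{A\text{ good}}D^{\lambda - A},
\qquad
\dim D^\lambda \;\geq\; \sum_{A\text{ good}} \dim D^{\lambda - A},
\]
with $A$ ranging over the good removable nodes of $\lambda$. By the Ford--Kleshchev compatibility of Mullineux with branching (good nodes of $\lambda$ and $\lambda^\mathbf{M}$ are in bijection, with residues negated), removing a good node $A$ from a row $i > 1$ yields $\mu := \lambda - A$ with $\mu_1 = \lambda_1$, and the corresponding $\mu^\mathbf{M}$ is obtained from $\lambda^\mathbf{M}$ by removing a good node, so $(\mu^\mathbf{M})_1 \leq (\lambda^\mathbf{M})_1 \leq \lambda_1$; hence $m_p(\mu) \leq \lambda_1$. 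When $\lambda$ has at least two good nodes---since each row contains at most one removable node, at least one such good node must lie in a row $i > 1$---summing the contributions of two good-node terms and applying the induction hypothesis gives $\dim D^\lambda \geq 2 \cdot 2^{(k-1)/2} = 2^{(k+1)/2}$, which is more than enough.

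The main obstacle is the Jantzen--Seitz case, in which $\lambda$ has a unique good node and $D^\lambda\!\downarrow_{\SSS_{r-1}} = D^\mu$ is already irreducible; a single branching step then yields no factor of $2$. Here the plan is to iterate: follow the JS-chain $\lambda = \lambda^{(0)}, \lambda^{(1)}, \lambda^{(2)}, \ldots$ of successive good-node removals until a non-JS partition is reached, at which point the previous case applies. The explicit JS combinatorics (consecutive parts satisfying $\lambda_i - \lambda_{i+1} \equiv -1 \pmod p$ with the appropriate endpoint condition, and a parallel $2$-modular description) forces the chain to terminate quickly; by tracking $\mu_1^{(i)}$ and $(\mu^\mathbf{M})^{(i)}_1$ along it---switching between $\lambda^{(i)}$ and $(\lambda^{(i)})^\mathbf{M}$ as needed---one verifies that $m_p$ drops by at least one per JS step, so that the aggregated induction still delivers $\dim D^\lambda \geq 2^{k/2}$. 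The careful accounting within this JS-chain, uniform across $p$, is the technical core of the argument.
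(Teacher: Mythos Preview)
Your inductive framework, use of Kleshchev's branching, and the Mullineux argument showing $m_p(\mu)\le m_p(\lambda)$ for any good-node removal all match the paper. (A small remark: your emphasis on ``row $i>1$'' is unnecessary---the inequality $m_p(\mu)\le m_p(\lambda)$ holds for \emph{every} good node $A$, since $\mu_1\le\lambda_1$ trivially and $(\mu^{\mathbf M})_1\le(\lambda^{\mathbf M})_1$ by Mullineux-compatibility of branching; so you may simply sum any two socle summands.)

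The real divergence, and the genuine gap, is in the Jantzen--Seitz case. You propose to follow the JS-chain and assert that ``$m_p$ drops by at least one per JS step''. That assertion is precisely what is needed for the induction to go through (one JS step costs a factor $\sqrt 2$ otherwise), and you correctly flag it as the technical core---but you give no argument for it. Unpacking the claim: with $m_p(\lambda)=\lambda_1$, you would need the unique good node of a JS partition $\lambda$ to lie in row~$1$ (and, when $\lambda_1=(\lambda^{\mathbf M})_1$, simultaneously the unique good node of $\lambda^{\mathbf M}$ to lie in its row~$1$). For $p=2$ there is no Mullineux to help and the statement becomes: for every $2$-JS partition the unique good node sits in the first row. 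None of this is obvious from the signature description of good nodes or from the JS congruence conditions, and no reference is offered; as written the proof is incomplete exactly at its declared crux.

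The paper sidesteps the whole JS-chain analysis with a short device you may find worth adopting. Instead of iterating one step at a time, restrict \emph{two} steps, to $K=\SSS_{r-2}$. If either $\lambda$ or $\lambda^1:=\lambda-A_1$ has at least two good nodes, the socle already produces two composition factors $D^{\mu^1},D^{\mu^2}$ of $D^\lambda\!\downarrow_K$, each with $m_p(\mu^j)\le m_p(\lambda)$, and
\[
\dim D^\lambda \;\ge\; 2\cdot 2^{\frac{(r-2)-m_p(\lambda)}{2}} \;=\; 2^{\frac{r-m_p(\lambda)}{2}}.
\]
If both $\lambda$ and $\lambda^1$ have a unique good node, then $D^\lambda\!\downarrow_H$ is self-dual indecomposable with simple socle $D^{\lambda^1}$, hence simple head $D^{\lambda^1}$; so either it is simple, or $D^{\lambda^1}$ occurs with multiplicity $\ge 2$ and we are done upon restricting once more. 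But $D^\lambda\!\downarrow_K$ can \emph{never} be simple when $\dim D^\lambda>1$: since $\SSS_r>\SSS_{r-2}\times\SSS_2$, a simple restriction would force $\SSS_2$ to act by scalars (Schur), contradicting faithfulness. This two-step/Schur argument replaces the unproved JS-chain bookkeeping entirely.
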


\begin{proof}
1) We proceed by induction on $r$, with the induction base $r = 5,6$ easily checked using 
\cite{JLPW}. For the induction step, assume $r \geq 7$ and consider any 
$p$-regular partition $\lam \vdash r$. The statement is obvious if 
$m_p(\lam) = r$. So we will assume $m_p(\lam) \leq r-1$, which implies 
that $\dim D^{\lam} \geq 3$. (For, suppose that $\dim D^{\lam} \leq 2$. Then by
the results of \cite{J}
we see that $\dim D^{\lam} = 1$ and, moreover,
either $\lam = (r)$ and so $m_p(\lam) = r$, or 
$p \neq 2$ and $D^{\lam} \cong {\bf {sgn}}$. In the latter case,
$D^{\lam^{\MB}}$ is the trivial representation, whence $\lam^{\MB} = (r)$ and so
$m_p(\lam) = r$ as well.)
We will also consider a chain of 
natural subgroups $K < H < G = \SSS_r$, with $H = \SSS_{r-1}$ and $K = \SSS_{r-2}$.
Our proof will rely on the beautiful result of
Kleshchev describing branching rules for modular representations of $\SSS_r$, cf. 
\cite{K}. In particular, this result says that the Young diagram $Y(\lam)$ of
$\lam$ has (at most $p-1$) {\it good} nodes $A_{1}, \ldots ,A_a$ such that 
$$\soc\left(D^{\lam} \downarrow_{H}\right) = \oplus^{a}_{i=1}D^{\lam^i},$$
where the Young diagram of the ($p$-regular) partition $\lam^i \vdash (n-1)$ 
is obtained from $Y(\lam)$ by removing the node $A_i$ for $1 \leq i \leq a$, and similarly 
for $\soc\left(D^{\lam^i} \downarrow_{K}\right)$. 

2) We claim that $m_p(\lam^i) \leq m_p(\lam)$. This is obvious for $p = 2$, since
node removal does not increase the largest length of the rows of the partition. 
Consider the case $p \neq 2$. Then again we have that the longest row
$(\lam^i)_1$ of $\lam^i$ is at most $\lam_1 \leq m_p(\lam)$. Next, if $\Phi$ denotes
a particular matrix representation of $\SSS_r$ afforded by $D^{\lam}$, then for any 
$g \in \SSS_r$, $g$ acts on $D^{\lam^{\MB}}$ via the matrix ${\bf {sgn}}(g)\Phi(g)$. Since 
$D^{(\lam^i)^{\MB}} \cong D^{\lam^i} \otimes {\bf {sgn}}$, it follows that
$$\soc\left(D^{\lam^{\MB}} \downarrow_{H}\right) = \oplus^{a}_{i=1}D^{(\lam^i)^{\MB}}.$$
By the aforementioned result of Kleshchev applied to $D^{\lam^{\MB}}$, 
$Y((\lam^i)^{\MB})$ is obtained by removing 
a good node $B_i$ from $Y(\lam^{\MB})$. Therefore the longest row
$((\lam^i)^{\MB})_1$ of $(\lam^i)^{\MB}$ is at most $(\lam^{\MB})_1 \leq m_p(\lam)$. Thus
$m_p(\lam^i) \leq m_p(\lam)$ as stated.
 
3) First we assume that the $K$-module $D^{\lam} \downarrow_{K}$ has
at least two composition factors $D^{\mu^1}$ and $D^{\mu^2}$, 
where each $Y(\mu^j)$ is obtained from 
$Y(\lam)$ by removing two good nodes subsequently. By the result of 2) we have 
$m_p(\mu^j) \leq m_p(\lam)$. Applying the 
induction hypothesis to $\mu^j$ we obtain
$$\dim D^{\lam} \geq \dim D^{\mu^1} + \dim D^{\mu^2} \geq 
  2^{\frac{(r-2)-m_p(\mu^1)}{2}} + 2^{\frac{(r-2)-m_p(\mu^2)}{2}} \geq 
  2 \cdot 2^{\frac{(r-2)-m_p(\lam)}{2}} = 2^{\frac{r-m_p(\lam)}{2}},$$
as desired.

Next suppose that either $\lambda$ has at least two good nodes
$A_1$, $A_2$, or $\lambda$ has only one good node $A_1$ but 
$\lam^1$ has at least two good nodes. Then Kleshchev's branching rule tells us 
that $\soc\left(D^{\lam} \downarrow_{K}\right)$ contains a submodule
$D^{\mu^1} \oplus D^{\mu^2}$, where each $Y(\mu^j)$ is obtained from $Y(\lam)$ by 
removing two good nodes, and so we are done.

Now we may assume that $\lam$ has only one good node $A_1$ and $\lam^1$ has only one good 
node $B_1$. In particular, $D^{\lam} \downarrow_{H}$ is an indecomposable module with 
simple socle $D^{\lam^1}$. But this module is self-dual, so its head is also simple and 
isomorphic to $D^{\lam^1}$. Assuming $D^{\lam} \downarrow _{H}$ is not simple, we see
that $D^{\lam} \downarrow _{H}$ has composition factor $D^{\lam^1}$ with multiplicity
$\geq 2$. Hence, $D^{\lam} \downarrow _{K}$ has a composition factor $D^{\mu^1}$ 
with multiplicity $\geq 2$, where $Y(\mu^1)$ is obtained from $Y(\lam)$ by removing 
two good nodes, and so we are again done. In the remaining case, we have that 
$D^{\lam}$ is irreducible over $H$. Applying the same argument to 
$D^{\lam} \downarrow_K$, we are done if $D^{\lam} \downarrow_K$ is not simple. 
We can now complete the induction step by observing that, since 
$G > K \times C$ with $C = \SSS_2 \not\leq Z(G)$ and $\dim D^{\lam} > 1$, 
$D^{\lam} \downarrow_K$ can never be simple.    
\end{proof} 

Theorem \ref{bound3} yields the following sublinear bound for $R_n(\SSS_r)$:

\begin{corol}\label{sym1}
{\sl If $r \geq 5$, $n \geq 2$, and $\Char(\FF) = p \geq 0$, then 
$$R_{n}(\SSS_r) \leq f_{5}(n) := 4(\log_2 n) 
  e^{2\pi\sqrt{(\log_2 n)/3}}.$$}
\end{corol}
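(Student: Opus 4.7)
The plan is to derive the bound directly from Theorem~\ref{bound3} via a partition-counting argument that exploits the Mullineux bijection $\MB$ and the classical estimate~\eqref{pi} on the partition function $p(n)$. Set $L := \log_2 n$ and $T := \lfloor 2L \rfloor$.

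First, suppose $\lam$ is a $p$-regular partition of $r$ with $\dim D^{\lam} \leq n$. Theorem~\ref{bound3} gives $2^{(r - m_p(\lam))/2} \leq n$, so $r - m_p(\lam) \leq 2L$; since the left-hand side is a non-negative integer, $r - m_p(\lam) \leq T$. Because $m_p(\lam) = \max(\lam_1, (\lam^{\MB})_1)$, each such $\lam$ satisfies $\lam_1 \geq r - T$ or $(\lam^{\MB})_1 \geq r - T$. Let $A$ be the set of $p$-regular $\mu \vdash r$ with $\mu_1 \geq r - T$ and $B$ the analogous set with $(\mu^{\MB})_1 \geq r - T$. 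Since $\MB$ is a bijection on $p$-regular partitions of $r$ identifying $B$ with $A$, we have $|B| = |A|$, so
$$R_n(\SSS_r) \leq |A| + |B| \leq 2|A|.$$

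Second, a partition $\mu \vdash r$ with $\mu_1 = r - s$ is determined by the partition $(\mu_2, \mu_3, \ldots)$ of $s$, whence $|A| \leq \sum_{s=0}^{T} p(s)$. Plugging in $p(s) < e^{\pi\sqrt{2s/3}}$ from~\eqref{pi} termwise yields
$$R_n(\SSS_r) < 2(T+1)\,e^{\pi\sqrt{2T/3}} \leq 2(2L+1)\,e^{2\pi\sqrt{L/3}},$$
using $T \leq 2L$ in the exponent.

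The main obstacle I anticipate is book-keeping the constant $4$ in $f_5(n)$: naively $2(2L+1)$ exceeds $4L$ by $2$, so the literal inequality $2(2L+1) \leq 4L$ fails. I would resolve this either by sharpening the bound on $\sum_{s=0}^T p(s)$ using the Hardy--Ramanujan asymptotic $p(s) \sim (4s\sqrt{3})^{-1} e^{\pi\sqrt{2s/3}}$ --- which, when summed against the quasi-geometric growth of the exponential, gives $\sum_{s=0}^T p(s) = O(\sqrt{T}\,e^{\pi\sqrt{2T/3}})$ and replaces the prefactor $2L+1$ by $O(\sqrt L)$ --- or by verifying the small-$n$ range by hand and using the remaining slack in the exponential factor (already $e^{2\pi\sqrt{L/3}} > 37$ once $L \geq 1$) to absorb the constant loss of $2$. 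Either route transforms the crude bound above into the stated $f_5(n)$.
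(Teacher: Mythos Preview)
Your approach is essentially identical to the paper's: apply Theorem~\ref{bound3}, use the Mullineux bijection to reduce to counting partitions $\mu\vdash r$ with $\mu_1\ge r-T$, and bound this by $\sum_{s=0}^{T}p(s)$ with $T=\lfloor 2\log_2 n\rfloor$. The only divergence is in the final bookkeeping for the constant $4$.

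The paper avoids both of your proposed fixes (Hardy--Ramanujan asymptotics or absorbing slack from the exponential) by a one-line observation you missed: since $n\ge 2$ forces $T\ge 2$, and $p(0)+p(1)=1+1=2=p(2)\le p(T)$, the sum $\sum_{s=0}^{T}p(s)$ has $T+1$ terms but is bounded by $T\cdot p(T)$ (collapse the first two terms into one copy of $p(T)$, then bound each of the remaining $T$ terms by $p(T)$). This gives directly
\[
R_n(\SSS_r)\;\le\;2\,T\,p(T)\;<\;2\,T\,e^{\pi\sqrt{2T/3}}\;\le\;4(\log_2 n)\,e^{2\pi\sqrt{(\log_2 n)/3}},
\]
with no further work required.
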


\begin{proof}
Consider any $p$-regular $\lam \vdash r$ with $\dim D^{\lam} \leq n$. Then, by 
Theorem \ref{bound3}, $n \geq 2^{(r-m_p(\lam))/2}$ and so $m_p(\lam) \geq r-2\log_2 n$. 
Replacing $\lam = (\lam_1, \ldots ,\lam_s)$ by $\lam^{\MB}$ if 
necessary, we may assume that 
$\lam_1 = m_p(\lam)$. Thus $\bar\lam := (\lam_2, \ldots, \lam_s)$ is a partition 
of $r - \lam_1 = r -m_p(\lam) \leq n_0 := \lfloor 2\log_2 n \rfloor$. The number of 
possible $\bar\lam$ is at most $p(r-\lam_1)$. Hence
$$R_n \leq 2(p(0) + p(1) + \cdots + p(n_0)).$$ 
Observe that $p(0) + p(1) = p(2) \leq p(n_0)$ since $n_0 \geq 2$. It follows that 
$R_n \leq 2n_0 \cdot p(n_0)$, and we are done by using the upper bound (\ref{pi})
for $p(n_0)$.    
\end{proof}
 
The function $f_5(n)$, even though sublinear in $n$, still involves large constants
and becomes small only when $n$ is very large. For instance, one can check that
$f_5(n) \leq n$ when $n \geq 10^{13}$ and
$f_5(n) < \sqrt{n}$ when $n \geq 10^{44}$. 
Nevertheless, using Corollary \ref{sym1} we can
prove the following practical bound:

\begin{theor}\label{sym-alt}
{\sl Let $G$ be a universal covering group of $\SSS_r$ or $\AAA_r$ with $r \geq 5$ and let 
$\Char(\FF) = p \geq 0$ as above. Then $$R_n(G) \leq n^{2.5}.$$}
\end{theor}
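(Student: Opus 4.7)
The plan is to decompose $R_n(G)$ according to whether an irreducible $\FF G$-module factors through the quotient $\bar G \in \{\SSS_r,\AAA_r\}$ or is faithful on the central kernel of $G \to \bar G$, writing $R_n(G) = R_n^{\mathrm{ord}}(G) + R_n^{\mathrm{spin}}(G)$. The ordinary part reduces to bounding $R_n(\SSS_r)$ and $R_n(\AAA_r)$ directly; the spin part is handled separately via a lower bound on the minimum dimension of any faithful representation of the cover.

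First I would dispose of the ordinary part. Corollary \ref{sym1} gives $R_n(\SSS_r) \leq f_5(n)$, and since $f_5$ grows like $e^{O(\sqrt{\log n})}\log n$, an elementary calculation yields $f_5(n) \leq n^{2.5}$ for every $n \geq N_0$ with $N_0$ a small explicit constant. For $n < N_0$ I would split into two subcases: if $r$ is bounded (say $r \leq r_0$ for a suitable threshold $r_0$), read the bound off the $p$-modular character tables in \cite{JLPW} together with L\"ubeck's online tables; if $r > r_0$, reuse the partition-counting argument from the proof of Corollary \ref{sym1} together with Theorem \ref{bound3} to see that only $p$-regular $\lam \vdash r$ with $r - m_p(\lam) \leq 2\log_2 N_0$ yield $\dim D^{\lam} \leq N_0$, so $R_n(\SSS_r)$ is bounded by a constant depending only on $N_0$, hence by $n^{2.5}$ for $n \geq 2$. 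For $\AAA_r$, Clifford theory for the index-$2$ subgroup $\AAA_r \triangleleft \SSS_r$ yields
$$R_n(\AAA_r) \leq R_n(\SSS_r) + 2R_{2n}(\SSS_r) \leq 3 f_5(2n),$$
and the same large-$n$ comparison plus small-$n$ inspection finishes the ordinary part.

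The main obstacle is the spin part, for the double covers $\HS_r, \HA_r$ (plus the exceptional $6$-fold covers of $\AAA_6, \AAA_7$). When $p = 2$ these covers split on representations and there is nothing to do. When $p \neq 2$ I would invoke the classical lower bound $\dim V \geq 2^{\lfloor (r-1)/2\rfloor}$ on the dimension of any faithful irreducible $\FF\HS_r$-module $V$ (the basic spin representation in characteristic zero, with analogous estimates in odd characteristic supplied by the work of Wales and of Kleshchev and Kleshchev--Tiep). This forces $r \leq 2\log_2 n + O(1)$ whenever a spin irreducible of dimension $\leq n$ exists. Since the irreducible faithful $\FF\HS_r$-modules are parametrized by $\bar p$-restricted strict partitions of $r$, their total number is at most $p(r)$, whence
$$R_n^{\mathrm{spin}}(\HS_r) \leq p(r) \leq e^{\pi\sqrt{2r/3}} \leq e^{O(\sqrt{\log n})},$$
which is negligible against $n^{2.5}$. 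The exceptional $6$-fold covers of $\AAA_{6,7}$ are dispatched by direct inspection of their modular character data. Combining the three contributions yields $R_n(G) \leq n^{2.5}$; the principal difficulty lies in assembling the correct characteristic-$p$ basic spin lower bound, and in pinning $N_0$ and $r_0$ small enough that the residual small-range check is feasible.
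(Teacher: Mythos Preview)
Your overall architecture---split off the spin representations via the basic-spin lower bound, bound $R_n(\SSS_r)$ by $f_5(n)$, and pass to $\AAA_r$ by Clifford theory---is exactly the paper's approach, just organised slightly differently (the paper makes a single dichotomy on $n$ versus $2^{\lfloor (r-3)/2\rfloor}$, while you separate ordinary and spin contributions throughout). The spin part is fine and matches \cite{KT}.

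There is, however, a genuine gap in the small-$n$, large-$r$ regime for $\SSS_r$. Your claim that the partition-counting argument of Corollary~\ref{sym1} bounds $R_n(\SSS_r)$ by a constant depending only on $N_0$, ``hence by $n^{2.5}$ for $n\ge 2$'', does not follow: that constant is roughly $2\sum_{k\le 2\log_2 N_0}p(k)$, which for $N_0$ of size a few hundred is already in the thousands, far above $2^{2.5}\approx 5.66$. Even refining to $2\log_2 n$ instead of $2\log_2 N_0$ does not help at the bottom end: for $n=2$ one gets $2(p(0)+p(1)+p(2))=8>2^{2.5}$. Theorem~\ref{bound3} is simply too weak here. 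The paper closes this hole by invoking James's classification \cite{J}: for $r\ge 13$ and $n<(r^2-5r+2)/2$ one has $R_n(\SSS_r)\le 4$. This extra ingredient (or an equivalent one) is essential; no choice of $N_0$ and $r_0$ alone will make your residual check succeed. (Incidentally, L\"ubeck's tables concern groups of Lie type and are of no use for $\SSS_r$.)

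A second, related issue is the passage to $\AAA_r$. The Clifford inequality $R_n(\AAA_r)\le R_n(\SSS_r)+2R_{2n}(\SSS_r)$ only yields $R_n(\AAA_r)\le n^{2.5}$ if the $\SSS_r$ bound comes with a constant at most $(1+2\cdot 2^{2.5})^{-1}\approx 1/12.31$. The paper therefore proves the sharper statement $R_n(\SSS_r)\le n^{2.5}/12.32$ for $r\ge 13$, using James's result to handle the range where $f_5$ is not yet small enough; your bound $3f_5(2n)$ again leaves the small-$n$ range uncovered.
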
 

\begin{proof}
1) Without loss we may assume that $n \geq 2$. The cases $5 \leq r \leq 12$ can be 
checked directly using \cite{JLPW}, so we will assume that $r \geq 13$. 
Let $d$ be the smallest dimension of 
nontrivial irreducible projective $p$-modular representations of $\AAA_r$. Clearly, 
$R_n(G) \leq 2 \leq n$ if $2 \leq n < d$, so we may assume that $n \geq d$. 
By \cite{J} and \cite{KT}, $d \geq r-2 \geq 11$; in particular, $n \geq 11$.

Consider the case $n \geq 2^{\lfloor (r-3)/2 \rfloor}$. Note that 
$R_n(G) \leq |\Irr(G)| \leq 4p(r)$. Since $r \geq 13$, one can check 
that $4p(r) < n^{2.5}$ (using (\ref{pi}) for $r \geq 19$ and directly for 
$13 \leq r \leq 18$). Thus we may assume that $n <  2^{\lfloor (r-3)/2 \rfloor}$.
By \cite[Theorem A]{KT}, this condition implies that any irreducible representation
of $G$ of dimension at most $n$ is in fact a representation of $G/Z(G)$. So in what
follows we may assume that $G \in \{\SSS_r,\AAA_r\}$.  

\smallskip
2) Now consider the case $G = \SSS_r$ with $r \geq 13$, $n \geq 11$. Claim that 
$$R_n(G) \leq b(n) := \frac{n^{2.5}}{12.32}.$$ 
By Corollary \ref{sym1}, $R_n(G) \leq f_5(n)$. If $n \geq 1503$, then one  
can check that $f_5(n) < b(n)$, and so we are done. On the other hand,
if $n < (r^2-5r+2)/2$, then $R_n(G) \leq 4$ by \cite{J}, and we are done again.
So we will assume that $n \geq (r^2-5r+2)/2$; in particular, $n \geq 53$. 
Now if $1503 > n \geq 677$, then
$r \leq 60$ and so
$$R_n(G) \leq p(60) = 966,467 < b(677) \leq b(n).$$
If $677 > n \geq 172$, then $r \leq 39$ and so
$$R_n(G) \leq p(39) = 31,185 < b(172) \leq b(n).$$
If $172 > n \geq 53$, then $r \leq 21$ and so
$$R_n(G) \leq p(21) = 792 < b(53) \leq b(n).$$

\smallskip
3) Finally, we consider the case $S = \AAA_r$ with $r \geq 13$. Consider any
$V \in \IBR_p(S)$ with $\dim V \leq n$. If $V$ extends to $G = \SSS_r$, then $V$ can
be obtained by restricting an irreducible $\FF G$-representation of degree $\leq n$ 
to $S$. On the other hand, if $V$ does not extend to $G$, then $V$ is one of the two
irreducible constituents of the restriction to $S$ of an irreducible 
$\FF G$-representation of degree $\leq 2n$. It follows by the main result of 2) that 
$$R_n(S) \leq R_n(G) + 2R_{2n}(G) \leq b(n) + 2b(2n) 
  = \frac{n^{2.5}}{12.32} + \frac{2(2n)^{2.5}}{12.32} < n^{2.5}.$$     
\end{proof}

Now Theorem \ref{main1} immediately follows from Theorems \ref{mainA}, \ref{mainA2}, 
\ref{main2}, and \ref{sym-alt}.

\section{Maximal Subgroups} \label{maximal}

In this section we will get upper bounds on the number of conjugacy classes of maximal
subgroups of finite almost simple groups.

We first recall two  results of Liebeck, Martin and Shalev \cite{LMS}.  

\begin{lemma} \label{altmax} 
{\rm \cite[Cor. 5.3]{LMS}}  
{\sl Let $G=\SSS_n$ or $\AAA_n$.   The number of conjugacy classes
of maximal subgroups of $G$ is $n^{1 + o(1)}$.}
\end{lemma}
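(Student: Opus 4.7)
The plan is to apply the O'Nan--Scott theorem to stratify $\mathcal{M}(G)$ into three families of maximal subgroups: intransitive, transitive imprimitive, and primitive (together, when $G = \SSS_n$, with the single class $\AAA_n$). I would then bound each stratum separately and verify that only the intransitive one produces a linear contribution.

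For the intransitive stratum, the conjugacy classes are represented by Young subgroups $\SSS_k \times \SSS_{n-k}$ (intersected with $\AAA_n$ if $G = \AAA_n$) for $1 \leq k < n/2$, giving at most $\lfloor (n-1)/2 \rfloor$ classes; this is the source of the main term. For the transitive imprimitive stratum, conjugacy classes are parametrized by nontrivial divisors $1 < k < n$ of $n$ (the blocks systems giving rise to $\SSS_k \wr \SSS_{n/k}$), contributing at most $d(n) = n^{o(1)}$ classes by the standard bound on the divisor function.

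The main obstacle is the primitive stratum, where I would invoke CFSG-based counts. The O'Nan--Scott classification further subdivides this case into affine, diagonal, product-action, twisted wreath, and almost simple types, and in each subtype the degree $n$ forces strong structural restrictions. The affine type contributes at most one class per prime-power factorization $n = p^a$; the diagonal, product-action, and twisted wreath types reduce, in a bounded way, to data involving nonabelian simple groups of much smaller order. The almost simple subtype amounts to counting pairs $(S,H)$, with $S$ a nonabelian simple group embedded socle-like in $\SSS_n$ and $H$ a core-free maximal subgroup of $\Aut(S)$-stabilized shape with $[S:H] = n$. Combined CFSG results on the number of isomorphism types of simple groups of a given order and on the number of maximal subgroups of a simple group of a given index give a bound of the form $n^{o(1)}$ (in fact polylogarithmic) for this entire stratum.

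Summing the three contributions yields $m(G) \leq \lfloor (n-1)/2 \rfloor + d(n) + n^{o(1)} = n^{1+o(1)}$, as claimed. The hard part, of course, is the primitive almost simple subtype, where one genuinely needs the classification of finite simple groups and the sharp bounds on their maximal subgroup lattices; the other strata are essentially combinatorial.
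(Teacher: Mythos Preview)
The paper does not give its own proof of this lemma: it is quoted directly from \cite[Cor.~5.3]{LMS} as one of two results ``recalled'' from Liebeck, Martin, and Shalev, with no argument supplied. So there is no in-paper proof to compare your proposal against.

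That said, your sketch is essentially the strategy of the original \cite{LMS} argument: stratify via O'Nan--Scott into intransitive, imprimitive, and primitive maximal subgroups; observe that the first family contributes $\Theta(n)$ classes and the second at most $d(n)=n^{o(1)}$; and then invoke CFSG-based work to show the primitive family contributes $n^{o(1)}$ classes. Two small remarks. First, your write-up only makes the upper bound explicit; for the full asymptotic $n^{1+o(1)}$ you should also note that the intransitive family already supplies $\lfloor (n-1)/2\rfloor \ge n^{1+o(1)}$ classes (these Young subgroups are genuinely maximal, and for $\AAA_n$ one loses at most a bounded number of exceptional $k$). Second, the heart of the matter---bounding the primitive almost simple stratum by $n^{o(1)}$---is exactly the content of \cite[Cor.~5.3]{LMS}, so your proposal is really a reduction to that result rather than an independent proof of it; you would need to cite or reproduce the CFSG-dependent bounds on the number of simple groups of given order and on maximal subgroups of given index to make the argument self-contained.
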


\begin{lemma} \label{chevmax1}  
{\rm \cite[Theorem 1.3]{LMS}}   
{\sl There exists a function $c(r)$ and an absolute
constant $d$ such that   if $G$ is an almost simple group with
socle of a finite simple group of Lie type of rank $r$, then the number of conjugacy classes
of maximal subgroups is at most $c(r) + dr \log\log q$.}  
\end{lemma}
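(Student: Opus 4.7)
The plan is to apply the Aschbacher--Liebeck--Seitz classification of maximal subgroups (Aschbacher's theorem for classical socles, its Liebeck--Seitz analogue for exceptional socles), which partitions $\mathcal{M}(G)$ into the finitely many geometric classes $\mathcal{C}_1,\ldots,\mathcal{C}_8$ together with the class $\mathcal{S}$ of almost simple subgroups acting irreducibly on the natural module. I would then bound the number of conjugacy classes in each Aschbacher class separately, showing that all but one depends only on the rank $r$.

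For the geometric classes $\mathcal{C}_i$ with $i\ne 5$, the count is bounded purely in terms of $r$: parabolic subgroups correspond to proper subsets of the Dynkin diagram, imprimitive and tensor-product subgroups correspond to factorizations of the natural module's dimension, and extension-field, symplectic-type, and classical-type subgroups are enumerated by discrete data attached to the root system (together with outer-automorphism orbit counts). Summing these yields a contribution that I would absorb into a single function $c_0(r)$.

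The class $\mathcal{C}_5$ of subfield subgroups is the only family whose count genuinely depends on $q$. Writing $q=p^f$, a maximal subfield subgroup corresponds to a maximal proper subfield of $\mathbb{F}_q$, equivalently to a prime divisor of $f$; the number of such primes is $\omega(f)\le \log_2 f = O(\log\log q)$. For each prime and each socle type, at most $O(r)$ conjugacy classes of subfield subgroups arise (accounting for diagonal and outer twists), so $\mathcal{C}_5$ contributes $O(r\log\log q)$, producing the $dr\log\log q$ term in the statement.

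The class $\mathcal{S}$ is the hard part. One must bound, independently of $q$, the number of $G$-conjugacy classes of almost simple $H\le G$ acting irreducibly on the natural module. My approach would be to combine Liebeck's theorem bounding $|H|$ by a polynomial in the dimension of the natural module with a representation-growth estimate controlling how many irreducible embeddings $H\hookrightarrow G$ of fixed dimension exist up to conjugacy. The main obstacle, and the reason the absolute constant $c(r)$ is so large here (rather than linear in $r$), is precisely the need for sharp dimension-versus-multiplicity bounds for modular representations of finite quasisimple groups---the very sort of estimates that Theorem \ref{main1} is designed to furnish, and which the authors will then use in the refinement of this bound leading to Theorem \ref{main3}.
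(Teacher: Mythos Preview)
The paper does not give a proof of this lemma at all: it is simply quoted as \cite[Theorem 1.3]{LMS}, one of two results the authors ``first recall'' from Liebeck--Martin--Shalev before proving their own refinement. So there is no proof in the paper to compare your proposal against.

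That said, your sketch is essentially the skeleton of the argument the paper \emph{does} give for the sharper Theorem~\ref{classicalmax} (which improves $c(r)$ to a polynomial in $r$): Aschbacher's theorem, an $O(r\log r + r\log\log q)$ bound for the geometric families with the $\log\log q$ coming from $\mathcal{C}_5$, and then a case analysis of $\mathcal{S}$ by socle type. One correction worth noting: in the paper's treatment the class $\mathcal{S}$ is not bounded purely in terms of $r$; the subcase $\mathcal{S}3$ (same-characteristic Lie-type socle, non-restricted representation) contributes $O(r\log\log q)$ as well, since Steinberg's tensor product theorem brings in Frobenius twists indexed by divisors of the field degree. So the $dr\log\log q$ term absorbs both $\mathcal{C}_5$ and part of $\mathcal{S}$, not $\mathcal{C}_5$ alone.
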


The major contribution of the $dr \log\log q$ factors comes from subfield groups.  

We use our results on the number of restricted representations of dimension at most $n$ together
with the results of Fulman and Guralnick \cite[\S7]{FG} to show that we may take $c(r)$ to
be a polynomial in $r$.   We may assume that $r$ is sufficiently large and so it suffices to consider
the classical Chevalley groups.     So we prove the following result which then implies Theorem \ref{main3}. 
Theorem \ref{main4} follows from Theorem \ref{main3} and Lemma \ref{altmax}.  

\begin{theorem}  \label{classicalmax}  
{\sl There exist absolute 
constants $a$ and $d$ such that if  $G$ be an almost simple   group with socle a classical
group of rank $r$ over a field $\FF_q$,  then the number of conjugacy classes of maximal subgroups is at most 
$ar^{6} + dr \log\log q$.} 
\end{theorem}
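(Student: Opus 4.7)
The plan is to follow Aschbacher's classification of maximal subgroups of finite classical groups, organized as in the framework of Fulman--Guralnick \cite[\S7]{FG}, and to replace their exponential bound $O(p^{r/3})$ on the contribution of defining-characteristic almost simple subgroups by the polynomial bound furnished by Theorem~\ref{main1}.

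First I would invoke Aschbacher's theorem to partition the maximal subgroups of $G$ into the eight geometric classes $\mathcal{C}_1,\dots,\mathcal{C}_8$ together with the almost simple irreducible class $\mathcal{S}$. For the geometric classes, standard parameterizations (as carried out in \cite{FG}) show that the number of $G$-conjugacy classes in $\mathcal{C}_i$ is polynomial in $r$ for $i\ne 5$ (indeed $O(r^2)$ comfortably suffices for each), while $\mathcal{C}_5$ (subfield subgroups) produces $O(r\log\log q)$ classes, since the lattice of subfields of $\FF_q$ has at most $O(\log\log q)$ maximal elements. This already accounts for the second summand $dr\log\log q$ of the bound.

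The substantive work is to bound the number of $G$-conjugacy classes in $\mathcal{S}$. Each $M\in\mathcal{S}$ is determined, up to $G$-conjugacy, by the isomorphism type of its socle $S$ together with the equivalence class (modulo field and graph automorphisms) of an absolutely irreducible representation of a cover of $S$ into $\mathrm{GL}(V)$, where $V$ is the natural module and $n:=\dim V\le 2r+2$. I would split into subcases according to the type of $S$. If $S$ is alternating, Theorem~\ref{sym-alt} bounds the number of modular representations of a cover of $\AAA_k$ or $\SSS_k$ of degree $\le n$ by $n^{2.5}$; and since $\AAA_k$ has no nontrivial representation of dimension less than $k-2$, the relevant indices satisfy $k=O(r)$, so this subcase contributes $O(r^{3.5})$. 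If $S$ is of Lie type in a characteristic different from $p$, the Landazuri--Seitz--Zalesski lower bounds \cite{LaSe,SZ} force the rank of $S$ to be $O(\log r)$, and a routine count then bounds the contribution polynomially in $r$. Sporadic socles contribute only $O(1)$.

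The main obstacle is the natural-characteristic subcase of $\mathcal{S}$, where $S$ is of Lie type in characteristic $p$. By Steinberg's tensor product theorem, every absolutely irreducible $\bar{\FF}_p$-representation of $S$ is a tensor product of Frobenius twists of restricted representations of the ambient simple algebraic group $\mathcal{G}$; and since $\mathcal{G}$ must embed into $\mathrm{GL}(V)$, its rank is at most $r$, leaving $O(r)$ possibilities for the Dynkin type of $\mathcal{G}$ (including its twisted Steinberg forms). For each such $\mathcal{G}$, Theorem~\ref{main1}(i) bounds the number of restricted representations of degree $\le n$ by $n^{3.8}=O(r^{3.8})$. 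The subtlety is that a representation of the finite group $S=\mathcal{G}^{F}$ of dimension $\le n$ is a tensor product $\bigotimes_{i}L(\lam_i)^{[i]}$ with $\prod_i\dim L(\lam_i)\le n$; but since at most $\log_2 n = O(\log r)$ of the tensor factors can be nontrivial, the number of such tuples is bounded by $R_n(\mathcal{G})\cdot (\log q)^{O(\log r)}$, which is absorbed by $O(r^{4+o(1)})$ after one accounts for the $\FF_q$-rationality constraint. Collecting this with field, diagonal and graph automorphism orbits of $G$ and summing over Dynkin types of $\mathcal{G}$ gives a contribution $O(r^{5+o(1)})$ from $\mathcal{S}$, comfortably within $ar^{6}$; the exponent $6$ is chosen conservatively so as to absorb all polynomial factors coming from socle enumeration, cover degrees, and Galois--automorphism orbits without a delicate accounting. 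Adding the geometric contribution then proves the theorem.
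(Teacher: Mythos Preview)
Your overall architecture matches the paper's: Aschbacher's theorem, the geometric classes handled essentially as in \cite{FG,GKS,LPS} (yielding the $dr\log\log q$ term from $\mathcal{C}_5$), and then a case analysis of the class $\mathcal{S}$ by socle type, with Theorem~\ref{main1} supplying the new polynomial bounds for alternating and defining-characteristic socles. The alternating, cross-characteristic and sporadic subcases are fine.

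The gap is in your treatment of the defining-characteristic subcase. You attempt to count all irreducible $\FF_q$-representations of all finite groups $S=\mathcal{G}^{F}$ of dimension at most $n$ directly via Steinberg's tensor product theorem, and you arrive at a bound of the shape $R_n(\mathcal{G})\cdot(\log q)^{O(\log r)}$. That expression genuinely depends on $q$, and the sentence ``absorbed by $O(r^{4+o(1)})$ after one accounts for the $\FF_q$-rationality constraint'' is not an argument: rationality over $\FF_q$ does not by itself force the number of nontrivial Frobenius-twist factors, nor the positions they occupy among the $\log_p q'$ available slots, to be bounded independently of $q$. You also never pin down the defining field $q'$ of $S$; without a constraint on $q'$ you pick up a further sum over powers of $p$ that again depends on $q$.

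The paper avoids both difficulties by two observations you are missing. First, if the field of definition of the module is strictly smaller than that of $V$ then $M$ already lies in a subfield subgroup, so one may assume $q'\in\{q,q^2\}$; this removes the sum over $q'$. Second, the paper separates the non-restricted representations ($\SC3$) from the restricted ones ($\SC4$): the non-restricted case is not re-counted via tensor products but is simply quoted from \cite[\S7]{FG}, where an $O(r\log r)$ bound is already established (essentially because a nontrivial tensor decomposition places $M$ in a geometric class). What remains is exactly the restricted representations of a fixed algebraic group over the correct field, and Theorem~\ref{main1} then gives $O(r^{3.8})$ per socle type, $O(r)$ socle types, and a further factor $O(r)$ for class splitting in type $A$, yielding $O(r^{6})$ with no $q$-dependence. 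Your tensor-product detour is both unnecessary and, as written, does not close.
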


\begin{proof}  
We follow the proof in \cite{FG} except that in one case we use the results of this paper.
By Aschbacher's Theorem on maximal subgroups of classical groups \cite{As}, there
are $8$ ``natural''  families of  maximal subgroups and one further one consisting of maximal embeddings
of almost simple groups  into $G$.

It is well known that the number of conjugacy classes of maximal subgroups 
in the natural families is at most  $8r \log r + r \log\log q$
(see \cite{GKS, LPS}).   Thus, we need only consider maximal embeddings of almost simple groups.  In particular, 
these will correspond to absolutely irreducible representations of these groups.    We need not worry
about sporadic groups since for $r$ sufficiently large there will be no such representations. 

The natural module for $G$ is of dimension $r+1 ,2r$ or $2r+1$.   
Let $\FF_q$ be the field of definition
of $G$ (which is also the field of definition for the natural module aside from the case
$G={\mathrm {SU}}_{r+1}(q)$ where the field of definition for the module is $\FF_{q^2}$). 

As in \cite{FG}, we subdivide these maximal subgroups $M$ into four cases:
\begin{enumerate}
\item[$\SC1$]  the socle of $M$ is an alternating group;
\item[$\SC2$]  the socle of $M$ is a group of Lie type in characteristic different from $G$;
\item[$\SC3$]  the socle of $M$ is a group of Lie type in the same characteristic as $G$
and the representation is not restricted; or
\item[$\SC4$]  the socle of $M$ is a group of Lie type in the same characteristic as $G$
and the representation is  restricted.
\end{enumerate}

The cases $\SC2$ and $\SC3$ were already finished in \cite{FG} where upper bounds of $O(r^3)$
and  $O(r \log r)$ were established.   

First consider $\SC1$.   By Theorem \ref{main1}, the number of 
irreducible projective representations of 
dimension at most $2r+1$ for $\AAA_d$ is at most $(2r+1)^{2.5}$.  
The number of non-isomorphic alternating
groups $\AAA_d$ that have an irreducible projective representation of dimension $n$ is 
at most $n$ (since $5 \leq d \leq n+4$).   Thus, the total number of
projective irreducible representations of alternating groups is at most $ar^{3.5}$ for some constant $a$.
In the full group of isometries of the given classical groups, the (quasi)-equivalence class determines
a unique conjugacy class.  In order to account for this class splitting further, we need to multiply
this by $r$ (only for groups of type $A$ -- for the other groups, there is a small absolute constant).  Thus,
the total number of conjugacy classes of maximal subgroups in this case is at most $ar^{4.5}$.

Finally, consider $\SC4$.   So the socle of $M$ is a Chevalley group $H_s(q')$,
where $s$ is the rank and
$\FF_{q'}$ is the field of definition.   The field of definition for the representation must be the same as for
the field of definition of the natural module for $G$ (or $M$ would be contained in a subfield group).
Thus, there are at most two choices for $q'$.   Clearly, $s \le r$.   Thus, there are most $9r$ possible
choices for the socle of $M$. 
(One can improve this considerably; if $s$ is large (say $s > r^{1/2}$),
then  one can enumerate all the representations \cite{Lu}).   Thus, combining this with Theorem \ref{main1}
and arguing as in the previous paragraph, we see that the total number of conjugacy classes of maximal subgroups 
in this family is at most $ar^6$.
\end{proof}

\end{document}